\numberwithin{equation}{section}
\newcommand{\Q}{\mathbb{Q}}
\newcommand{\F}{\mathbb{F}}
\newcommand{\R}{\mathbb{R}}
\newcommand{\C}{\mathbb{C}}
\newcommand{\N}{\mathbb{N}}
\newcommand{\Z}{\mathbb{Z}}
\newtheorem{thm}{Theorem}[section]
\newtheorem{lem}{Lemma}[section]
\newtheorem*{heuristic}{Heuristic}
\newtheorem{con}{Conjecture}[section]
\renewcommand{\mod}[1]{\hspace{-2.9mm}\pmod{#1}}
\newcommand{\x}{{\bf x}}
\newcommand{\y}{{\bf y}}
\newcommand{\z}{{\bf z}}
\newcommand{\rom}{\mathrm}
\newcommand{\bfP}{\mathbb{P}}
\newcommand{\A}{\mathbb{A}}
\newcommand{\ma}{\mathbf}
\newcommand{\ben}{\begin{enumerate}}
\newcommand{\een}{\end{enumerate}}
\newcommand{\eit}{\begin{itemize}}
\newcommand{\ve}{\varepsilon}
\newcommand{\mcal}{\mathcal}
\newcommand{\lab}{\label}
\newcommand{\al}{\alpha}
\newcommand{\D}{\Delta}
\newcommand{\be}{\beta}
\newcommand{\colt}[2]{\genfrac{}{}{0pt}{1}{#1}{#2}}
\renewcommand{\d}{\mathrm{d}}
\renewcommand{\leq}{\leqslant}
\renewcommand{\geq}{\geqslant}
\newcommand{\cQ}{\overline{\mathbb{Q}}}
\newcommand\nub{N_{U}(B)}
\newcommand{\Esix}{{\mathbf E}_6}
\newcommand{\Dfour}{{\mathbf D}_4}
\newcommand{\Dfive}{{\mathbf D}_5}
\newcommand{\tS}{{\widetilde S}}
\DeclareMathOperator{\Res}{Res}
\DeclareMathOperator{\Div}{Div}
\DeclareMathOperator{\PDiv}{PDiv}
\DeclareMathOperator{\meas}{meas}
\DeclareMathOperator{\NS}{NS}
\DeclareMathOperator{\ord}{ord}
\DeclareMathOperator{\Pic}{Pic}
\DeclareMathOperator{\hcf}{gcd}
\DeclareMathOperator{\Gal}{Gal}
\DeclareMathOperator{\err}{error}
\DeclareMathOperator{\eff}{\Lambda_{\mathrm{eff}}}
\newtheorem{ex}{Exercise}
\theoremstyle{definition}
\newtheorem*{ack}{Acknowledgements}
\begin{document}

\title{The Manin conjecture in dimension $2$}
\author{T.D. Browning}
\address{School of Mathematics, University of Bristol, Bristol BS8 1TW}
\email{t.d.browning@bristol.ac.uk}

\subjclass[2000]{11G35 (14G05, 14G10)}

\date{\today}

\maketitle

\begin{center}
\includegraphics[scale=0.57]{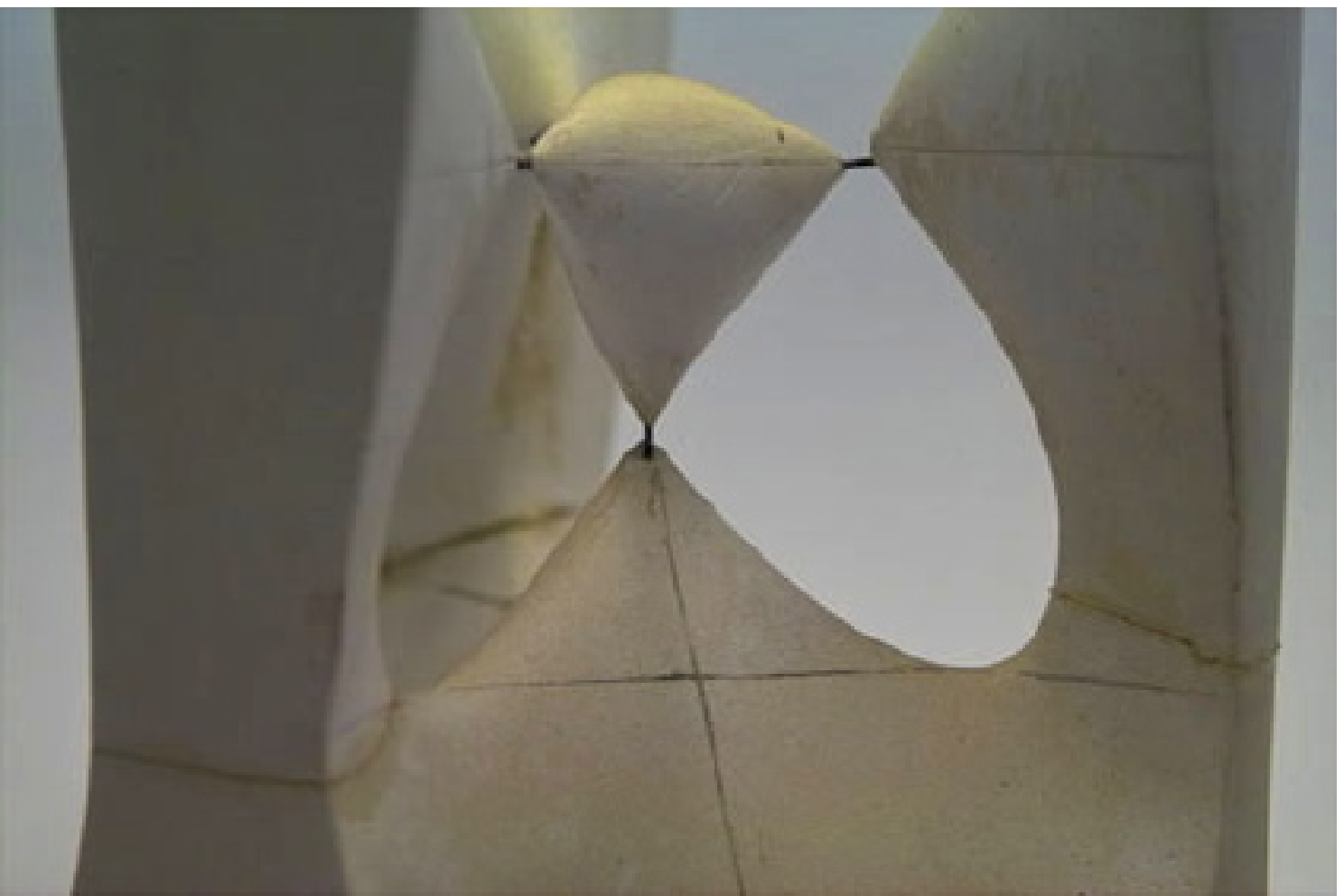}
\end{center}

\tableofcontents

\section{Introduction}\label{s:intro}

The study  of  integer solutions to Diophantine equations is a topic
that is almost as old as mathematics itself. Since its inception at the hands of 
Diophantus of Alexandria in 250 A.D., it has been 
found to relate to virtually every mathematical field. 
The purpose of these lecture notes is to focus attention upon an
aspect of Diophantine equations that has only crystallised within the last
few decades, and which exhibits a fascinating interplay
between the subjects of analytic number theory and algebraic geometry.

Suppose that we are given a polynomial $f\in \Z[x_1,\ldots,x_n]$, and
write 
$$
S_f:=\{\x=(x_1,\ldots,x_n)\in\Z^n\setminus\{\ma{0}\}: f(\x)=0\}
$$ 
for the corresponding locus of non-zero solutions.
There are a number of basic questions that can be asked about the
set $S_f$. When is $S_f$ non-empty? How large is $S_f$ when it is non-empty? 
When $S_f$ is infinite can we describe the set in some way?
A lot of the work to date has been driven by trying to understand the
situation for equations in only $n=2$ or $3$ variables.  The last $50$ years in
particular has delivered a remarkable level of understanding
concerning the arithmetic of curves.  In stark contrast to
this, the situation for equations in $4$ or more
variables remains a relatively untamed frontier, with only a
scattering of results available.

We will restrict attention to the study of Diophantine equations $f=0$ 
for which the corresponding zero set $S_f$ is infinite. The
description that we will aim for is quantative in 
nature, the main goal being to understand how the counting function
\begin{equation}
  \label{eq:count'}
N(f;B):=\#\{\x\in S_f: |\x| \leq B\}
\end{equation}
behaves, as $B \rightarrow \infty$. Here, as throughout these lecture notes,
$|\ma{z}|$ denotes the norm $\max_{1\leq i\leq n}|z_i|$ for any $\ma{z}\in
\R^n$.  Aside from being intrinsically interesting in their own right,
as we will see shortly, the study of functions like $ N(f;B)$ is
often an effective means of determining whether or not the equation
$f=0$ has any non-trivial integer solutions at all. 
In many applications of the Hardy--Littlewood circle method, for
example, one is able to prove that $S_f$ is non-empty by showing that $N(f;B)>0$ for large enough values of $B$.
In fact the method usually carries with it a proof of the fact that $S_f$ is
infinite. In the context of the circle method at least, it is useful
to have a general idea of which polynomials $f$ might have an infinite 
zero locus $S_f$.

Suppose that $f\in\Z[x_1,\ldots,x_n]$ has degree $d\geq
1$.  Then for the vectors $\x\in\Z^n$ counted by $N(f;B)$, the values
of $f(\x)$ will all be of order $B^d$. In fact a
positive proportion of them will have exact order $B^d$.  Thus
the probability that a randomly chosen value of $f(\x)$ should
vanish might be expected to be of order $1/B^{d}$.  Since the number of
$\x$ to be considered has order $B^n$, this leads us to the following
general expectation.

\begin{heuristic}
When $n\geq d$ we have
\begin{equation}
  \label{eq:heuristic}
B^{n-d}\ll N(f;B)\ll B^{n-d}.
\end{equation}
\end{heuristic}

As a crude first approximation, therefore, this heuristic tells us that
we might expect polynomials whose degree does not exceed the number of
variables to have infinitely many solutions.  Unfortunately there are
a number of things that can conspire to upset this heuristic expectation.
First and foremost, local conditions will often provide a reason for
$N(f;B)$ to be identically zero no matter the values of $d$ and
$n$.  By local obstructions we mean that the obvious necessary
conditions for $S_f$ to be non-empty fail. These are the conditions
that the equation $f(\x)=0$ should have a real solution $\x\in\R^n$,
and secondly, that the congruence
$$
f(\x)\equiv 0 \pmod{p^k}
$$
should be soluble for every prime power $p^k$.
When $f$ is homogeneous we must take care to ignore the trivial solution
$\x=(0,\ldots,0)$ in both cases.  

It is quite easy to construct examples that illustrate the failure of
these local conditions. For example, when $d$ is even, the equation
$$
x_1^{2d}+\cdots+x_n^{2d}=0
$$ 
doesn't have any integer solutions, since
it patently doesn't have any real solutions.  
Let us now exhibit an example, due to Mordell
\cite{mordell}, of a polynomial equation that fails to have
integer solutions because it fails to have solutions as a congruence
modulo a prime $p$.  Let $K$ be an algebraic number field
of degree $d$, with ring of integers $\mathcal{O}_K$. Write 
$$
\ma{N}(y_1,\ldots,y_{d}):=N_{K/\Q}(y_1\omega_1+\cdots+y_{d}\omega_d)
$$
for the corresponding norm form, where $\omega_1,\ldots,\omega_d$ is
a basis for $K$ over $\Q$. It is clear that $\ma{N}$ is a
homogeneous polynomial of degree $d$, with coefficients in $\Z$. 

\begin{ex}\label{ex:norm}
Let $\y\in \Z^n$ and let $p$ be a rational prime such that the ideal
$(p)\subset \mathcal{O}_K$ is prime.
Show that $p\mid \ma{N}(\y)$ if and only if $p\mid \y$.
\end{ex}

We define the homogeneous polynomial
\begin{equation}
  \label{eq:f1}
  f_1:=\mathbf{N}(x_1,\ldots,x_d)+p\mathbf{N}(x_{d+1},\ldots,x_{2d})
+\cdots+p^{d-1}\mathbf{N}(x_{d^2-d+1},\ldots,x_{d^2}),
\end{equation}
which has degree $d$ and $d^2$ variables.
We claim that the only integer solution to the equation
$f_1(\x)=0$ is the trivial solution $\x=\ma{0}$. To see this we
argue by contradiction. Thus we suppose there to be a vector 
$\x\in\Z^{d^2}$ such that $f_1(\x)=0$, with $\gcd(x_1,\ldots,x_{d^2})=1$.
Viewed modulo $p$ we deduce that $p\mid \mathbf{N}(x_1,\ldots,x_{d})$, whence
$p\mid x_1,\ldots x_d$ by Exercise \ref{ex:norm}.
Writing $x_i=py_i$ for $1\leq i\leq d$, and substituting into the
equation $f_1=0$, we deduce that
$$
p^{d-1}\mathbf{N}(y_1,\ldots,y_d)+\mathbf{N}(x_{d+1},\ldots,x_{2d})
+\cdots+p^{d-2}\mathbf{N}(x_{d^2-d+1},\ldots,x_{d^2})=0.
$$
But then we deduce in a similar fashion that $p\mid
x_{d+1},\ldots,x_{2d}$. We may clearly continue in this fashion,
ultimately concluding that $p\mid x_1,\ldots,x_{d^2}$, which is a
contradiction. This polynomial illustrates that for any
$d$ it is possible to construct examples of homogeneous polynomials in $d^2$
variables that have no non-zero integer solutions. This fits with
the facts rather well: when $d=2$ we know from Meyer's theorem that an
indefinite quadratic form always has non-trivial solutions as soon as
its rank is at least $5$. Similarly, it is conjectured that $10$
variables are always enough to ensure the solubility in integers of an
arbitrary homogeneous cubic equation. 

So far we have only seen examples of polynomials $f$ for which the
zero locus $S_f$ is empty. In this case the  corresponding counting
function $N(f;B)$ is particularly easy to estimate! There are also examples which show that $N(f;B)$
may grow in quite unexpected ways, even when $n\geq d$. 
An equation that illustrates excessive growth is provided by the
polynomial
\[
f_2:=x_1^d-x_2(x_3^{d-1}+\cdots+x_n^{d-1}).
\]
Here there are ``trivial'' solutions of the type $(0,0,a_3,\ldots,a_n)$ which
already contribute $\gg B^{n-2}$ to the counting function $N(f;B)$, whereas
\eqref{eq:heuristic} predicts that we should have exponent $n-d$.

It is also possible to construct examples of varieties which demonstrate inferior
growth. Let $n>d^2$ and choose any $d^2$ linear forms
$L_1,\ldots,L_{d^2}\in\Z[x_1,\ldots,x_n]$ that are linearly
independent over $\Q$. Consider the form 
$$
f_3:=f_1(L_1(x_1,\ldots,x_n),\ldots,L_{d^2}(x_1,\ldots,x_n)),
$$
where $f_1$ is given by \eqref{eq:f1}.  Then it is clear that
$N(f_3;B)$ has the same order of magnitude as the counting function
associated to the system of linear forms $L_1=\cdots=L_{d^2}=0$. Since
these forms are linearly independent we deduce that $N(f_3;B)$ has
order of magnitude $B^{n-d^2}$, whereas \eqref{eq:heuristic} led us to
expect an exponent $n-d$.  

We have seen lots of reasons why \eqref{eq:heuristic} 
might fail ---
how about some evidence supporting it? One of the most outstanding
achievements in this direction is the following very general result
due to Birch \cite{birch-61}.

\begin{thm}\label{t:birch}
Suppose $f\in\Z[x_1,\ldots,x_n]$ is a non-singular homogeneous
polynomial of degree $d$ in $n>(d-1)2^d$ variables.
Assume that $f(\x)=0$ has non-trivial solutions in $\R$ and each
$p$-adic field $\Q_p$.  Then there is a constant $c_f>0$ such that
\[
N(f;B)\sim c_f B^{n-d},
\]
as $B\rightarrow\infty$.
\end{thm}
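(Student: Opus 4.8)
The plan is to attack this via the Hardy--Littlewood circle method, following Birch. Introduce the exponential sum
$$
S(\alpha) = \sum_{\substack{\x \in \Z^n \\ |\x| \le B}} e(\alpha f(\x)), \qquad e(t) := e^{2\pi i t},
$$
so that orthogonality of additive characters gives $N(f;B) = \int_0^1 S(\alpha)\,\d\alpha$; I would first perform a routine smoothing of the indicator function of the box (or a dyadic decomposition) to simplify the later estimates. One then dissects $[0,1]$ into major and minor arcs with respect to a parameter $\theta>0$: the major arcs $\mathfrak{M}$ consist of those $\alpha$ lying within $q^{-1}B^{-d+\theta}$ of a fraction $a/q$ with $q\leq B^\theta$ and $\hcf(a,q)=1$, while the minor arcs $\mathfrak{m}$ form the complement.

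The crux is the minor-arc bound, and this is where the hypothesis $n>(d-1)2^d$ and the non-singularity of $f$ are used. The tool is repeated Weyl differencing: applying the differencing operator $d-1$ times converts the degree-$d$ form $f$ into a family of bilinear forms, and Cauchy--Schwarz together with a geometry-of-numbers count bounds $|S(\alpha)|^{2^{d-1}}$ by a power of $B$ times the number of integer points in a box on which the associated multilinear form of $f$ lies close to an integer. Estimating this count brings in the bilinear form built from the second-order data of $f$; non-singularity of the projective hypersurface $f=0$ means that the singular locus of the affine cone $f=0$ in $\A^n$ is just the origin, which forces the relevant rank to be maximal, so that the differenced sum cannot be large unless $\alpha$ admits an exceptionally good rational approximation. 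Quantitatively one extracts a dichotomy: either $|S(\alpha)|\ll B^{n-\delta}$ for some $\delta=\delta(d)>0$, or $\alpha$ lies on a major arc. The inequality $n>(d-1)2^d$ is precisely what keeps the exponent saving $\delta$ positive after taking the $2^{d-1}$-th root, and it yields $\int_{\mathfrak{m}}|S(\alpha)|\,\d\alpha = o(B^{n-d})$.

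On the major arcs I would run the standard local analysis: on the arc around $a/q$ write $\alpha = a/q+\beta$ and approximate $S(\alpha)$ by the product of a complete exponential sum modulo $q$ and an oscillatory integral over the box. Summing the complete sums over $a$ and then over $q$ produces the truncated singular series $\mathfrak{S}(B)$, which --- because $n$ is large --- converges absolutely to $\mathfrak{S}=\prod_p \sigma_p$ with each $p$-adic density satisfying $\sigma_p = 1+O(p^{-1-\eta})$; similarly the $\beta$-integral, suitably rescaled, converges to the singular integral $\mathfrak{I}$, a real density which is positive as soon as $f=0$ has a non-singular real point in the unit box. Assembling these gives $\int_{\mathfrak{M}}S(\alpha)\,\d\alpha = \mathfrak{S}\,\mathfrak{I}\,B^{n-d}+o(B^{n-d})$, and the hypotheses that $f=0$ has non-trivial points over $\R$ and every $\Q_p$ guarantee $\mathfrak{I}>0$ and $\sigma_p>0$ for all $p$, hence $c_f:=\mathfrak{S}\,\mathfrak{I}>0$. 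Combining the two contributions yields $N(f;B)\sim c_f B^{n-d}$.

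The main obstacle is without doubt the minor-arc estimate: making the Weyl differencing yield a genuine power saving uniformly in $\alpha$, and controlling how many variables are needed to absorb the $2^{d-1}$-fold loss from Cauchy--Schwarz, is the essential content of Birch's theorem and the origin of the condition $n>(d-1)2^d$. A secondary and comparatively routine matter is the pruning required to handle the tails of the major arcs and to establish convergence of the truncated singular series and singular integral with admissible error terms; these rely once more on having many variables, through the same type of counting estimate that underpins the minor-arc bound.
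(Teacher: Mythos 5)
The paper does not prove this result; it is quoted as Birch's theorem with a citation to the 1961 paper, so there is no in-text argument for you to be compared against. Your proposal is a faithful high-level outline of Birch's circle-method proof: the dissection into major and minor arcs, the $d-1$-fold Weyl differencing with $2^{d-1}$-fold Cauchy--Schwarz loss, the dichotomy between a power-saving minor-arc bound and a good rational approximation to $\alpha$, the use of non-singularity to bound the locus where the differenced sum can be large, and the factorisation $c_f=\mathfrak{S}\,\mathfrak{I}$ on the major arcs with positivity coming from the local solubility hypotheses. This is indeed where the constraint $n>(d-1)2^d$ enters, and the overall logic is the one Birch uses.

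One small imprecision worth correcting: after differencing a degree-$d$ form $d-1$ times one does not obtain a family of \emph{bilinear} forms but rather the full \emph{multilinear} form obtained by polarising $f$ (linear in the last undifferenced variable, with the differencing vectors appearing as parameters); the geometry-of-numbers count is then applied to the system of $n$ linear conditions expressing that this multilinear form is small modulo $1$, and the non-singularity hypothesis is used to show that the variety where these linear forms all drop rank has low dimension (in Birch's notation, that his $V^*$ is as small as possible). With that phrasing adjusted, the sketch correctly reflects the structure of the argument, and in particular correctly identifies the minor-arc Weyl/van der Corput step as the source of the exponential loss and hence of the hypothesis $n>(d-1)2^d$.
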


Birch's result doesn't apply to either of the polynomials $f_2,f_3$
that we considered above, since both of these actually have a rather
large singular locus.
Since generic homogeneous polynomials are non-singular, Birch's result answers our
initial questions completely for typical forms with $n>(d-1)2^d$.  It would be of
considerable interest to reduce the lower bound for $n$, but except
for $d\leq 4$ this has not been done.  Theorem \ref{t:birch} is
established using the circle method, and exhibits a common feature of
all Diophantine problems successfully tackled via this machinery:
the number of variables involved needs to be large compared to
the degree. In particular, there is an obvious disparity between the
range for $n$ in Birch's result and the range for $n$ in
\eqref{eq:heuristic}. 
The main aim of these lecture notes is to discuss the
situation when $n$ is comparable in size with $d$.

\medskip

It turns out that phrasing things in terms of single polynomial
equations is far too restrictive. It is much more satisfactory to work
with  projective algebraic varieties $V\subseteq \bfP^{n-1}$.  
All of the varieties that we will work with are assumed to be cut out
by a finite system of homogeneous equations, all of which are defined over $\Q$.  
In line with the above, our main interest lies with those varieties
for which we expect the set $V(\Q)=V\cap \bfP^{n-1}(\Q)$ to be infinite.
Let $x=[\x] \in \bfP^{n-1}(\Q)$ be a projective rational point, with
$\x=(x_1,\ldots,x_n)\in\Z^n$ chosen so that
$\hcf(x_1,\ldots,x_{n})=1$. Then we define the height of $x$ to be 
$
H(x):=|\x|,
$
where as usual $|\ma{z}|$ denotes the norm $\max_{1\leq i\leq
  n}|z_i|$. Given any subset $U\subseteq V$, we may then define the
counting function
\begin{equation}\lab{count}
\nub:= \#\{ x \in U(\Q):  H(x) \leq B\}, 
\end{equation}
for each $B\geq 1$. The main difference between this counting function 
and the quantity introduced in \eqref{eq:count'} is that we
are now only interested in {\em primitive} integer solutions, by which
we mean that the components of the vector $\x\in\Z^n$ should share no common prime factors.
When the polynomial in \eqref{eq:count'} is homogeneous, this
formulation has the advantage of treating all scalar multiples of a
given non-zero integer solution as a single point.

Recall the definition of the M\"obius function $\mu: \N\rightarrow
\{0,1\}$, which is given by
$$
\mu(n)=\left\{
\begin{array}{ll}
0, & \mbox{if $p^2\mid n$ for some prime $p$},\\
1, & \mbox{if $n=1$},\\
(-1)^r, & \mbox{if $n=p_1\cdots p_r$ for distinct primes $p_1,\ldots,p_r$}.
\end{array}
\right.
$$
The M\"obius function is a multiplicative arithmetic function, and
will play a very useful r\^ole in our work.

\begin{ex}\label{e:mob}
Let $S\subseteq \Z^n$ be an arbitrary set. 
Show that
$$
\#\{\x\in S: \hcf(x_1,\ldots,x_n)=1\}=\sum_{k=1}^\infty \mu(k)
\#\{\x\in S: k\mid x_i,  ~(1\leq i\leq n)\}.
$$
\end{ex}

We now have the tools with which to relate the counting function \eqref{count} to
our earlier counting function $N(f;B)$ in \eqref{eq:count'}, when $U=V$ and $V\subset \bfP^{n-1}$ is a
hypersurface with underlying homogeneous polynomial
$f\in\Z[x_1,\ldots,x_n]$.  On noting that $\x$ and
$-\x$ represent the same point in $\bfP^{n-1}$, it follows from
Exercise \ref{e:mob} that
\begin{equation}
  \label{eq:111.4}
N_V(B)=\frac{1}{2}\sum_{k=1}^\infty \mu(k) N(f;B/k).
\end{equation}
When $f$ is non-singular of degree $d$, with $n>(d-1)2^d$, it can be
deduced from Theorem \ref{t:birch} that
$N_V(B)\sim \tilde{c}_f B^{n-d}$, where 
$\tilde{c}_f=\frac{1}{2} \zeta(n-d)^{-1}c_f$.

Returning to the counting function \eqref{count}, it is easy to check
that $\nub$ is bounded 
for each $B$, no matter what the choice of $U$ and $V$.
This follows on combining the fact that $N_V(B)\leq N_{\bfP^{n-1}}(B)$
with the self-evident inequalities 
$
N_{\bfP^{n-1}}(B)\leq \#\{\x\in\Z^n: |\x|\leq B\}\leq (2B+1)^n.
$  
In fact it is not so hard to
establish an asymptotic formula for $N_{\bfP^{n-1}}(B)$.

\begin{ex}\label{ex:proj}
Let $n\geq 2$. Use Exercise \ref{e:mob} to show that
$$
N_{\bfP^{n-1}}(B)= \frac{2^{n-1}}{\zeta(n)}B^{n}+O_n\big(B^{n-1}(\log
B)^{b_n}\big),
$$
where $b_2=1$ and $b_n=0$ for $n>2$.
\end{ex}

\subsection{Notation}

Before embarking on the main thrust of these lecture notes, we take a
moment to summarise some of the key pieces of notation that we will
make use of.

\begin{itemize}
\item
$A(x)=O(B(x))$ means that there exists a constant $c>0$ and $x_0\in\R$
such that $|A(x)|\leq c B(x)$ for all $x\geq x_0$.  
Throughout our work we will follow the convention that the implied
constant is absolute unless explicitly indicated otherwise by an  appropriate subscript.
We will often use the alternative notation $A(x)\ll B(x)$
or $B(x)\gg A(x)$.
\item
$A(x)\asymp B(x)$ means  $A(x)\ll B(x)\ll  A(x)$.
\item
$A(x)=o(B(x))$ means  $\lim_{x\rightarrow \infty} A(x)/B(x)=0$.
\item
$A(x)\sim B(x)$ means $\lim_{x\rightarrow \infty} A(x)/B(x)=1$.
\item
$\N=\{1,2,3,...\}$ will denote the set of natural numbers.
\item
$Z^n$ will denote the set of primitive vectors in $\Z^n$, and $Z_*^n$
will denote the set of $\ma{v}\in Z^n$ such that $v_1\cdots v_n\neq 0$.
\item
$|\ma{z}|:=\max_{1\leq
  i\leq n}|z_i|$, for any vector $\z\in\R^n$.
\end{itemize}

\subsection{The Manin conjectures}\label{s:conjectures}

Around 1989 Manin initiated a program to relate the asymptotic
behaviour of counting functions to the intrinsic geometry of the
underlying variety, for suitable families of algebraic varieties. 
It is precisely this rich interplay between arithmetic and geometry that this set of
lecture notes aims to communicate.

Several of the varieties that we have looked at so far have many
rational points, in the sense that $N_{V}(B)$ grows like a power of
$B$. For such varieties it is natural to look at the quantity
$$
\be_{V}:=\lim_{B\rightarrow \infty} \frac{\log N_{V}(B)}{\log B},
$$
assuming that this limit exists.  In general we may consider $\be_U$
for any Zariski open subset $U\subseteq V$. It is clear that $\be_U$ gives a
measure of ``how  large'' the set $U(\Q)$ is, since we will have
$$
B^{\be_U-\ve}\ll N_U(B)\ll B^{\be_U+\ve}
$$
for sufficiently large values of $B$ and any $\ve>0$. 
The insight of Manin was to try and
relate $\be_U$ to the geometry of $V$ via the 
introduction of a certain quantity $\al(V)$. 
Before defining this quantity we will need some facts from algebraic
geometry. The facts that we will need are summarised in more detail in
the book of Hindry and Silverman \cite[\S A]{hind}.

Assume that $V\subset\bfP^{n-1}$ is non-singular
and let 
$\Div(V)$ be the free abelian group generated 
by finite formal sums of the shape $D=\sum n_Y Y$, with $n_Y\in\Z$ and
$Y$ running over geometrically 
irreducible codimension $1$ subvarieties of $V$. 
A divisor $D\in\Div(V)$ is {\em effective} if $n_Y\geq 0$ for all $Y$,
and $D$ is said to be {\em principal}  if 
$D=\sum_{Y}\ord_Y(f) Y=D_f$, say,  for some rational function $f\in \C(V)$.
The intuitive idea behind the definition of the $\ord_Y$
function for a codimension $1$ subvariety $Y$ is that $\ord_Y(f)=k$
if $f$ has a zero of order $k$ along $Y$, while $\ord_Y(f) = -k$ if
$f$ has a pole of order $k$ along $Y$. If $f$ has
neither a zero nor a pole along $Y$, then $\ord_Y(f) = 0$.
Since $D_f+D_g=D_{fg}$ and $D_{1/f}=-D_f$, the
principal divisors form a subgroup $\PDiv(V)$ of $\Div(V)$. 
We define the {\em geometric Picard group} associated to
$V$ to be 
$$
\Pic_{\cQ}(V):=\Div(V)/\PDiv(V).
$$
A divisor class $[D]\in\Pic_{\cQ}(V)$ is {\em effective} if there
exists an effective divisor in the class.
One may also construct the {\em geometric N\'eron--Severi group}
$\NS_{\cQ}(V)$, which is 
$\Div(V)$ modulo a further equivalence relation called ``algebraic
equivalence''. When $V$ is covered by curves
of genus zero, as in all the
cases of interest to us in these lecture notes, it turns out that 
$\NS_{\cQ}(V)=\Pic_{\cQ}(V)$.
We illustrate the definition of $\Pic_{\cQ}(V)$ by calculating it in
the simplest possible case $V=\bfP^{n-1}$.

\begin{lem}\label{lem:calPIC}
We have $\Pic_{\cQ}(\bfP^{n-1})=\Z$.
\end{lem}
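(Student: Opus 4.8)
The plan is to exhibit an explicit isomorphism $\Pic_{\cQ}(\bfP^{n-1})\to\Z$ through a ``degree'' homomorphism, and then to check that this map is well defined, surjective, and injective.

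First I would record the fact that every geometrically irreducible codimension $1$ subvariety $Y\subset\bfP^{n-1}$ is a hypersurface: because $\cQ[x_1,\ldots,x_n]$ is a unique factorisation domain, the homogeneous prime ideal of height $1$ defining $Y$ is generated by a single irreducible homogeneous polynomial $f_Y$, unique up to a non-zero scalar. I then set $\deg Y:=\deg f_Y$ and extend $\Z$-linearly to a homomorphism $\deg\colon\Div(\bfP^{n-1})\to\Z$, $\sum_Y n_Y Y\mapsto\sum_Y n_Y\deg Y$. Working in an affine chart $x_i\neq 0$, whose coordinate ring is again a UFD and in which $Y$ is cut out by the dehomogenisation of $f_Y$, one checks that the local ring of $\bfP^{n-1}$ at the generic point of $Y$ is a discrete valuation ring, and that for a homogeneous polynomial $g=c\prod_Y f_Y^{a_Y}$ one has $\ord_Y(g/x_i^{\deg g})=a_Y$. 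Consequently a rational function $f\in\cQ(\bfP^{n-1})$, being a quotient $g/h$ of two homogeneous polynomials of the same degree, has divisor $D_f=\sum_Y(a_Y(g)-a_Y(h))Y$ of degree $\deg g-\deg h=0$. Hence $\deg$ vanishes on $\PDiv(\bfP^{n-1})$ and descends to a homomorphism $\Pic_{\cQ}(\bfP^{n-1})\to\Z$.

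Surjectivity is immediate, since the hyperplane $H=\{x_1=0\}$ is irreducible with $\deg[H]=1$. For injectivity I would take $D=\sum_Y n_Y Y$ with $\deg D=0$ and put $\phi:=\prod_Y f_Y^{n_Y}$; its numerator and denominator are homogeneous of degrees $\sum_{n_Y>0}n_Y\deg Y$ and $\sum_{n_Y<0}(-n_Y)\deg Y$ respectively, and these agree because $\deg D=0$, so $\phi$ is a genuine element of $\cQ(\bfP^{n-1})$. Computing in the charts as above yields $D_\phi=D$, so $D$ is principal; thus $\deg$ is injective and $\Pic_{\cQ}(\bfP^{n-1})\cong\Z$, generated by the class of a hyperplane. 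The one place demanding genuine care is the identification of codimension $1$ subvarieties with hypersurfaces and the compatibility of $\ord_Y$ with polynomial factorisation (i.e. the local DVR computation); granting that, the remainder is routine bookkeeping with degrees.
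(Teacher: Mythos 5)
Your proof is correct and follows essentially the same route as the paper: both define the degree homomorphism $\deg\colon\Div(\bfP^{n-1})\to\Z$, observe that principal divisors have degree zero, and show conversely that any degree-zero divisor $\sum n_Y Y$ is the divisor of the rational function $\prod f_Y^{n_Y}$. You simply supply more of the underlying justification (the UFD identification of irreducible codimension-$1$ subvarieties with hypersurfaces, and the DVR computation of $\ord_Y$) that the paper takes as known.
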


\begin{proof}
An irreducible divisor on $\bfP^{n-1}$ has the form $Y=\{F=0\}$ for
some absolutely irreducible form $F\in\C[x_1,\ldots,x_n]$. For such a divisor, define the degree of $Y$ to be
$\deg Y = \deg F$. Extend the definition of degree additively, so that
$$
\deg\Big( \sum_Y n_Y Y\Big) =\sum_Y n_Y\deg Y.
$$
The map $\deg: \Div(\bfP^{n-1})\rightarrow \Z$ is clearly a homomorphism, and
to establish the lemma it will suffice to show that the kernel of this
map is precisely the subgroup $\PDiv(\bfP^{n-1})$. 
To see this, we note that $\deg D_f = 0$ for any rational function
$f = F_1/F_2$. Indeed, the sum of the
positive degree terms will be $\deg F_1$, whereas the sum of the negative degree
terms will be $\deg F_2$, and this two degrees must coincide in order
to have a well-defined rational function. Conversely, if $D = n_1Y_1 + \cdots + n_kY_k$
has degree zero, with $Y_i = \{F_i=0\}$ for $1\leq i\leq k$, then 
$f = F_1^{n_1}\cdots F_k^{n_k}$  is a well-defined rational function
on $\bfP^{n-1}$ with $D_f=D$. This completes the proof of the lemma.
\end{proof}

Returning to the setting of arbitrary non-singular varieties
$V\in\bfP^{n-1}$, let $H\in\Div(V)$ be a divisor
corresponding to a hyperplane section. Furthermore, let 
$K_V\in \Div(V)$ be the canonical
divisor. This is a common abuse of notation: really $K_V$ refers to
the class of $D_{\omega}$ in $\Pic_{\cQ}(V)$ for any differential
$(\dim V)$-form
$\omega$ of $V$.  It would take us too far afield to include precise
definitions of these objects here.
We may now define the real number
$$
\al(V):=\inf\{r\in\R: \mbox{$r[H]+[K_V]\in \eff(V)$}\},
$$
where 
$$
\eff(V):=\{ c_1[D_1]+\cdots+c_k [D_k]: c_i\in\R_{\geq 0},
~\mbox{$[D_i]\in \NS_{\cQ}(V)$ effective}\}
$$
is the so-called {\em effective cone of divisors}.
It does not matter too much if this definition is currently
meaningless: the main thing is that $\al(V)$ depends in an explicit
way on the geometry of $V$ over $\C$. We now have the following
basic conjecture due to Batyrev and Manin  \cite[Conjecture A]{b-m}.

\begin{con}\label{c:b<a}
For all $\ve>0$ there exists a Zariski open subset $U\subseteq V$ such
that
$
\be_{U}\leq \al(V)+\ve.
$
\end{con}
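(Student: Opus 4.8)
The plan is to translate the geometric condition defining $\al(V)$ — the effectivity of $r[H]+[K_V]$ — into a Diophantine inequality on rational points by means of the Weil height machine, to choose $U$ so as to remove the relevant supports, and then to count points of bounded height on $U$. Fix $\ve>0$. Since $\al(V)$ is an infimum, I would first choose a rational number $r$ with $\al(V)<r<\al(V)+\ve$ for which $r[H]+[K_V]\in\eff(V)$. By the definition of $\eff(V)$ this means $r[H]+[K_V]=\sum_{i=1}^k c_i[D_i]$ in $\NS_{\cQ}(V)$ for some $c_i\in\R_{\geq 0}$ and effective divisors $D_1,\dots,D_k$. I would then take
$$
U:=V\setminus\bigl(\mathrm{Supp}(D_1)\cup\cdots\cup\mathrm{Supp}(D_k)\bigr),
$$
a non-empty Zariski open subset of $V$; its dependence on $\ve$ enters solely through the choice of $r$.

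The next step is to apply the Weil height machine, which attaches to every divisor class a height function, well defined up to bounded multiplicative factors and additive and functorial in the class. Applied to the relation above, for a point $x\in V(\Q)$ lying off every $\mathrm{Supp}(D_i)$ this gives
$$
H_H(x)^{r}\,H_{K_V}(x)\asymp\prod_{i=1}^k H_{D_i}(x)^{c_i}.
$$
The one elementary ingredient needed is that the height attached to an \emph{effective} divisor is bounded below by a positive constant on the complement of its support; hence the right-hand side is $\gg 1$ for $x\in U(\Q)$, and therefore $H_{-K_V}(x)\ll H_H(x)^{r}$ on $U(\Q)$. Since $H_H$ coincides up to bounded factors with the height $H$ used to define $N_U(B)$, every $x$ counted by $N_U(B)$ satisfies $H_{-K_V}(x)\ll B^{r}$.

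The remaining task — to deduce $N_U(B)\ll B^{r+\ve}$, that is, to bound the number of $x\in U(\Q)$ of hyperplane height at most $B$ using only the constraint $H_{-K_V}(x)\ll B^{r}$ — is the step I expect to be the genuine obstacle, and it is exactly why Conjecture \ref{c:b<a} remains open in general: for an arbitrary smooth $V$ there is no known way to pass from a bound on the anticanonical height to a power-saving count of points of bounded hyperplane height. In concrete families one injects the missing structure. For $V$ of general type one has $[K_V]\in\eff(V)$, so $\al(V)\leq 0$, and the statement — essentially finiteness of rational points outside a divisor — is approached via Northcott finiteness together with a fibration argument lowering the dimension. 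For toric varieties the torus action furnishes an explicit parametrisation of the rational points; for generalised flag varieties the Bruhat cell decomposition does the same; and for non-singular complete intersections of low degree in many variables one invokes the circle method, as in Theorem \ref{t:birch}. In each case the recurring pattern is to use the available morphism, group action, or cell decomposition to reduce the count on $U$ to a lower-dimensional instance of the same problem or to an explicit lattice-point count, and then to check that the resulting exponent is at most $r$. Finally, since $\be_U$ need not exist as a limit, what this scheme genuinely bounds is $\limsup_{B\to\infty}\log N_U(B)/\log B$, which is the quantity one really wants to control.
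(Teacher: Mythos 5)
This statement is a \emph{conjecture} (it is Batyrev--Manin's ``Conjecture A,'' cited in the paper as \cite[Conjecture A]{b-m}); the paper records it without proof, and no proof is known. Your proposal rightly acknowledges this and does not claim to close the argument. What you do sketch is the correct and standard reduction: choose $r$ with $\al(V)<r<\al(V)+\ve$, so that $r[H]+[K_V]=\sum_i c_i[D_i]$ with $D_i$ effective (this is always possible since adding $\lambda[H]$ to any $r$ that works keeps one inside $\eff(V)$); take $U$ to be the complement of the supports; apply the additivity of the height machine together with the fact that heights of effective divisors are bounded below away from their supports to get $H_{-K_V}(x)\ll H_H(x)^r$ for $x\in U(\Q)$. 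This correctly reduces Conjecture \ref{c:b<a} for the embedding height $H$ to the single estimate $\#\{x\in U(\Q): H_{-K_V}(x)\leq T\}\ll_\ve T^{1+\ve}$, i.e.\ to the anticanonical-height case; and you identify that step as the one with genuine content, which is exactly where the conjecture is open. So your write-up is a sound gloss on why the conjecture is plausible and on what is really being asserted, not a proof of it --- which is the correct stance to take. One cosmetic remark: the $D_i$ in $\eff(V)$ live in $\NS_{\cQ}(V)$ and need not be defined over $\Q$; to keep $U$ defined over $\Q$ one deletes the Galois orbit of each support, which does not change the rest of the argument.
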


A non-singular variety $V\subset\bfP^{n-1}$ is said to be {\em Fano} if $K_V$ does not lie in
the closure of the effective cone $\eff(V)\subset \NS_{\cQ}(V)\otimes_\Z
\R$. This is equivalent to $-K_V$ being ample, and implies in
particular that $V$ is covered by rational curves. 
As an example, suppose that $V$ is a complete
intersection, with $V=W_1\cap \cdots \cap W_t$ for hypersurfaces $W_i \subset \bfP^{n-1}$ of degree $d_i$.
Then $V$ is Fano if and only if 
$
d_1+\cdots+d_t< n.
$  
With this in mind we have the following supplementary prediction.

\begin{con}\label{c:b=a}
Assume that $V$ is Fano and $V(\Q)$ is Zariski dense in $V$. Then 
there exists a Zariski open subset $U\subseteq V$ such
that
$
\be_{U}=\al(V).
$
\end{con}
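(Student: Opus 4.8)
The plan is to produce a single Zariski open $U\subseteq V$ for which $N_U(B)\asymp B^{\al(V)}$ --- in fact one expects the sharper shape $N_U(B)\sim c\,B^{\al(V)}(\log B)^{\rho-1}$ with $\rho$ the Picard rank, but $\asymp$ already forces $\be_U=\al(V)$. This breaks into an upper bound $N_U(B)\lle B^{\al(V)+\ve}$, which is Conjecture \ref{c:b<a} strengthened so that one open set works for every $\ve$ at once, and a matching lower bound $N_U(B)\gg B^{\al(V)}$ for the same $U$; the tension is that the lower bound wants $U$ to contain many points while the upper bound wants $U$ to omit the subvarieties along which rational points accumulate faster than $B^{\al(V)}$, so these two demands must be made compatible.

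For the lower bound I would restrict to rational curves. As $V$ is Fano, $-K_V$ is ample and $V$ is covered by rational curves; using that $V(\Q)$ is Zariski dense one aims to realise a covering family of such curves over $\Q$, each a copy of $\PP^1_\Q$ (a smooth rational curve with a rational point is $\PP^1_\Q$). If $C\cong\PP^1_\Q$ has $H\cdot C=h$, then parametrising $C$ and composing with the embedding exhibits it through a degree-$h$ Veronese-type map, so the ambient height of a point of $C$ is its $\PP^1$-height raised to the $h$th power; Exercise \ref{ex:proj} with $n=2$ then gives $N_C(B)\asymp_C B^{2/h}$. Taking $U$ to be the complement of the accumulating subvarieties and $C\subseteq U$ of least available hyperplane degree $h_0$ yields $\be_U\ge 2/h_0$, and the geometric heart of the matter is the identity $2/h_0=\al(V)$. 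Since $\al(V)$ is pinned down by where $\al(V)[H]+[K_V]$ meets the boundary of the effective cone $\eff(V)$ --- which is rational polyhedral because $V$ is Fano --- this identity is a duality statement between $\eff(V)$ and the Mori cone of curves, the minimal covering family sitting on the extremal ray dual to the relevant supporting face. When $\al(V)$ is not of the form $2/h$ with $h\in\N$, no single curve can do the job (Conjecture \ref{c:b<a} then actually forbids a low-degree curve inside $U$), and one must instead sum $N_{C_t}(B)$ over a positive-dimensional family $\{C_t\}$ of curves of fixed degree, the parameters supplying the missing power of $B$ provided one controls how many rational points two members of the family can share.

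The decisive obstacle is that this is genuinely a conjecture, with neither half available in general. The upper bound rests on Conjecture \ref{c:b<a}, which is proved only for restricted classes. For the lower bound the real work is: (i) showing the accumulating locus is a proper closed subset, so that an admissible $U$ exists at all --- this is the familiar difficulty with thin sets and accumulating subvarieties, and for some Fano varieties one even has to correct $\al(V)$ itself; (ii) establishing the cone-theoretic equality $2/h_0=\al(V)$, and in the family case turning the lower bound into a sharp count; and (iii) producing, over $\Q$, enough rational curves carrying enough rational points, since mere Zariski density of $V(\Q)$ is much weaker than what the argument consumes. In practice one therefore proceeds case by case: for del Pezzo surfaces one takes $C$ to be a conic (so $h_0=2$ and $\al(V)=1$) and proves the upper bound by descent together with analytic number theory, while for toric varieties and their relatives one uses harmonic analysis on the adelic points following Batyrev and Tschinkel.
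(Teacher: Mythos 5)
This is Conjecture~\ref{c:b=a}, the Batyrev--Manin conjecture, and the paper presents it as an open conjecture rather than proving it: there is no proof in the text for your attempt to be compared with. Your write-up is honest about this, and the high-level architecture you describe --- upper bound as a strengthening of Conjecture~\ref{c:b<a}, lower bound by exhibiting rational curves or families of rational curves inside a judiciously chosen open $U$, with del Pezzo surfaces handled by conics and descent and toric varieties by height zeta functions --- is indeed the strategy pursued in the cases where partial results exist.

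However, the asserted ``geometric heart of the matter'', the identity $2/h_0=\al(V)$, is wrong as a general statement, and this is not a small slip because you build the lower-bound argument around it. Take $V=\bfP^{n-1}$: the paper notes $\al(\bfP^{n-1})=n$, but the minimal covering rational curves are lines with $H\cdot L=1$, so $h_0=1$ and $2/h_0=2$, which disagrees whenever $n\geq 3$. More generally, if $\al(V)[H]=-[K_V]$ and $C$ is a minimal free rational curve with $-K_V\cdot C=\ell$ (the pseudo-index), then $H\cdot C=\ell/\al(V)$ and a single such curve supplies only $N_C(B)\asymp B^{2\al(V)/\ell}$; this equals $B^{\al(V)}$ precisely when $\ell=2$, which happens for del Pezzo surfaces but fails for most Fano varieties. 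What is actually expected is that the full family of minimal curves supplies the remaining power of $B$, so the identity one must argue is not a duality between a single extremal ray and the face of $\eff(V)$, but a dimension count for the parameter space of curves through a generic point combined with control of how their point-counts overlap. You gesture at this with the remark about positive-dimensional families, but you present $2/h_0=\al(V)$ as the base case rather than as the special situation $\ell=2$; anyone trying to turn your sketch into a proof would hit this immediately. Your final paragraph listing the real obstructions --- Conjecture~\ref{c:b<a} being open, accumulating and thin subsets, and the gulf between Zariski density of $V(\Q)$ and having enough rational curves over $\Q$ --- is accurate and is why this remains a conjecture.
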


We have $\al(V)=n-d_1-\cdots-d_t$ when $V$ is a non-singular complete intersection as above.
In particular, when $V$ is a hypersurface of degree $d$ we may deduce from Theorem
\ref{t:birch} that Conjecture \ref{c:b=a} holds when $n$ is
sufficiently large in terms of $d$.
It also holds for $n\geq 3$ when $d=2$ (see Heath-Brown \cite{hb-crelle}, for example).
Finally we remark that Conjecture \ref{c:b=a} holds
for projective space. This follows from Exercise
\ref{ex:proj} and the fact that $[K_{\bfP^{n-1}}]=[-nH]$ in
$\Pic_{\cQ}(\bfP^{n-1})$, whence
$\al(\bfP^{n-1})=n$. 

\medskip

The title of these lecture notes suggests that we will focus
our attention on the situation for varieties of dimension $2$.  Before
doing so, let us consider the situation for curves briefly. For
simplicity we will discuss only projective plane curves $V\subset
\bfP^2$ of degree $d$. There is a natural trichotomy among such curves, according to
the genus $g$ of the curve. For curves with $g=0$, otherwise known as
{\em rational curves}, it is possible to show that $N_V(B)\sim c_V
B^{2/d}$. This is in complete accordance with the Manin conjecture. 
It is an amusing exercise to check that such an asymptotic formula
holds with $d=2$ when $V$ is given by the equation
$x_1^2+x_2^2=x_3^2$, for example.
When $g=1$ and $V(\Q)\neq \emptyset$, the curve is elliptic and it has
been shown by N\'eron \cite[Theorem B.6.3]{hind} that
$$
N_V(B)\sim c_V (\log B)^{r_V/2},
$$
where $r_V$  denotes the rank of $V$.  Thus although there can be
infinitely many points in $V(\Q)$, we see that the corresponding
counting function grows much more slowly than for rational
curves. Elliptic curves are not Fano, and so this is not covered by
the Manin conjecture. However it does confirm Conjecture~\ref{c:b<a},
since $\al(V)=0$. 
When $g\geq 2$ the work of
Faltings  \cite{faltings} shows that $V(\Q)$ is always finite, and so
it does not make sense to study $N_V(B)$.

Let us now concern ourselves with Fano varieties of dimension $2$.
We begin with some simple-minded numerics. Suppose that we are given a Fano
variety $V$ of dimension $2$ and degree $d$, which is a non-singular
complete intersection in $\bfP^{n-1}$. 
Thus $V=W_1\cap \cdots \cap W_t$ for hypersurfaces $W_i
\subset \bfP^{n-1}$ of degree $d_i$, and we assume that the
intersection is transversal at a generic point of $V$. We are not
interested in hyperplane sections of $V$, and so we will assume without
loss of generality that $d_i\geq 2$ for each $1\leq i\leq t$.
Then the following inequalities must be satisfied:
\begin{enumerate}
\item $d_1+\cdots+d_t< n$, [Fano]
\item $n-1-t=2$, [complete intersection of dimension $2$]
\item $d=d_1\cdots d_t$, [B\'ezout]
\item $d_t\geq \cdots \geq d_1\geq 2$.
\end{enumerate}
It follows that the only possibilities are
$$
(d;d_1,\ldots,d_t;n;t)\in\big\{(2;2;4;1),(3;3;4;1),(4;2,2;5;2)\big\}.
$$
These surfaces correspond to a quadric in $\bfP^3$, a cubic surface in
$\bfP^3$, and an intersection of $2$ quadrics in $\bfP^4$, respectively.
We have already observed that the Manin conjecture holds for
quadrics. Hence one would like to examine the latter two
surfaces. In fact these are the most familiar examples of ``del Pezzo
surfaces''.  We will see in \S \ref{s:dp5} that not all del Pezzo surfaces are 
complete intersections,  and so we have missed out on several surfaces
in this analysis. Nonetheless, a substantial portion of these lecture notes will focus on 
cubic surfaces in $\bfP^3$ and intersections of $2$
quadrics in $\bfP^4$. 

It is now time to give a formal definition of a del Pezzo surface. 
Let us begin with a discussion of non-singular del Pezzo surfaces.
Let $d\geq 3$. Then a {\em del Pezzo surface of degree $d$} is a
non-singular surface $S \subset \bfP^d$ of degree $d$,
with very ample anticanonical divisor $-K_S$. This latter condition is
equivalent to the equality $[-K_S]=[H]$ in $\Pic_{\cQ}(S)$, for a
hyperplane section $H\in \Div(S)$. 
The facts that we will recall here are all established in the book of
Manin \cite{manin-book}, for example.  It is well-known that del Pezzo surfaces
$S\subset\bfP^d$  arise either as the quadratic Veronese embedding of a quadric
in $\bfP^3$, which is a del Pezzo surface of degree $8$ in
$\bfP^8$ (isomorphic to $\bfP^1\times \bfP^1$), or as the blow-up of $\bfP^2$ along $9-d$ points in general
position, in which case the degree of $S$ satisfies $3
\leq d \leq 9$.   We will meet the notion of ``general position'' 
when $d=3$ in \S \ref{s:lines}. 
Since $[-K_S]=[H]$ in $\Pic_{\cQ}(S)$, we see that
$\al(S)=1$ for non-singular del Pezzo surfaces of degree $d$. 

The geometry of del Pezzo surfaces is very beautiful and well-worth
studying. However, to avoid straying from the main focus of these
lecture notes, we will content ourselves with simply quoting the facts
that are needed. One of the remarkable features
of del Pezzo surfaces of small degree is that each such surface contains finitely
many lines. The precise number of lines is recorded in Table \ref{t:lines}.
\begin{table}[!ht]
\begin{center}
\begin{tabular}{|c|c|}
\hline
$d$ & number of lines  \\
\hline
\hline
$3$ & $27$\\
$4$ & $16$\\
$5$ & $10$\\
$6$ & $6$\\
\hline
\end{tabular}
\end{center}
\caption{Lines on non-singular del Pezzo surfaces of degree
  $d$}
\label{t:lines}
\end{table}

It turns out that dealing with del Pezzo surfaces of degree $d$ gets easier
as the degree increases. In these lecture notes we will focus our
attention on the del Pezzo surfaces of degree $d\in\{3,4,5,6\}$. 
It turns out that for del Pezzo surfaces of degree
$d$, the geometric Picard group
$\Pic_{\cQ}(S)$ is a finitely generated free $\Z$-module, with 
\begin{equation} 
  \label{eq:gPic} 
\Pic_{\cQ}(S)\cong \Z^{10-d}.
\end{equation} 
This is established in  Manin \cite{manin-book}, where an explicit basis for the group is also
provided (see \S \ref{s:lines} for a concrete example). 
Let $K$ be a splitting field for the finitely many lines
contained in $S$. The final invariant that we will need to introduce is the 
{\em Picard group} 
\begin{equation} 
  \label{eq:pg} 
\Pic(S):=\Pic_{\cQ}(S)^{\Gal(K/\Q)}
\end{equation} 
of the surface.  This is just the set of elements in $\Pic_{\cQ}(S)$
that are fixed by the action of the Galois group.
Write $\rho_S$ for the rank
of $\Pic(S)$. Let $U\subset S$ be the
Zariski open subset formed by deleting the finitely many lines from
$S$. Then we have the following \cite[Conjecture $C'$]{b-m}.

\begin{con}\label{c:manin-ns}
Suppose that $S\subset \bfP^d$ is a non-singular del Pezzo surface of
degree $d$.   Then there exists a non-negative constant $c_{S, H}$ such that 
\begin{equation}\lab{c1}
\nub = c_{S,H}B (\log B)^{\rho_S-1}\big(1+o(1)\big).
\end{equation}
\end{con}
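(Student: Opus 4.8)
The plan is to establish \eqref{c1} via the \emph{universal torsor method}: descend from $U$ to an auxiliary affine variety on which rational points become, up to a harmless bounded factor, primitive integral points, and then count those integral points by elementary analytic means. Since $[-K_S]=[H]$ for a non-singular del Pezzo surface, the height $H$ in \eqref{count} is the anticanonical height, so $\nub$ is counting primitive integer vectors on the affine cone over $S$ that avoid the cone over the lines recorded in Table \ref{t:lines}. Over $\cQ$ the surface $S$ (excluding the Veronese quadric) is the blow-up of $\bfP^2$ at $9-d$ points in general position, so its Cox ring $\Cox(S)$ is a finitely generated $\Pic_{\cQ}(S)$-graded ring: a polynomial ring in $6$ variables when $d=6$, and for $d\in\{3,4,5\}$ a ring generated by the distinguished sections cutting out the lines of Table \ref{t:lines} --- together with one extra section when $d\in\{3,4\}$ --- modulo explicit quadratic relations of Pl\"ucker type. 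The first step is the descent proper: a point of $U(\Q)$ lifts, uniquely up to the N\'eron--Severi torus action and Galois conjugacy, to a primitive integral point of the universal torsor $\mcal{T}_S\subseteq\Spec\Cox(S)$, and under this lift the anticanonical height becomes a single monomial in the torsor coordinates.

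Granting the descent, the next step is bookkeeping. The count $\nub$ equals, up to a bounded factor governed by the units and class group of the splitting field $K$, the number of integral points on $\mcal{T}_S$ --- tuples obeying the torsor relations --- that satisfy a prescribed system of coprimality conditions among the coordinates and for which the height monomial is at most $B$. The coprimality conditions are stripped away by M\"obius inversion exactly as in Exercise \ref{e:mob}, and after inversion the problem becomes a multiple sum in which each prime $p$ contributes a local weight; carrying out the $p$-summations formally already exhibits the arithmetic part of $c_{S,H}$ as a convergent Euler product times a density of real points, i.e.\ as Peyre's Tamagawa number $\tau_H(S)$.

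The analytic core is to evaluate the remaining multiple sum over the free torsor variables. I would sum one variable at a time from the inside out: the innermost sum runs over an interval whose length is dictated by $B$ and by the outer variables, and is replaced by that length plus an error; peeling off the later sums, and applying the Dirichlet hyperbola method whenever two long variables are linked by a torsor relation, one extracts a main term
\[
\nub = c_{S,H}\,B(\log B)^{\rho_S-1}\bigl(1+o(1)\bigr),
\]
the exponent $\rho_S-1$ being forced by the rank of $\Pic(S)$ exactly as in the prediction. The closing step is to check that the constant $c_{S,H}$ so produced matches Peyre's conjectural value: the Tamagawa number $\tau_H(S)$ multiplied by the rational cohomological factor built from $\rho_S$ and from $H^1(\Gal(K/\Q),\Pic_{\cQ}(S))$. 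This last verification is formal but lengthy, amounting to a comparison of the Euler factors against the $p$-adic point densities of $S$.

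The step I expect to be the real obstacle is the analytic core, whose difficulty is governed entirely by $d$. For $d=6$ the surface is toric, so one may bypass torsors altogether and deduce \eqref{c1} from Batyrev--Tschinkel's harmonic analysis on toric varieties; the torsor count also runs cleanly. For $d=5$ the single relation is a Pl\"ucker quadric and the iterated summation, though intricate, still closes with room to spare. For $d=4$ the method succeeds for many surfaces, but the error terms and their uniformity depend on the Galois action on the $16$ lines, so no single argument treats all cases. For $d=3$ --- genuine cubic surfaces --- the universal torsor lives in $\A^{27}$ cut out by many quadratic relations, leaving too few free summation variables relative to the size of the region $\{H(x)\le B\}$; the hyperbola-method error terms then fail to beat the main term. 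The circle method is of no help here either, since a cubic surface in $\bfP^3$ has far too few variables. Consequently the general non-singular cubic surface lies beyond the reach of this plan, and one is forced to retreat to cubics carrying extra structure --- a conic bundle fibration, additional automorphisms, or mild singularities --- where supplementary equidistribution inputs become available. Disposing of the general cubic surface is thus the crux, and I do not expect the strategy above to settle it.
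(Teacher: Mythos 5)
The statement you were asked to prove is Conjecture \ref{c:manin-ns}, which is precisely Batyrev and Manin's Conjecture $C'$; the paper presents it as a \emph{conjecture}, not a theorem, and offers no proof of it. There is therefore no argument in the paper against which to compare your proposal. Your write-up is an informed survey of the universal-torsor strategy and of the current state of the art, and in fact it closely mirrors the paper's own discussion in \S\ref{s:ut} and the subsequent sections, where individual cases ($d\geq 6$ toric, the split quintic, various singular quartics and cubics) are handled one by one. But a survey of a strategy is not a proof, and you say so yourself: your final paragraph concedes that for $d=3$ the iterated summation cannot be closed and that the general non-singular cubic surface ``lies beyond the reach of this plan.''

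That concession is the gap, and it is decisive: Conjecture \ref{c:manin-ns} quantifies over \emph{all} non-singular del Pezzo surfaces of degree $d\in\{3,\ldots,9\}$, so a method that confessedly fails for $d=3$ (and, as the paper itself notes via \eqref{eq:hh} and the surrounding discussion, is not known for a single non-singular cubic surface) does not establish the statement. Beyond the headline issue, several intermediate steps you invoke are themselves open in this generality: that the anticanonical height pulls back to a \emph{single} monomial on the torsor requires a specific choice of height and a careful analysis of the torsor embedding; that ``the coprimality conditions are stripped away by M\"obius inversion'' and ``each prime $p$ contributes a local weight'' glosses over the convergence and uniformity issues that, for real proofs (e.g.\ the quintic result of la Bret\`eche cited here, or the singular cases treated in \S\ref{s:a1} and \S\ref{s:d4_cubic}), occupy the bulk of the argument; and matching the resulting constant to Peyre's $\tau_H(S)$ is, as you say, ``formal but lengthy'' and has to be verified case by case. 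In short, what you have written is an accurate description of why the conjecture is believed and how it is attacked, not a proof of it; the honest conclusion is that no proof can currently be given, and the paper itself is explicit on this point.
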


In these lecture notes this is what will commonly be termed as ``the
Manin conjecture''.  Note that the exponent of
$B$ agrees with Conjecture \ref{c:b=a}, since $\al(S)=1$. 
Moreover the exponent of $\log B$ is at most $9-d$, since the geometric Picard group  has rank $10-d$.
We will develop some heuristics to support this power of $\log B$ in \S
\ref{s:further}. The value of the constant $c_{S,H}$ has also received
a conjectural interpretation  at the hands of Peyre \cite{MR1340296},
an interpretation that has been extended by Batyrev and
Tschinkel~\cite{b-t}, and  by Salberger~\cite{MR1679841}.

There are a number of refinements to Conjecture \ref{c:manin-ns} that
are currently emerging, which we will not have space to discuss here.
Some of these are discussed in more details in the author's survey
\cite[\S 2]{gauss}, for example.  One such refinement is that there should exist a
polynomial $P\in\R[x]$ of degree $\rho_S-1$,  and a real number $\delta>0$, such that
\begin{equation}\lab{c3}
\nub=B P(\log B) +O(B^{1-\delta}).
\end{equation}
One obviously expects the leading coefficient of $P$ to agree
with Peyre's prediction, but there has so far been rather little
investigation of the lower order terms. 
All of the del Pezzo surfaces that we have discussed so far have been
non-singular. In the following section we will meet some singular
ones.

\subsection{Degree $3$ surfaces}

The del Pezzo surfaces $S\subset \bfP^3$ of degree $3$ are 
the geometrically integral cubic surfaces in $\bfP^3$, which are not
ruled by lines.  In particular, this definition covers both singular and
non-singular del Pezzo surfaces of degree $3$.
Given such a surface $S$ defined over $\Q$, we may always find an absolutely
irreducible cubic form $C\in \Z[x_1,x_2,x_3,x_4]$ such that $S$ is
defined by the equation $C=0$.  In this section we will discuss
the Manin conjecture in the context of cubic surfaces.
Let us begin by considering the situation for non-singular cubic
surfaces, for which one takes $U\subset S$ to be the open subset formed by
deleting the famous $27$ lines.  Peyre and Tschinkel \cite{p-t1, p-t2} have provided 
ample numerical evidence for the validity of the Manin conjecture for
diagonal cubic surfaces. However we are still rather far away from
proving it for any single example.  The best upper bound available is 
\begin{equation}
  \label{eq:hh} 
\nub=O_{\varepsilon,S}(B^{4/3+\varepsilon}),
\end{equation}
due to Heath-Brown \cite{MR98h:11083}.  This applies when the surface
$S$ contains $3$ coplanar lines defined over $\Q$, and in particular
to the {\em Fermat cubic surface} 
$$
x_1^3+x_2^3=x_3^3+x_4^3.
$$
Heath-Brown \cite{hb-ast} has extended the bound \eqref{eq:hh}
to all non-singular cubic surfaces, subject to a natural conjecture
concerning the size of the rank of elliptic curves over $\Q$.

The problem of proving lower bounds is somewhat easier.  Under the
assumption that $S$ contains a pair of skew lines defined over $\Q$,
Slater and Swinnerton-Dyer \cite{s-swd} have shown that $\nub\gg_S
B(\log B)^{\rho_S-1}$, as predicted by the Manin conjecture.
This does not apply to the Fermat cubic surface, however,
since the only skew lines contained in this surface are defined over
$\Q(\sqrt{-3})$.

It turns out that much more can be said if one permits $S$ to contain 
isolated singularities.  For the remainder of this section let 
$S \subset \bfP^3$ be a geometrically integral cubic surface, which
has only isolated singularities and is 
not a cone. 
Then there exists a unique ``minimal desingularisation'' $\pi: \tS\rightarrow S$ of the
surface, which is just a sequence of blow-up maps, and furthermore, that the asymptotic formula \eqref{c1} is
still expected to hold, with $\rho_S$ now taken to be the rank of the Picard group
of $\tS$. As usual $U\subset S$ is obtained  by deleting all of the lines from $S$.
The classification of singular cubic surfaces $S$ is a
well-established subject, and can be traced back to the work of Cayley \cite{cayley} and Schl\"afli
\cite{cayley'} over a century ago. A contemporary classification of
singular cubic surfaces has since been given by Bruce and Wall
\cite{b-w}, over $\overline{\Q}$. 
Of course, if one is interested in a classification over the ground
field $\Q$, then many more singularity types can occur (see Lipman
\cite{lipman}, for example).
In Table \ref{t:class1}  we have provided a classification table of the $20$ singularity
types over $\overline{\Q}$, including the number of lines that each surface contains.
We will presently meet some explicit examples of cubic forms $C\in\Z[x_1,x_2,x_3,x_4]$
that typify some surface types. 

\begin{table}[!ht]
\begin{center}
\begin{tabular}{|c|c|c|}
\hline
type &  $\#$ lines & singularity \\
\hline
\hline
\texttt{i} &  $21$ & $\textbf{A}_1$\\
\texttt{ii} & $16$ & $2\textbf{A}_1$\\
\texttt{iii} & $15$ & $\textbf{A}_2$\\
\texttt{iv} & $12$ & $3\textbf{A}_1$ \\
\texttt{v} & $11$ & $\textbf{A}_1+\textbf{A}_2$\\
\texttt{vi} & $10$ & $\textbf{A}_3$ \\
\texttt{vii} & $9$& $4\textbf{A}_1$ \\
\texttt{viii} & $8$ & $2\textbf{A}_1+\textbf{A}_2$ \\
\texttt{ix} & $7$ & $\textbf{A}_1+\textbf{A}_3$ \\
\texttt{x} & $7$ & $2\textbf{A}_2$ \\
\texttt{xi} & $6$ & $\textbf{A}_4$ \\
\texttt{xii} & $6$ & $\textbf{D}_4$\\
\texttt{xiii} &$5$ & $2\textbf{A}_1+\textbf{A}_3$ \\
\texttt{xiv} & $5$ & $\textbf{A}_1+2\textbf{A}_2$ \\
\texttt{xv} & $4$ & $\textbf{A}_1+\textbf{A}_4$ \\
\texttt{xvi} & $3$ & $\textbf{A}_5$ \\
\texttt{xvii} & $3$ & $\textbf{D}_5$ \\
\texttt{xviii} & $3$ & $3\textbf{A}_2$ \\
\texttt{ix}  & $2$ & $\textbf{A}_1+\textbf{A}_5$ \\
\texttt{xx} & $1$ & $\textbf{E}_6$ \\
\hline
\end{tabular}
\end{center}
\caption{Classification (over $\overline{\Q}$) of singular del Pezzo
surfaces of degree $3$ in $\bfP^3$}
\label{t:class1}
\end{table}

The labelling of each singularity type corresponds to the
``Dynkin diagram'' that describes the intersection behaviour of the 
exceptional divisors obtained by resolving the
singularities in the surface. 
For example, consider the cubic surface
\begin{equation}
  \label{eq:e6}
S_1=\{x_1^2x_3+x_2x_3^2+x_4^3=0\}.
\end{equation}
Up to isomorphism over $\overline{\Q}$ this is the unique cubic
surface of type \texttt{xx} in the table, and is discussed further in \cite{MR2029868}.
The process of resolving the singularity gives $6$
exceptional divisors $E_1,\ldots,E_6$ and produces the 
minimal desingularisation $\tS_1$ of the surface $S_1$.
If $L$ denotes the strict transform of the unique line on $S_1$, then 
$L,E_1,\ldots,E_6$ satisfy the intersection behaviour
encoded in the Dynkin diagram
$$
\xymatrix{
      & & E_2 \ar@{-}[d] \\
E_1 \ar@{-}[r] & E_3 \ar@{-}[r] & E_6 \ar@{-}[r] & E_5 \ar@{-}[r] & E_4
\ar@{-}[r] & L }
$$
There is a line connecting two divisors in this diagram if and only if they meet in
$\tS_1$.  In what follows the reader can simply think of these
Dynkin diagrams as a convenient way to label the surface type.

It turns out, as discussed in \cite{b-w},  that some types of surfaces
do not have a single normal form,
but an infinite family.  This happens precisely for the surfaces of type
\texttt{i}, \texttt{ii}, \texttt{iii}, \texttt{iv}, \texttt{v},
\texttt{vi} and \texttt{ix}. By \cite[Lemma 4]{b-w} the type \texttt{xii}
surface, with a $\Dfour$ singularity, is the only surface that has
more than one normal form, but not a family.  In fact it has precisely
two normal forms, given by 
\begin{equation}
  \label{eq:S2}
S_2=\{x_1x_2(x_1+x_2)+x_4(x_1+x_2+x_3)^2=0\}
\end{equation}
and 
\begin{equation}
  \label{eq:d4s3}
S_3=\{x_1x_2x_3+x_4(x_1+x_2+x_3)^2=0\}.
\end{equation}
That these equations actually define distinct surfaces can be seen by calculating the
corresponding Hessians in each case.

Let $\tS$ denote the minimal desingularisation of any surface $S$ from
Table~\ref{t:class1}, and assume that all of its singularities and
lines are defined over $\Q$. In this case the surface is said to be
{\em split}, and it follows that the Picard
group of  $\tS$ has maximal rank $7$ by \eqref{eq:gPic}, since $\Pic(\tS)=\Pic_{\cQ}(\tS)$. 
For example, $[L],[E_1],\ldots,[E_6]$ provide a basis for $\Pic(\tS_1)$.
One would like to try and establish \eqref{c1} for each such surface
$S$, with $\rho_S=7$.  Several del Pezzo surfaces are actually special cases of
varieties for which the Manin conjecture is already known to hold.  
Recall that a variety of dimension $D$ is said to be {\em toric} if it contains 
the algebraic group variety $\mathbb{G}_m^{D}$ as a dense open subset, 
whose natural action on itself extends to all of the variety.
The Manin conjecture has been established for all 
toric varieties by  Batyrev and Tschinkel \cite{MR1620682}.
It can be checked that the 
surface representing type \texttt{xviii} is toric.  
In fact this particular surface has been studied by numerous
authors, including la Bret\`eche \cite{MR2000b:11074},
la Bret\`eche and Swinnerton-Dyer \cite{b-swd},
 Fouvry \cite{MR2000b:11075}, 
Heath-Brown and Moroz \cite{MR2000f:11080}, and Salberger \cite{MR1679841}.
Of the unconditional asymptotic formulae obtained, the most impressive is the first.
This consists of an estimate like \eqref{c3} for any $\delta \in (0,1/8)$, with
$\deg P=6$.  

The next surface to have received serious attention is the {\em Cayley
cubic surface} 
$$
S_4=\{x_1x_2x_3+x_1x_2x_4+x_1x_3x_4+x_2x_3x_4=0\},
$$
which is the type \texttt{vii} surface in the table.
Heath-Brown \cite{hb-cayley} has shown that
there exist absolute constants $A_1,A_2>0$ such that 
$$
A_1 B (\log B)^6 \leq \nub \leq A_2 B (\log B)^6.
$$
An estimate of precisely the same form has been obtained by the author
\cite{d4} for the $\Dfour$ surface $S_3$ in \eqref{eq:d4s3}.
In both cases the lines in the surface are all defined over $\Q$, so that the
surfaces are split. Thus the corresponding Picard groups have rank $7$
and the exponents of $B$ and $\log B$ agree with Manin's
prediction. In this set of lecture notes we will establish an upper
bound for the remaining $\Dfour$ cubic surface $S_2$ in \eqref{eq:S2}.  
This will be carried out in \S \ref{s:d4_cubic} in two basic
attacks. First we will
give a completely self-contained account of  the upper bound
$\nub=O_\ve(B^{1+\ve})$, for any $\ve>0$. Next, by making use of the
work in \cite{d4}, we will establish the following finer result. 

\begin{thm}\label{main-d4}
Let $S_2$ be given by \eqref{eq:S2}. 
We have $\nub \ll B (\log B)^6.$
\end{thm}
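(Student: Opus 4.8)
The plan is to reduce the estimation of $\nub$, via the explicit parametrisation of \S\ref{s:d4_cubic}, to a lattice-point counting problem close enough to the one solved in \cite{d4} for the companion $\Dfour$ cubic surface $S_3$ of \eqref{eq:d4s3} that the bound $O(B(\log B)^6)$ can be extracted from that work. First I would invoke the parametrisation of $U(\Q)$ obtained in \S\ref{s:d4_cubic}: projection away from the $\Dfour$ singular point $[0:0:0:1]$ defines a birational map $S_2 \dashrightarrow \PP^2$, under which a point of $U(\Q)$ of height at most $B$ corresponds to a primitive vector $\xx = (x_1,x_2,x_3)\in\Z^3$ with $x_1x_2(x_1+x_2)L\neq 0$, where $L=x_1+x_2+x_3$, subject to
$$
\max\bigl\{L^2|\xx|,\ |x_1x_2(x_1+x_2)|\bigr\}\leq B\cdot\hcf\bigl(L^2,\,x_1x_2(x_1+x_2)\bigr),
$$
the correspondence being two-to-one once $\pm\xx$ are identified. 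Thus $\nub$ equals, up to $O(1)$ and a factor $\tfrac12$, the number of such vectors, and it suffices to bound this quantity by $O(B(\log B)^6)$.

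The second step is a descent. For any prime $p$ dividing $\hcf(L^2,x_1x_2(x_1+x_2))$ one has $p\mid L$, and primitivity of $\xx$ forces exactly one of $x_1$, $x_2$, $x_1+x_2$ to be divisible by $p$ --- the three alternatives matching the three lines of $S_2$ through the singularity. Hence $\hcf(L^2,x_1x_2(x_1+x_2))$ factors as $k_1k_2k_3$ with $k_1\mid x_1$, $k_2\mid x_2$, $k_3\mid x_1+x_2$, the $k_i$ pairwise coprime and each dividing $L^2$; writing $x_1=k_1y_1$, $x_2=k_2y_2$ and tracking the induced divisibility and coprimality conditions (in particular $k_3\mid k_1y_1+k_2y_2$) turns the problem into a count over a bounded number of integer variables subject to finitely many coprimality constraints and to the region cut out by the height inequality. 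This is the universal-torsor parametrisation of $\tS_2$, and it has precisely the shape of the one treated in \cite{d4} for $S_3$, the sole difference being that the monomial $x_1x_2x_3$ there --- a product of three linearly independent linear forms --- is here replaced by $x_1x_2(x_1+x_2)$, a product of three linear forms among which there is one linear relation.

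It then remains to check that this modification costs nothing. The bound $\asymp B(\log B)^6$ in \cite{d4} comes from summing over the torsor variables one at a time, the six powers of $\log B$ being produced by six divisor-type summations, and the last step being a two-dimensional lattice-point count in a convex region of controlled area; none of the divisor summations disappears when one of the three linear forms becomes dependent on the other two, since the relevant divisor still ranges over divisors of $x_1+x_2$ in place of $x_3$. The main obstacle will be the two points at which the linear relation genuinely intervenes. First, the summation region now degenerates along $x_1+x_2=0$ rather than along a coordinate hyperplane, so one must confirm that the contribution of vectors with $|x_1+x_2|$ small --- those approaching the excluded line $\{x_1+x_2=x_4=0\}$ --- is still $O(B(\log B)^6)$. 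Secondly, the congruence $k_3\mid k_1y_1+k_2y_2$ and the relations $\hcf(x_1,x_1+x_2)=\hcf(x_1,x_2)$, and so forth, must be threaded through the bookkeeping of \cite{d4}, in which $x_1,x_2,x_3$ enter symmetrically as independent variables. Once these are dealt with, the estimates of \cite{d4} go through and deliver $\nub\ll B(\log B)^6$.
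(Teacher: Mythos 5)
Your opening step---projection away from $[0,0,0,1]$ and the height inequality
$$
\max\bigl\{L^2|\xx|,\ |x_1x_2(x_1+x_2)|\bigr\}\ \leq\ B\cdot\hcf\bigl(L^2,\ x_1x_2(x_1+x_2)\bigr),\qquad L=x_1+x_2+x_3,
$$
is correct, and so is the observation that each prime $p\mid\hcf(L^2,x_1x_2(x_1+x_2))$ divides exactly one of $x_1$, $x_2$, $x_1+x_2$. The paper reaches an equivalent starting point but by a slightly different route: it first applies the unimodular change $t_1=x_1$, $t_2=x_2$, $t_3=x_1+x_2+x_3$, $t_4=-x_4$, so that the surface becomes $t_1t_2(t_1+t_2)=t_3^2t_4$ and all six lines are moved onto coordinate loci; this sidesteps precisely the difficulty you flag about the region degenerating along $x_1+x_2=0$.

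The genuine gap lies in the second half. Your descent stops at $x_1=k_1y_1$, $x_2=k_2y_2$, $k_3\mid k_1y_1+k_2y_2$, and then asserts that this ``is the universal-torsor parametrisation of $\tS_2$'' and that the bound from \cite{d4} transfers. Neither claim is established. The torsor of $\tS_2$ is a hypersurface in $\A^{10}$ with equation $s_1u_1y_1^2+s_2u_2y_2^2+s_3u_3y_3^2=0$; getting from your intermediate variables $(k_1,k_2,k_3,y_1,y_2)$ to that ten-variable parametrisation requires extracting several further layers of gcd's and square parts (in the paper, via the observation that $\eta_{12}^3\mid t_3^2$ forces $\eta_{12}=uv^2$ with $uv\mid t_3$, and similar steps), and the coprimality bookkeeping in Lemma~\ref{lem:2111.5} is not a formality. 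More importantly, once the torsor is in hand, the paper does \emph{not} simply invoke the $S_3$ estimates. It imports Lemma~\ref{M1-x} from \cite{d4} for the conic-fibration count, but must prove a separate Lemma~\ref{M2-x} (a congruence argument combined with Lemma~\ref{p:line}) to cover the dyadic ranges where $S_1S_2$ is much smaller than $\min\{S_3,U\}$; the final bound $O(B(\log B)^6)$ comes out of a delicate case split between the two lemmas according to which of $S_1S_2$, $S_3$, $U$ dominates. Your sentence ``Once these are dealt with, the estimates of \cite{d4} go through'' names the right worries---the congruence $k_3\mid k_1y_1+k_2y_2$ and the degeneration near $x_1+x_2=0$---but does not resolve them, and the interplay between the two geometry-of-numbers bounds is the actual content of the proof rather than a routine transfer.
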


The cubic surface $S_2$ contains the unique singular point
$[0,0,0,1]$, together with the $6$ lines
\begin{equation}
  \label{eq:d4_lines}
x_i=x_4=0, \quad x_1+x_2=x_j=0,\quad 
x_i=x_1+x_2+x_3=0,
\end{equation}
for distinct indices $i\in\{1,2\}$ and $j\in\{3,4\}$.
Thus the surface is split and it follows that Theorem \ref{main-d4}
agrees with the Manin conjecture.

\begin{ex}
Check that \eqref{eq:d4_lines} are all of the lines contained in $S_2$. 
\end{ex}

The final surface to have been studied extensively is the $\Esix$
cubic surface $S_1$ that we discussed above.  
The figure below, which was constructed by Derenthal, shows all  the rational points
of height $\leq 1000$ on this surface. Recent
\begin{wrapfigure}{l}{7.9cm}
   \includegraphics[scale=0.26]{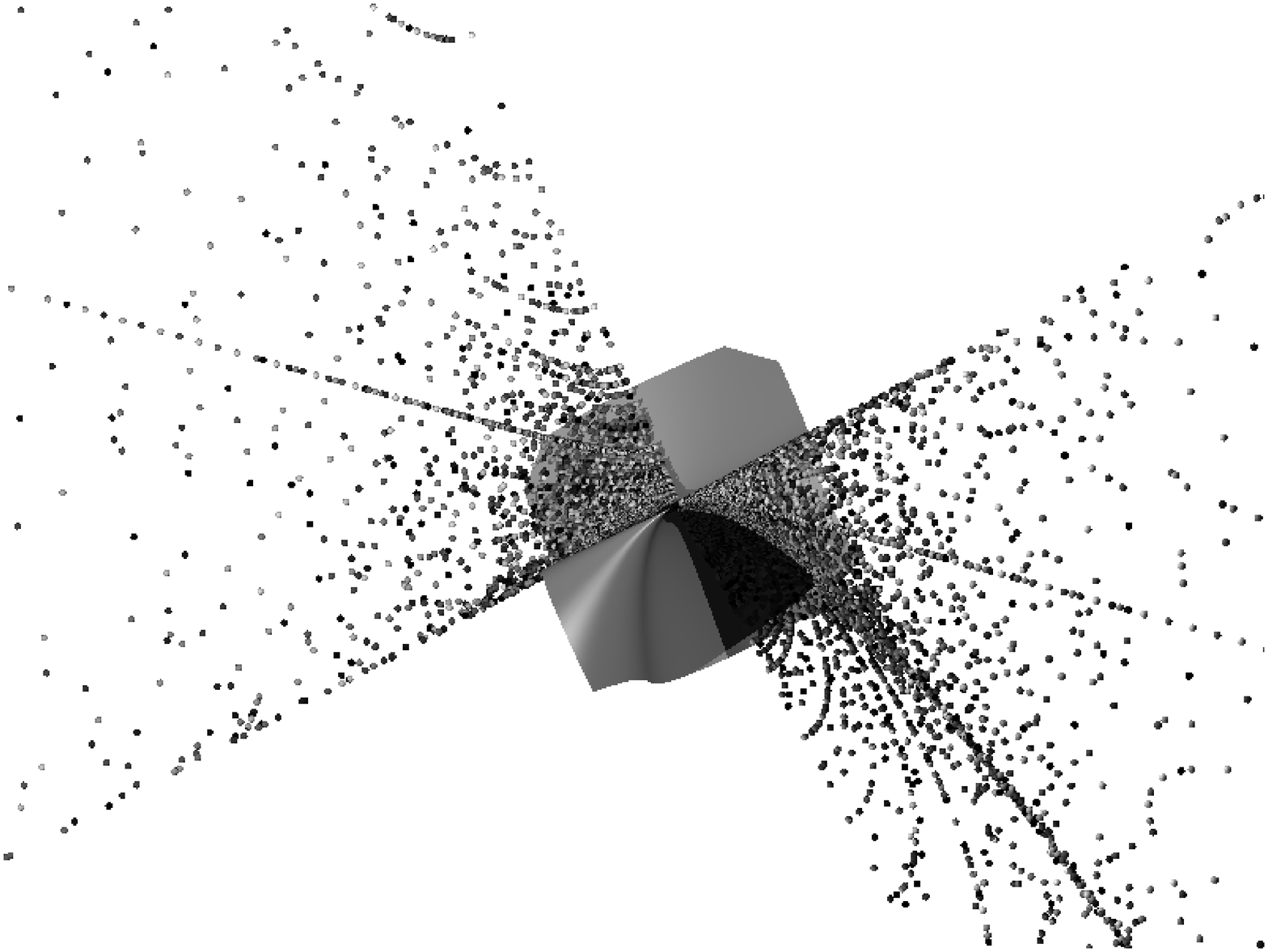}
\end{wrapfigure}
joint work of the
author with la Bret\`eche and Derenthal \cite{e6} has succeeded in
establishing the Manin conjecture for this surface. In fact an 
asymptotic formula of the shape \eqref{c3} is achieved, with 
$P$ of degree $6$ and any $\delta \in (0,1/11)$. 
It should be remarked that Dr. Michael Joyce has 
also established the Manin conjecture 
for $S_1$ in his doctoral thesis, albeit with
a weaker error term of  $O(B (\log B)^5)$.

\subsection{Degree $4$ surfaces}

A quartic del Pezzo surface $S \subset \bfP^4$, that is defined over $\Q$, can be
recognised as the zero locus of a suitable pair of quadratic forms 
$Q_1,Q_2 \in \Z[x_1,\ldots,x_5]$.  Again we do not stipulate that the
surface should be non-singular. 
As usual let  $U\subset S$
denote the open subset formed by deleting all of the lines from $S$.
Let us begin by discussing the situation for non-singular surfaces,
where there are $16$ lines to delete. The best result available is the estimate
$
\nub=O_{\ve,S}(B^{1+\ve}),
$
valid for any quartic non-singular del Pezzo surface
$S\subset \bfP^4$ containing a conic defined over $\Q$. 
This result was established in an unpublished note
due to Salberger in 2001.
It would be interesting to see whether one could adapt the methods of 
\cite{hb-ast} to show that $\nub =O_{\ve,S}(B^{5/4+\ve})$ for {\em
  any} non-singular del Pezzo surface of degree $4$, assuming 
the same hypothesis on the ranks of elliptic curves.

As previously, it emerges that much more can be said if one permits $S$ to contain 
isolated singularities.  For the remainder of this section let 
$S \subset \bfP^4$ be a geometrically integral intersection of two
quadric hypersurfaces, which has only isolated singularities and is
not a cone, and let $\tS$ be the minimal desingularisation of $S$. Then the asymptotic formula \eqref{c1} is
still expected to hold, with $\rho_S$ now taken to be the rank of the Picard group
of $\tS$, and $U\subset S$ obtained  by deleting all of the lines from $S$.
In particular, when $S$ is split one always has $\rho_S=6$. 
The classification of singular quartic del Pezzo surfaces can be
extracted from the work of Hodge and Pedoe \cite[Book IV, \S
XIII.11]{h-p}, where it is phrased in terms of the so-called Segre
symbol.  The {\em Segre symbol} of a matrix $\ma{M}\in M_5(\C)$ is defined as follows. If the Jordan
form of $\ma{M}$ has Jordan blocks of sizes $a_1,\ldots,a_n$, with $a_1+\cdots+a_n=5$, then the Segre
symbol is the symbol 
$$
(a_1,\ldots,a_n)
$$ 
with extra parentheses around the Jordan blocks with
equal eigenvalues.  Suppose that our quartic del Pezzo surface $S$ is
defined by a pair of quadric hypersurfaces, with underlying symmetric matrices
$\ma{A},\ma{B}\in M_5(\Q)$. Then the Segre symbol of $S$ is defined to be the
Segre symbol associated to $\ma{A}^{-1}\ma{B}$. A crucial property of
the Segre symbol is that it does not depend on the choice of $\ma{A}$
and $\ma{B}$ in the pencil of quadrics defining $S$.  
Since we are assuming that $S$ is not a cone, one may always suppose
that $\ma{A},\ma{B}$ are chosen so that $\ma{A}$ has full rank.

To illustrate the calculation of the Segre symbol, let us consider the
surface $S$ defined by the pair of equations
\begin{equation}
  \label{eq:rom_walk}
x_1x_2+x_3x_4=0,\quad x_1x_4+x_2x_3+x_3x_5+x_4x_5=0.
\end{equation}
Let $\ma{A},\ma{B}\in M_5(\Q)$ denote the underlying matrices of the
first and second equations, respectively. Then $\ma{A}$ has rank 
$4$, and so we
replace it with $\ma{A}+2\ma{B}$, which has full rank. A simple
calculation reveals that the matrix 
$(\ma{A}+2\ma{B})^{-1}\ma{B}$ has Jordan form
$$
\ma{J}=\left(
\begin{array}{ccccc}
0&0&0&0&0\\
0&1&0&0&0\\
0&0&\frac{1}{3}&0&0\\
0&0&0&\frac{1}{2}&1\\
0&0&0&0&\frac{1}{2}
\end{array}
\right).
$$
This matrix has $4$ Jordan blocks, one of size $2$ and the rest of
size $1$. The eigenvalues associated to the different Jordan blocks
are all different, and so it follows that the surface \eqref{eq:rom_walk} has Segre symbol
$
(2,1,1,1).
$

\begin{ex}
Find any matrices $\ma{A},\ma{B}\in M_5(\Q)$ so that the corresponding
surface $\x^t \ma{A}\x=\x^t \ma{B}\x=0$ is non-singular. Show that the
surface has Segre symbol $(1,1,1,1,1)$.
\end{ex}

So far we have given a very easy way to check the isomorphism type of a
given singular del Pezzo surface of degree $4$. How do we match this
up with a classification according to the singularity type, as in our discussion of
cubic surfaces in Table \ref{t:class1}?
It turns out that up to isomorphism over $\overline{\Q}$, there are
$15$ possible singularity types for $S$.
Over $\overline{\Q}$, Coray and Tsfasman  \cite[Proposition 6.1]{c-t} have calculated the
extended Dynkin diagrams for all of the $15$ types, and Kn\"orrer
\cite{knorrer} has determined the precise correspondence between the singularity type and
the Segre symbol. Table \ref{t:class2} is extracted from
this body of work, and matches each possible singularity type with 
the Segre symbol, and the number of lines that the surfaces contains.

\begin{table}[!ht]
\begin{center}
\begin{tabular}{|c|c|c|c|c|}
\hline
type & Segre symbol & $\#$ lines & singularity \\
\hline
\hline
\texttt{i} & (2,1,1,1) & $12$ & $\textbf{A}_1$\\
\texttt{ii} & (2,2,1)& $9$ & $2\textbf{A}_1$\\
\texttt{iii} & ((1,1),1,1,1) & $8$ & $2\textbf{A}_1$\\
\texttt{iv} & (3,1,1) & $8$ & $\textbf{A}_2$\\
\texttt{v} & ((1,1),2,1)   & $6$ & $3\textbf{A}_1$ \\
\texttt{vi} & (3,2)  & $6$ & $\textbf{A}_1+\textbf{A}_2$ \\
\texttt{vii} & (4,1) & $5$& $\textbf{A}_3$ \\
\texttt{viii} & ((2,1),1,1)  & $4$& $\textbf{A}_3$ \\
\texttt{ix} & ((1,1),(1,1),1)  & $4$ & $4\textbf{A}_1$ \\
\texttt{x} & ((1,1),3) & $4$ & $2\textbf{A}_1+\textbf{A}_2$ \\
\texttt{xi} & ((2,1),2) & $3$ & $\textbf{A}_1+\textbf{A}_3$ \\
\texttt{xii} &(5)  & $3$ & $\textbf{A}_4$ \\
\texttt{xiii} & ((3,1),1) & $2$ & $\textbf{D}_4$ \\
\texttt{xiv} & ((2,1),(1,1))  & $2$ & $2\textbf{A}_1+\textbf{A}_3$ \\
\texttt{xv} & ((4,1)) & $1$ & $\textbf{D}_5$ \\
\hline
\end{tabular}
\end{center}
\caption{Classification (over $\overline{\Q}$) of singular del Pezzo
  surfaces of degree $4$ in $\bfP^4$}
\label{t:class2}
\end{table}

In general, given a particular Segre symbol, its not entirely
straightforward to determine explicit equations that define a singular del Pezzo
surface of degree $4$ having this symbol. Nonetheless in Table \ref{t:class2'} we
have done precisely this for each Segre symbol that
occurs. In doing so we have retrieved some of the calculations
carried out by Derenthal \cite{der1}. An important feature of the table is that the surfaces
recorded are split over $\Q$. It remains a
significant open challenge to establish the Manin conjecture for the $15$
surfaces given in Table \ref{t:class2'}. This will furnish a proof of
the Manin conjecture for the class of split singular del Pezzo
surfaces of degree $4$ that are defined over $\Q$, and is undoubtedly a
key stepping stone on the way towards a resolution of the conjecture for
all del Pezzo surfaces. There is huge potential for further work in
this area, and I hope that these lecture notes succeed in showing that analytic number
theorists are well placed to make an important contribution.

\begin{table}[!ht]
\begin{center}
\begin{tabular}{|c|c|c|}
\hline
type & $Q_1(\x)$ & $Q_2(\x)$  \\
\hline
\hline
\texttt{i} & $x_1x_2-x_3x_4$ & $x_1x_4-x_2x_3+x_3x_5+x_4x_5$\\
\texttt{ii} & $x_1x_2-x_3x_4$ & $x_1x_4-x_2x_3+x_3x_5+x_5^2$\\
\texttt{iii} & $x_1x_2-x_3^2$ & $x_1x_3-x_2x_3+x_4x_5$\\
\texttt{iv} & $x_1x_2-x_3x_4$ & $(x_1+x_2+x_3+x_4)x_5-x_3x_4$\\
\texttt{v} & $x_1x_2-x_3^2$ & $x_2x_3+x_3^2 +x_4x_5$\\
\texttt{vi} & $x_1x_2-x_3x_4$ & $x_1x_5+x_2x_3+x_4x_5$  \\
\texttt{vii} & $x_1x_2-x_3x_4$ &  $x_1x_4+x_2x_4+x_3x_5$\\
\texttt{viii} & $x_1x_4-(x_2-x_3)x_5$  & $(x_1+x_4)(x_2+x_3)+x_2x_3$\\
\texttt{ix} & $x_1x_2-x_3^2$ & $x_3^2-x_4x_5$\\
\texttt{x} & $x_1x_2-x_3^2$ & $x_2x_3-x_4x_5$  \\
\texttt{xi} & $x_1x_4-x_3x_5$ & $x_1x_2+x_2x_4+x_3^2$ \\
\texttt{xii} & $x_1x_2-x_3x_4$ & $x_1x_5+x_2x_3+x_4^2$\\
\texttt{xiii} & $x_1x_4-x_2x_5$ & $x_1x_2+x_2x_4+x_3^2$  \\
\texttt{xiv} & $x_1x_2-x_3^2$ & $x_1^2-x_4x_5$  \\
\texttt{xv} & $x_1x_2-x_3^2$  &$x_1x_5+x_2x_3+x_4^2$  \\
\hline
\end{tabular}
\end{center}
\caption{Split surfaces representing the $15$ singularity types}
\label{t:class2'}
\end{table}

\begin{ex}
Calculate the Segre symbol for each of the surfaces in 
Table~\ref{t:class2'}, and check they match up with the correct
singularity type in Table~\ref{t:class2}.
\end{ex}

Whereas they share the same singularity type,
the surfaces of type \texttt{vii} and 
\texttt{viii} differ because in the former there are $5$
lines, $4$ of which pass through the singularity, whereas in the
latter all $4$ lines pass through the singularity.
Similarly, an important difference between the surfaces
of type \texttt{ii} and \texttt{iii} in Tables~\ref{t:class2} and
\ref{t:class2'} is that for the surface of type \texttt{ii}, the line
joining the two singularities is contained in the surface, whereas for
the surface of type \texttt{iii} it is not. 
When the $2$ singular points are defined over a quadratic
extension of $\Q$, the latter surface is called an {\em Iskovskih
surface}. There is ample evidence available (see Coray and Tsfasman
\cite{c-t}, for example) to the effect that
Iskovskih surfaces are the most arithmetically interesting surfaces
among the singular del Pezzo surfaces of degree $4$. In fact they are
the only such surfaces for which the Hasse principle can fail to hold.
The main focus of these lecture notes is upon the situation for split
singular del Pezzo surfaces, and so we will say no more about
Iskovskih surfaces here.

As usual, let $\tS$ denote the minimal desingularisation of any surface $S$ from
Table~\ref{t:class2'}. Then the Picard group of $\tS$ has rank 
$\rho_S=6$.  The goal recorded above is to try and establish \eqref{c1} for each
$S$.  As in the case of singular cubic surfaces several of the surfaces are actually special cases of
varieties for which the Manin conjecture is already known to hold.  
Thus it can be shown that the surfaces representing types \texttt{ix}, \texttt{x}, \texttt{xiv} are
all toric, so that \eqref{c1} already holds in these cases by the work
of Batyrev and Tschinkel \cite{MR1620682}.  In a very real 
sense these surfaces are the ``easiest'' to deal with in our list.

\begin{ex} 
Show that $\nub=O_{\ve}(B^{1+\ve})$ for the surfaces of
type \texttt{ix}, \texttt{x} and \texttt{xiv}.
\end{ex}

It has also been shown by  Chambert-Loir and Tschinkel \cite{ct} that the Manin conjecture
is true for equivariant compactifications of the algebraic group $\mathbb{G}_a^2$.
Although identifying such surfaces in the table is not entirely routine,
it transpires that the type \texttt{xv} surface (with a $\Dfive$
singularity) is covered by this work. 
In joint work with la Bret\`eche, the author \cite{dp4-d5} has
provided an independent proof of the Manin conjecture for this
particular surface. In addition to obtaining a finer asymptotic
formula of the shape given in \eqref{c3}, this work has provided a useful line of attack for several
other singular del Pezzo surfaces.

In Table \ref{t:class2''} we have recorded
a list of progress towards the final resolution of the Manin
conjecture for the split singular del Pezzo surfaces of degree $4$. We
have included the relevant
reference in the literature, and whether the result attained amounts
to an asymptotic formula for the counting function, or an upper
bound. We will not pay attention here to the quality of the error
term in the asymptotic formula, but each upper bound
is of the correct order of magnitude $B(\log B)^5$.
There is still plenty left to do!

\begin{table}[!ht]
\begin{center}
\begin{tabular}{|c|c|}
\hline
type & type of estimate achieved  \\
\hline
\hline
\texttt{v} & upper bound \cite{gauss}\\
\texttt{ix} & asymptotic formula \cite{MR1620682}\\
\texttt{x} & asymptotic formula \cite{MR1620682}\\
\texttt{xiii}  & asymptotic formula \cite{der+tsch}\\
\texttt{xiv}  & asymptotic formula \cite{MR1620682}\\
\texttt{xv} & asymptotic formula \cite{dp4-d5}\\
\hline
\end{tabular}
\end{center}
\caption{Summary of progress for the split singular del Pezzo surfaces
  of degree $4$}
\label{t:class2''}
\end{table}

It is also interesting to try and establish the Manin conjecture for
singular del Pezzo surfaces of degree $4$ that are not split over the
ground field.  In further joint work of the author with la Bret\`eche
\cite{dp4-d4}, the Manin conjecture is established for the  surface 
$$
x_1x_2-x_3^2=0,\quad x_1^2+x_2x_5+x_4^2 = 0.
$$
This surface has a $\Dfour$ singularity and is isomorphic over $\Q(i)$ to the 
surface of type \texttt{xiii} in Table \ref{t:class2'}.
The Picard group of $\tS$ has rank $4$ in this case, and an asymptotic
formula of the shape \eqref{c3} is obtained 
for any $\delta \in (0,3/32)$, with $P$ a polynomial of degree $3$.

\subsection{Degree $\geq 5$ surfaces}\label{s:dp5}

It turns out that all del Pezzo surfaces of degree $d\geq 7$ are toric
\cite[Proposition 8]{der1}, and that all non-singular del Pezzo
surfaces of degree $d\geq 6$ are toric. Thus \eqref{c1} already holds
in these cases by the work of Batyrev and Tschinkel \cite{MR1620682}. 
For non-singular del Pezzo surfaces $S \subset \bfP^5$ of degree~$5$,
the situation is rather less satisfactory.  In fact there are very few
instances for which the Manin conjecture has been
established.  The most significant of these is due to la Bret\`eche
\cite{MR2003m:14033}, who has proved the conjecture for the split
non-singular del Pezzo surface $S$ of degree $5$, in which the $10$
lines are all defined over $\Q$.  To be precise, 
if $U\subset S$ denotes the open subset formed
by deleting the lines from $S$, then 
la Bret\`eche shows that
$$
N_{U}(B) = c_{0}B (\log B)^{4}\Big(1+
O\Big(\frac{1}{\log\log B}\Big)\Big),
$$
for a certain constant $c_0>0$. This confirms Conjecture
\ref{c:manin-ns}, since we have seen in \eqref{eq:gPic} that
$\Pic (S)\cong \Z^5$ for split non-singular del Pezzo surfaces of
degree $5$.  The other 
major achievement in the setting of quintic del Pezzo surfaces is a
result of la Bret\`eche and Fouvry \cite{MR2099200}, where the 
Manin conjecture is established for a surface that is not split, but
contains lines defined over $\Q(i)$.

So far we have only discussed the situation for non-singular
del Pezzo surfaces of degree $d\geq 5$. Let us now turn to the
singular setting. When $d=6$ it emerges that there exist such surfaces
that are not toric, and so are not covered by \cite{MR1620682}. 
We will focus attention on the situation for del
Pezzo surfaces of degree $6$, following the investigation of Derenthal \cite{der1},
where the degree $5$ surfaces are also considered.
In view of \cite[Proposition 8.3]{c-t}, Table~\ref{t:class_deg6}
lists all possible types of singular del Pezzo surfaces
of degree $6$.

\begin{table}[!ht]
\begin{center}
\begin{tabular}{|c|c|c|}
\hline
type &  $\#$ lines & singularity \\
\hline
\hline
\texttt{i} &  $4$ & $\textbf{A}_1$\\
\texttt{ii} & $3$ & $\textbf{A}_1$\\
\texttt{iii} & $2$ & $2\textbf{A}_1$\\
\texttt{iv} & $2$ & $\textbf{A}_2$ \\
\texttt{v} & $1$ & $\textbf{A}_1+\textbf{A}_2$\\
\hline
\end{tabular}
\end{center}
\caption{Classification (over $\overline{\Q}$) of singular del Pezzo
surfaces of degree $6$}
\label{t:class_deg6}
\end{table}

As noted in \cite[\S 5]{der1}, the surfaces of type \texttt{i},
\texttt{iii} and \texttt{v} are all toric and so do not interest
us here. 
Any singular del Pezzo surface of degree $6$ can be realised as the
intersection of $9$ quadrics in $\bfP^6$.  For example, the type
\texttt{iv} surface is cut out by the system of equations
\begin{equation}\label{eq:a2}
\begin{split}
x_1x_6-x_4x_5&=x_1x_7-x_2x_5=x_1x_7-x_3x_4=x_3x_7+x_4x_5+x_5^2\\
&=x_5x_7-x_3x_4=x_2x_7+x_4^2+x_4x_5=x_4x_7-x_2x_6\\
&=x_4x_6+x_5x_6+x_7^2=x_2x_3-x_1x_4+x_1x_5=0.
\end{split}
\end{equation}
In this set of lecture notes we will establish the Manin
conjecture for the type \texttt{ii} surface, which has the simplest
possible singularity.
When $S\subset \bfP^6$ is a split surface of type \texttt{ii}, then
there is unimodular change of variables that takes $S$ into the
surface with equations 
\begin{equation}\label{eq:a1}
\begin{split}
x_1^2-x_2x_4&=x_1x_5-x_3x_4=x_1x_3-x_2x_5=x_1x_6-x_3x_5\\
&=x_2x_6-x_3^2=x_4x_6-x_5^2=x_1^2+x_1x_4+x_5x_7\\
&=x_1x_2+x_1^2+x_3x_7=x_1x_3+x_1x_5+x_6x_7=0.
\end{split}
\end{equation}
Let $\tS$ denote the minimal desingularisation of $S$. It follows
from \eqref{eq:gPic} that $\Pic(\tS)\cong \Z^4$ since $S$ is
split.
We will establish the following result in \S \ref{s:a1}.

\begin{thm}\label{t:a1}
Let $S\subset \bfP^6$ be the $\textbf{A}_1$ surface given by
\eqref{eq:a1}. Then there exist constants $c_1,c_2\geq 0$ such that 
$$
N_{U}(B)=c_1B (\log B)^3 +c_2B(\log B)^2  +O\big(B\log B \big),
$$
where
$$
c_1=\frac{\sigma_\infty}{144}\prod_p \Big(1-\frac{1}{p}  \Big)^4\Big(1+\frac{4}{p}+\frac{1}{p^2}\Big)
$$
and 
\begin{equation}
  \label{eq:a1-siginf}
\sigma_\infty=6\int_{\{u,t,v\in\R : ~0<u, ut^2, uv^2, |tv(t-v)|\leq 1\}}\d t\d u\d v.
\end{equation}
\end{thm}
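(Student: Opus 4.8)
The plan is to establish Theorem~\ref{t:a1} by passing to a universal-torsor parametrisation of the rational points on $U$ and then estimating the resulting lattice-point counting problem. First I would use the explicit equations \eqref{eq:a1} to identify the Cox ring of the minimal desingularisation $\tS$. Since $\Pic(\tS)\cong\Z^4$, the torsor lives over a base with $7$ homogeneous coordinates (one for the exceptional curve $E$ of the $\textbf{A}_1$ singularity, and one for each of the $3$ lines, plus the remaining generators forced by the structure of a degree-$6$ split del Pezzo surface), subject to a single torsor equation of the type $\eta_1\eta_2^2 = \eta_3\eta_4\eta_5 + \cdots$ reflecting the one relation that cannot be absorbed. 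Concretely, a standard descent argument (in the spirit of la Bret\`eche's treatment of quintic del Pezzo surfaces \cite{MR2003m:14033} and the author's earlier work \cite{d4}) shows that
$$
N_U(B) = \#\Big\{ \text{primitive integral points on the torsor with coprimality and height constraints} \Big\},
$$
where the height $H(x)\leq B$ translates into the conditions appearing in \eqref{eq:a1-siginf}, namely bounds on monomials $u$, $ut^2$, $uv^2$ and $|tv(t-v)|$ after a suitable renaming of variables; the factor $6$ and the division by $144$ will emerge from the sizes of the relevant automorphism/unit groups and from the normalisation of the torsor.

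Next I would carry out the lattice-point count itself. After fixing all but a few of the torsor variables, the torsor equation becomes a congruence condition or an equation that can be solved in one variable in terms of the others, and one sums over the remaining variables in the region carved out by the height conditions. The main term is extracted by replacing the innermost sums by volumes: the exponent of $\log B$ is $\rho_S - 1 = 3$ as predicted by the Manin conjecture, and the secondary term $c_2 B(\log B)^2$ arises from the boundary contributions of these successive summations. I would handle the coprimality conditions via M\"obius inversion (Exercise~\ref{e:mob}), which produces the Euler product
$$
\prod_p \Big(1-\frac1p\Big)^4\Big(1+\frac4p+\frac1{p^2}\Big)
$$
as the arithmetic factor in $c_1$; the shape $(1-1/p)^4$ matches $(1-1/p)^{\rho_S}$, consistent with Peyre's conjectural constant. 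The archimedean density $\sigma_\infty$ is then precisely the volume of the region in \eqref{eq:a1-siginf}, obtained by letting the outer (real) variables range freely subject to the height inequalities.

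The hard part will be controlling the error terms accumulated in the nested summations, so as to keep the error down to $O(B\log B)$ rather than something like $O(B(\log B)^2)$; this requires careful treatment of the ranges where one or more torsor variables is small, and likely an appeal to nontrivial divisor-sum or exponential-sum estimates — this is exactly where the input from \cite{d4} (and the divisor-correlation technology used there) will be needed. A secondary technical obstacle is verifying that the torsor parametrisation is exact: one must check that every primitive point on $U$ lifts uniquely (up to the action of the relevant finite group) to a point on the torsor satisfying all the coprimality conditions, and conversely that no spurious solutions are introduced. Once the parametrisation is pinned down and the main term is isolated, the remaining work is a (lengthy but routine) analytic estimation; I would organise \S\ref{s:a1} so that the geometry/descent is done first, the main-term volume computation second, and the error-term bookkeeping last.
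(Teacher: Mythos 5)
Your overall framework (descent to a universal torsor, M\"obius inversion for the coprimality conditions, a volume computation for $\sigma_\infty$, and the identification $\rho_S-1=3$) is sound and matches the paper at a bird's-eye level. But there are several concrete misassessments that would derail the plan as written.

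First, the torsor equation you guess, of the form $\eta_1\eta_2^2 = \eta_3\eta_4\eta_5 + \cdots$, is wrong. The universal torsor over $\widetilde S$ is cut out in $\A^7$ by the single \emph{linear} (in each monomial) relation
$s_1 y_1 - s_2 y_2 + s_3 y_3 = 0$,
with one of the seven variables absent from the equation. The paper derives this directly and elementarily from the nine quadrics defining $S$ (Lemma~\ref{lem:301.2}), by repeatedly factoring equations such as $x_1^2 = x_2 x_4$, $x_2 x_6 = x_3^2$, $x_4 x_6 = x_5^2$ using Exercise~\ref{ex:301.1}. Getting this equation right matters: its linearity is precisely what lets one fix $(s_0,\mathbf{s})$, view the relation as a congruence $s_1 y_1 \equiv s_2 y_2 \pmod{s_3}$, and solve exactly for $y_1$ via the elementary interval estimate (Exercise~\ref{ex:interval}) and then for $y_2$ via Euler--Maclaurin (Exercise~\ref{ex:phi*}).

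Second, you anticipate needing ``divisor-correlation technology'' from \cite{d4}. That input is used in the paper only for the $\mathbf{D}_4$ \emph{cubic} surface (Theorem~\ref{main-d4}), not here. The treatment of this degree-$6$ surface in \S\ref{s:a1} is entirely self-contained and elementary: after the two Euler--Maclaurin summations, everything collapses to a one-variable sum $\sum_{n\leq B}\Delta(n)$ of a multiplicative-type coefficient, which is handled by Perron's formula (Lemma~\ref{lem:perron}) applied to $D(s)=\zeta(2s+1)^3\zeta(3s+1)E_2(s)$, giving a pole of order $4$ at $s=1/3$ and an error $O(B^{2/9+\varepsilon})$. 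This Perron/Dirichlet-series step is the missing mechanism in your plan: you attribute the secondary term $c_2 B(\log B)^2$ to vague ``boundary contributions,'' whereas it is the degree-$2$ coefficient of the monic cubic polynomial in $\log B$ coming from the order-$4$ pole; and the $O(B\log B)$ error is established directly in the intermediate summations (where one term is visibly bounded by $B\log B$), not by any fine divisor analysis. Without pinning down the torsor equation and the Perron step, the proposed argument cannot produce the stated $c_1$, $c_2$ or the quality of error term claimed.
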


Since $\Pic(\tS)$ has rank $4$, the exponents of $B$ and $\log B$ in
this asymptotic formula are in complete agreement with Conjecture
\ref{c:manin-ns}. Although we will not give details here, it turns
out that the value of the constant $c_1$ also 
confirms the prediction of Peyre \cite{MR1340296} in this case. It is hoped that our
proof of Theorem \ref{t:a1} will encourage other researchers to try
their hand at proving asymptotic formulae for $\nub$. With this in
mind Exercise \ref{ex:rp} is more of a research problem, and its
resolution will therefore conclude the proof of the Manin conjecture
for all split (non-singular {\em or} singular) del Pezzo surfaces of
degree 6.

\begin{ex}\label{ex:rp}
Establish an asymptotic formula for the type \texttt{iv} surface in
Table \ref{t:class_deg6}, with underlying equations \eqref{eq:a2}.
\end{ex}

\subsection{Universal torsors}\label{s:ut}

Universal torsors were originally introduced 
by Colliot-Th\'el\`ene and Sansuc \cite{ct1,ct2} 
to aid in the study of the Hasse principle and weak approximation for
rational varieties.  Since their inception it is now well-recognised that
they also have a central r\^ole to play in proofs of the Manin
conjecture for Fano varieties, and in particular, for del Pezzo surfaces.
Let $S \subset \bfP^d$ be a del Pezzo
surface of degree $d\in \{3,4,5,6\}$, and let $\tS$ denote the minimal desingularisation
of $S$ if it is singular, and $\tS=S$ otherwise.  Let $E_1,\ldots,E_{10-d} \in
\Div(\tS)$ be generators for $\Pic_{\cQ}(\tS)$, and
let $E_i^\times=E_i\setminus\{\mbox{zero section}\}$.  
Working over $\overline{\Q}$,  a {\em universal torsor} above $\tS$ is given by the
action of $\mathbb{G}_m^{10-d}$ on the map
$$
\pi: E_1^\times \times_\tS \cdots \times_\tS E_{10-d}^\times
\rightarrow \tS.
$$
A proper discussion of universal torsors would
take us too far afield, and the reader may
consult the survey of Peyre \cite{MR2029862} for further details, or indeed the
construction of Hassett and Tschinkel \cite{MR2029868}. The latter outlines
an alternative approach to universal torsors via the Cox ring.
Given the usual open subset $U\subset S$, the general theory of universal torsors 
ensures that there is a partition of $U(\Q)$
into a disjoint union of patches, each of which is in
bijection with a suitable set of integral points on 
a universal torsor above $\tS$.

The guiding principle behind the use of universal torsors is simply that they ought to be
arithmetically simpler than the original variety. In our work it will
suffice to think of universal torsors as ``particularly nice
parametrisations'' of rational points on the surface.
The universal torsors that we encounter in these lecture notes 
all have embeddings as affine hypersurfaces of high dimension. 
Moreover, in each case we will show how the underlying equation of the universal torsor
can be deduced in a completely elementary fashion, without
any recourse to geometry whatsoever.
The torsor equations we will meet all take the shape
$$
A+B+C=0, 
$$
for monomials $A,B,C$ of various degrees in the
appropriate variables.   As in many examples of counting problems for
higher dimensional varieties, one can occasionally gain leverage by
fixing some of the variables at the outset, in order to be left with a
counting problem for a family of small dimensional varieties.
If one is sufficiently clever about which variables to fix first, one is
sometimes left with a quantity 
that we know how to estimate --- and crucially --- whose error term we can control
once summed over the remaining variables.

As a concrete example, we note that Hassett and Tschinkel
\cite{MR2029868} have calculated the universal torsor for 
the cubic surface \eqref{eq:e6}.
It is shown that there is a unique universal torsor above
$\tS_1$, and that it is given by the equation
$$
y_\ell s_\ell^3 s_4^2 s_5+y_2^2s_2+y_1^3s_1^2s_3 = 0,
$$
for variables
$y_1,y_2,y_\ell,s_1,s_2,s_3,s_\ell,s_4,s_5,s_6$.
One of the variables does not explicitly appear in this equation, and
the torsor should be thought of as being embedded in $\A^{10}$.
It turns out that the way to proceed here is to fix all of the
variables apart from $y_1,y_2,y_\ell$.
One may then  view the  equation as a congruence
$$
y_2^2s_2\equiv -y_1^3s_1^2s_3 ~~\mod{s_\ell^3s_4^2s_5},
$$
in order to take care of the summation over $y_\ell$.
This is the approach taken in \cite{e6}, the next step being to employ
very standard facts about the number of integer solutions to 
polynomial congruences that are restricted to lie in certain regions.
One if left with a main term and an error term, which the
remaining variables need to be summed over.  While the treatment of
the main term is relatively routine, the treatment of the error term presents a much
more serious obstacle.

The universal torsors that turn up in the proofs of Theorems
\ref{main-d4} and \ref{t:a1} can also be embedded in affine space as
hypersurfaces.  We will see in \S \ref{s:a1} that the approach
discussed above also produces results for 
the del Pezzo surface of degree $6$ considered in Theorem
\ref{t:a1}.   In the proof of Theorem  \ref{main-d4} in \S
\ref{s:d4_cubic} our approach will
be more obviously geometric, and we will actually view the equation as a
family of projective lines, and also as a family of conics. We will
then call upon techniques from the 
geometry of numbers to count the relevant solutions.

\section{Further heuristics}\label{s:further}

We have seen in \S \ref{s:conjectures}, and in particular in the
statement of Conjecture~\ref{c:manin-ns}, that for any del Pezzo surface
$S\subset\bfP^d$ one expects a growth rate like $c_S B(\log B)^A$
for the counting function $\nub$. We have already given some
motivation for the exponent of $B$ in \eqref{eq:heuristic}. 
The focus of the present section is to produce much more sophisticated heuristics than we have
previously met. In particular we will gain an insight into the
exponent of $\log B$ that appears in the Manin conjecture.

For ease of presentation we restrict attention to
non-singular diagonal cubic surfaces $S\subset\bfP^3$. Thus
\begin{equation}
  \label{eq:111.1}
S=\{a_1x_1^3+a_2x_2^3+a_3x_3^3+a_4x_4^3=0\},  
\end{equation}
for $\ma{a}=(a_1,\ldots,a_4)\in\N^4$ such that
$\hcf(a_1,\ldots,a_4)=1$.  Define 
\begin{equation}
  \label{eq:P}
\mcal{P}:=\{3\}\cup\{p: p\mid a_1a_2a_3a_4\}.  
\end{equation}
This is the set of primes $p$ for which the reduction of $S$
modulo $p$  is singular.
Before passing to the arithmetic of diagonal cubic surfaces, we will
need to discuss some of their geometry.

\subsection{The lines on a cubic surface}\label{s:lines}

The facts that we will need in this section are explained in 
detail in the books of Hartshorne
\cite{hart} and Manin \cite{manin-book}.  In general, a non-singular cubic surface $S\subset
\bfP^3$ is obtained
by blowing up $\bfP^2$ along a collection of $6$ points
$P_1,\ldots,P_6$ in general position. By {\em general position} we mean that
no $3$ of them are collinear and they do not all lie on a conic.
The $27$ lines on the surface arise in the following way. There are
$6$ exceptional divisors $E_i$ above $P_i$, for $1 \leq i \leq 6$, and
$15$ strict transforms $L_{i,j}$ of the lines going through precisely
$2$ points $P_i,P_j$, for $1 \leq i<j \leq 6$. Finally there are the $6$ strict
transforms $Q_i$ of the conics going through all but one of the $6$
points.  If $\Lambda$ is the strict transform of a line in
$\bfP^2$ that doesn't go through any of the $P_i$, then a basis of
the geometric Picard group $\Pic_{\cQ}(S)$ is given by
$$
[\Lambda], [E_1], \ldots, [E_6].  
$$
The remaining divisors may be
expressed in terms of these elements via the relations
\begin{equation}
  \label{eq:111.3}
[L_{i,j}]= [\Lambda]-[E_i]-[E_j], \quad
[Q_{i}]=2[\Lambda]-\sum_{j\neq i}[E_j].
\end{equation}
The class of the anti-canonical divisor $-K_S$ is given by 
$
[-K_S]=3[\Lambda]-\sum_{i=1}^6[E_j],
$
although we will not need this fact in our work. One can check that
the hyperplane section has class 
$-3[\Lambda]+\sum_{i=1}^6[E_j]$ in $\Pic_{\cQ}(S)$, so that the cubic
surface has very ample anticanonical divisor, as claimed in \S \ref{s:conjectures}.

When $S$ takes the shape \eqref{eq:111.1} it is not hard to write down the $27$ lines 
explicitly. The calculations that we present below are based on those
carried out by Peyre and Tschinkel \cite[\S 2]{p-t2}.
Fix a cubic root $\alpha$ (resp. $\alpha',\alpha''$)
of $a_2/a_1$ (resp. $a_3/a_1$, $a_4/a_1$). We will assume that $\al\in\Q$
if $a_2/a_1$ (resp. $a_3/a_1$, $a_4/a_1$) is a cube in $\Q$.
Put 
$$
\beta=\frac{\alpha''}{\alpha'},\quad 
\beta'=\frac{\alpha}{\alpha''},\quad
\beta''=\frac{\alpha'}{\alpha}.
$$
We denote by $\theta$ a primitive cube root of one.
Let $i$ run over elements of $\Z/3\Z$.
Then the  $27$ lines on the cubic surface \eqref{eq:111.1}
are given by the equations
\[
\arraycolsep 0pt
\begin{array}{rclrclrcl}
L_i&:&\begin{cases}
x_1{+}\theta^i\alpha x_2=0,\\
x_3{+}\theta^i\beta x_4=0,
\end{cases}&
L_i'&:&\begin{cases}
x_1{+}\theta^i\alpha x_2=0,\\
x_3{+}\theta^{i{+}1}\beta x_4=0,
\end{cases}&
\hspace{-0.1cm}
L_i''&:&\begin{cases}
x_1{+}\theta^i\alpha x_2=0,\\
x_3{+}\theta^{i{+}2}\beta x_4=0,
\end{cases}\\
\noalign{\vskip0.75ex\penalty 1000}
M_i&:&\begin{cases}
x_1{+}\theta^i\alpha' x_3=0,\\
x_4{+}\theta^{i}\beta' x_2=0,
\end{cases}&
M_i'&:&\begin{cases}
x_1{+}\theta^i\alpha' x_3=0,\\
x_4{+}\theta^{i{+}1}\beta' x_2=0,
\end{cases}&
\hspace{-0.1cm}
M_i''&:&\begin{cases}
x_1{+}\theta^i\alpha' x_3=0,\\
x_4{+}\theta^{i{+}2}\beta' x_2=0,
\end{cases}\\
\noalign{\vskip0.75ex\penalty 1000}
N_i&:&\begin{cases}
x_1{+}\theta^i\alpha'' x_4=0,\\
x_2{+}\theta^{i}\beta'' x_3=0,
\end{cases}&
N_i'&:&\begin{cases}
x_1{+}\theta^i\alpha'' x_4=0,\\
x_2{+}\theta^{i{+}1}\beta'' x_3=0,
\end{cases}&
\hspace{-0.1cm}
N_i''&:&\begin{cases}
x_1{+}\theta^i\alpha'' x_4=0,\\
x_2{+}\theta^{i{+}2}\beta'' x_3=0.
\end{cases}
\end{array}
\]
Let $K=\Q(\theta,\alpha,\alpha',\alpha'').$
It is a Galois extension of $\Q$, and in the generic case
has degree $54$ with Galois group $\Gal(K/\Q)\cong (\Z/3\Z)^3 
\rtimes \Z/2\Z.$

We need to equate these lines to the divisors $E_i,L_{i,j},Q_i$
that we met earlier. There is a certain degree of freedom in doing
this, as discussed in \cite[\S V.4]{hart}, but it turns out that the choice 
\begin{equation}
\label{eq:111.2}
\begin{aligned}
E_1&=L_0,&E_2&=L_1,&E_3&=L_2,\\
E_4&=M_1,&E_5&=M'_2,&E_6&=M''_0,\\
Q_1&=L'_1,&Q_2&=L'_2,&Q_3&=L'_0,\\
Q_4&=M_0,\quad&Q_5&=M'_1,\quad&Q_6&=M''_2,\\
L_{1,2}&=L''_1,&L_{2,3}&=L''_2,&L_{1,3}&=L''_0,\\
L_{4,5}&=M''_1,&L_{5,6}&=M_2,&L_{4,6}&=M'_0,\\
L_{1,4}&=N_0,&L_{1,5}&=N_1,&L_{1,6}&=N_2,\\
L_{2,4}&=N'_1,&L_{2,5}&=N'_2,&L_{2,6}&=N'_0,\\
L_{3,4}&=N_2'',&L_{3,5}&=N''_0,&L_{3,6}&=N_1'',
\end{aligned}
\end{equation}
is satisfactory. 
In assigning lines to $E_1,\ldots,E_6$, all that is required is that
they should all be mutually skew. 
Given \emph{any} cubic surface of the shape \eqref{eq:111.1}, 
we now have the tools with which to compute the Picard group
\eqref{eq:pg}. In fact, from
this point forwards  the process requires little more than basic linear algebra.

Let us illustrate the procedure by calculating the Picard group for
a special case.  Consider 
the Fermat surface 
\begin{equation}
  \label{eq:S1}
S_1=\{x_1^3+x_2^3+x_3^3+x_4^3=0\}.
\end{equation}
In this case $\al=\al'=\al''=\be=\be'=\be''=1$, in the notation above,
and $K=\Q(\theta)$ is a quadratic field extension. 
We wish to find elements of the geometric Picard group
$\Pic_{\cQ}(S_1)$ that are fixed by the action of $\Gal(K/\Q)$. 
Thus we want vectors $\ma{c}=(c_0,\ldots,c_6)\in\Z^7$ such that
\begin{equation}
  \label{eq:fixed}
(c_0[\Lambda]+c_1[E_1]+\cdots+c_6[E_6])^\sigma = 
c_0[\Lambda]+c_1[E_1]+\cdots+c_6[E_6], 
\end{equation}
for every $\sigma\in\Gal(K/\Q)$.
Under the action of $\Gal(K/\Q)$ it is not hard to check that $\Lambda$ and $E_1$ are fixed,
that $E_2$ and $E_3$ are swapped, and that $E_4$ (resp. $E_5$, $E_6$) is taken to 
$L_{5,6}$ (resp. $L_{4,5}$, $L_{4,6}$).   Using 
\eqref{eq:111.3} one sees that the left hand side of \eqref{eq:fixed} is
equal to 
\begin{align*}
(c_0+c_4+c_5+c_6)[\Lambda]+&c_1[E_1]+c_2[E_3]+c_3[E_2]\\
&\quad
-(c_5+c_6)[E_4]
-(c_4+c_5)[E_5]-(c_4+c_6)[E_6],
\end{align*}
in $\Pic_{\cQ}(S_1)$. Thus we are interested in the space 
of $\ma{c}\in\Z^7$ for which
$$
c_4+c_5+c_6=0,\quad c_2-c_3=0,\quad c_4+2c_5=0,\quad c_4+2c_6=0.
$$
This  system of homogeneous linear equations in $7$ variables has
underlying matrix of rank $3$. Thus the space of solutions has rank
$7-3=4$, and so we may conclude that $\Pic(S_1)\cong\Z^4$. In fact a little thought
reveals that the $4$ elements 
$$
[\Lambda],  [E_1],  [E_2]+[E_3], -2[E_4]+[E_5]+[E_6],
$$
provide a basis for $\Pic(S_1)$.

Next, consider the surface
\begin{equation}
  \label{eq:111.5}
S_2=\{x_1^3+x_2^3+x_3^3+px_4^3=0\}
\end{equation}
for a prime $p$. When $p=2$ or $3$, the arithmetic of this surface has been considered in
some detail by Heath-Brown \cite{hb:density}, who provides some
numerical evidence to the effect that the corresponding counting
function $\nub$ should grow like $c_pB$ for a certain constant $c_p>0$. 

\begin{ex}
Show that  $\Pic(S_2)\cong\Z$. 
\end{ex}

This calculation has also been carried out by Colliot-Th\'el\`ene, 
Kanevsky and Sansuc \cite[p. 12]{ctks}. More generally, it is known
that the Picard group of the surface \eqref{eq:111.1} has rank $1$ if
and only if the ratio
$$
\frac{a_{\sigma(1)}a_{\sigma(2)}}{a_{\sigma(3)}a_{\sigma(4)}}
$$
is not a cube in $\Q$, for each permutation $\sigma$
of $(1,2,3,4)$.  This result is due to Segre \cite{segre}.

\subsection{Cubic characters and Jacobi sums}

Throughout this section let $p$ be a prime. 
Recall that  a (multiplicative) character on $\F_p=\Z/p\Z$ is a map
$\chi:\F_p^*\rightarrow \C^*$ such that
$$
\chi(ab)=\chi(a)\chi(b)
$$
for all $a,b\in\F_p^*$. The {\em trivial character} $\ve$ is defined by the
relation  $\ve(a)=1$ for all $a\in\F_p^*$. 
It is convenient to extend the domain of definition to all of $\F_p$
by assigning  $\chi(0)=0$ if $\chi\neq \ve$ and $\ve(0)=1$.

We begin by collecting together a few basic facts, all of
which are established in \cite[\S 8]{ir}.

\begin{lem}\label{lem:characters}
Let $p$ be a prime. Then the following hold:
\begin{enumerate}
\item 
Let $\chi$ be a character on $\F_p$ and let 
$a\in\F_p^*$. Then $\chi(1)=1$, $\chi(a)$ is a $(p-1)$-th root of
unity, and 
$
\chi(a^{-1})=\overline{\chi}(a)=\chi(a)^{-1}.
$
\item For any character $\chi$ on $\F_p$ we have 
$$
\sum_{a\in\F_p} \chi(a)=\left\{
\begin{array}{ll}
0,&\mbox{if $\chi\neq \ve$,}\\
p,&\mbox{if $\chi= \ve$.}
\end{array}
\right.
$$
\item The set of characters on $\F_p$ forms a cyclic group of order $p-1$.
\end{enumerate}
\end{lem}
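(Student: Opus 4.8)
The plan is to treat the three parts in turn, each of which reduces to the cyclicity of $\F_p^*$ together with elementary manipulations; no part should present any real difficulty.

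For part (1), I would begin with $\chi(1)=\chi(1\cdot 1)=\chi(1)^2$ and $\chi(1)\in\C^*$, which forces $\chi(1)=1$. Since $\F_p^*$ has order $p-1$, Lagrange's theorem (equivalently, Fermat's little theorem) gives $a^{p-1}=1$ for every $a\in\F_p^*$, so $\chi(a)^{p-1}=\chi(a^{p-1})=\chi(1)=1$; thus $\chi(a)$ is a $(p-1)$-th root of unity and in particular $|\chi(a)|=1$. Finally, $\chi(a)\chi(a^{-1})=\chi(aa^{-1})=\chi(1)=1$ yields $\chi(a^{-1})=\chi(a)^{-1}$, and combining with $|\chi(a)|=1$ gives $\chi(a)^{-1}=\overline{\chi(a)}$, which is exactly $\overline{\chi}(a)$ for the conjugate character $\overline{\chi}$.

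For part (2), the case $\chi=\ve$ is immediate from the convention $\ve(a)=1$ for all $a\in\F_p$, so the sum equals $p$. If $\chi\neq\ve$, the convention $\chi(0)=0$ reduces the sum to $S:=\sum_{a\in\F_p^*}\chi(a)$; since $\chi$ is non-trivial there is some $b\in\F_p^*$ with $\chi(b)\neq 1$, and as $a\mapsto ba$ permutes $\F_p^*$ we obtain $\chi(b)S=\sum_{a\in\F_p^*}\chi(ba)=S$. Hence $(\chi(b)-1)S=0$, and therefore $S=0$.

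For part (3), I would first observe that the characters form an abelian group under pointwise multiplication, with identity $\ve$ and with $\overline{\chi}$ as the inverse of $\chi$ by part (1). The one substantive input is that $\F_p^*$ is cyclic of order $p-1$, which I would simply quote; fixing a generator $g$, any character $\chi$ is determined by $\chi(g)$, which must be a $(p-1)$-th root of unity by part (1), and conversely every $(p-1)$-th root of unity $\zeta$ defines a character via $\chi(g^k)=\zeta^k$. This produces an isomorphism from the character group onto the cyclic group of $(p-1)$-th roots of unity in $\C^*$, so the character group is cyclic of order $p-1$, a generator being any character sending $g$ to a primitive $(p-1)$-th root of unity. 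The only non-formal ingredient in the whole lemma is the cyclicity of $\F_p^*$; everything else is bookkeeping, so the real work of this section lies ahead, in handling the Jacobi sums.
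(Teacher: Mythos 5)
Your proof is correct and entirely standard; note, however, that the paper itself does not prove this lemma at all — it simply cites Ireland and Rosen \cite[\S 8]{ir} for these facts, so there is no "paper's approach" to compare against. Your argument is exactly the one found in that reference (and in most textbooks): the computations in parts (1) and (2) are purely formal, and part (3) turns on the single non-trivial input that $\F_p^*$ is cyclic of order $p-1$, which you are right to isolate as the only substantive ingredient.
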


It follows from part (3) of Lemma \ref{lem:characters} that
$\chi^{p-1}=\ve$ for any character on $\F_p$. We 
define the {\em order} of a character to be the least positive
integer $n$ such that $\chi^n=\ve$. In our work we will mainly be concerned
with the characters of order $3$. 
Let us turn briefly to the topic of generalised Jacobi sums. Given any characters
$\chi_1,\ldots,\chi_r$ on $\F_p$, a {\em Jacobi sum} is a sum of the shape
$$
J_0(\chi_1,\ldots,\chi_r):=\sum_{\colt{\ma{t}=(t_1,\ldots,t_r)\in \F_p^r
}{t_1+\cdots+t_r \equiv 0\bmod{p}}} \chi_1(t_1)
\cdots \chi_r(t_r). 
$$
The key fact that we will need concerning these sums is that 
\begin{equation}
  \label{eq:jo}
  |J_0(\chi_1,\ldots,\chi_r)|=\left\{
\begin{array}{ll}
0,& \mbox{if $\chi_1\cdots \chi_r \neq \ve$,}\\
(p-1)p^{r/2-1},& \mbox{if $\chi_1\cdots \chi_r =\ve$.}
\end{array}
\right.
\end{equation}
This is established in \cite[\S 8.5]{ir}.

Let $p$ be a rational prime. We proceed to consider $p$ as an element
of the ring of integers $\Z[\theta]$ associated to the quadratic field $\Q(\theta)$
obtained by adjoining a primitive cube root of unity $\theta$. It follows from
basic algebraic number theory that $p$ is a prime in
$\Z[\theta]$ if $p\equiv 2 \bmod{3}$, whereas it splits as
$p=\pi\overline{\pi}$ if $p\equiv 1 \bmod{3}$, where $\pi$ is a
prime in $\Z[\theta]$.  
When $p\equiv 2 \bmod{3}$ the only cubic character on $\F_p$ is the   
trivial character $\ve$. On the other hand, when $p=\pi\overline{\pi}\equiv 1\bmod{3}$ then there
are precisely two non-trivial cubic characters
$\chi_\pi,\chi_{\overline{\pi}}$ on $\F_p$,
where
$$
\chi_\omega(\cdot) = \Big(\frac{\cdot}{\omega}\Big)_3
$$
is the cubic residue symbol for any prime $\omega$ in $\Z[\theta]$.
All of these facts are established in \cite[\S 9]{ir}.

It turns out that Jacobi sums can be used to give 
formulae for the number of solutions to appropriate equations over
finite fields. 
Given any $q\in\N$, let 
\begin{equation}
  \label{eq:Nq}
N(q):=\#\{\x \bmod q: a_1x_1^3+\cdots+ 
a_4x_4^3\equiv 0 \pmod{q}\},
\end{equation}
and
\begin{equation}
  \label{eq:Nq*}
N^*(q):=\#\Big\{\x \bmod q: 
\begin{array}{l}
a_1x_1^3+\cdots+ 
a_4x_4^3\equiv 0 \pmod{q},\\
\hcf(q,x_1,\ldots,x_4)=1
\end{array}
\Big\}.
\end{equation}
When $p$ is a prime not belonging to the finite set of primes $\mcal{P}$
defined in \eqref{eq:P}, we can write down a very precise expression for $N(p)$. 
Thus it follows from \cite[\S 8.7]{ir} that
$$
N(p)=p^3+ \sum_{\chi_1,\chi_2,\chi_3,\chi_4}
{\chi_1}(a_1^{-1})
{\chi_2}(a_2^{-1})
{\chi_3}(a_3^{-1})
{\chi_4}(a_4^{-1})  J_0(\chi_1,\chi_2,\chi_3,\chi_4)
$$
where the summation is over all non-trivial cubic characters $\chi_i:
\F_p^*\rightarrow \C$ such that $\chi_1\chi_2\chi_3\chi_4=\ve$.

\begin{ex}\label{ex:jo-sgn}
Let $p\not\in\mcal{P}$ be a prime, and let $\chi_1,\chi_2,\chi_3,\chi_4$ be non-trivial cubic characters on
$\F_p$ such that $\chi_1\chi_2\chi_3\chi_4=\ve$.  
Deduce from \eqref{eq:jo} that
$$
J_0(\chi_1,\chi_2,\chi_3,\chi_4)=p(p-1).
$$
\end{ex}

It follows from Exercise \ref{ex:jo-sgn} that
\begin{equation}
  \label{eq:N*}
  N^*(p) = N(p)-1 = p^3+p(p-1)\delta_p(\ma{a})-1.
\end{equation}
for any prime $p\not\in\mcal{P}$,
where
$$
\delta_p(\ma{a}):=
\sum_{\chi_1,\chi_2,\chi_3,\chi_4}
{\chi_1}(a_1^{-1})
{\chi_2}(a_2^{-1})
{\chi_3}(a_3^{-1})
{\chi_4}(a_4^{-1}). 
$$
Let $a\in\F_p^*$ and suppose that $p$ splits as
$\pi\overline{\pi}$. Then it will be useful to observe that
\begin{equation}
  \label{eq:split}
\chi_\pi(a)+\chi_{\overline{\pi}}(a)=\left\{
\begin{array}{ll}
2, &\mbox{if $a$ is a cubic residue modulo $\pi$,}\\
-1, & \mbox{otherwise}.
\end{array}
\right.
\end{equation}
We have $\delta_p(\ma{a})=0$ when $p\equiv 2 \bmod{3}$, since there
are then no non-trivial cubic characters
modulo  $p$. When $p\equiv 1 \bmod{3}$, with $p=\pi\overline{\pi}\not\in\mcal{P}$, we have
\begin{align*}
\delta_p(\ma{a})
=&
\chi_\pi\Big(\frac{a_1a_2}{a_3a_4}\Big)+
\chi_{\overline{\pi}}\Big(\frac{a_1a_2}{a_3a_4}\Big)
+
\chi_\pi\Big(\frac{a_1a_3}{a_2a_4}\Big)+
\chi_{\overline{\pi}}\Big(\frac{a_1a_3}{a_2a_4}\Big)\\
&\qquad \qquad \qquad \qquad \qquad \qquad +
\chi_\pi\Big(\frac{a_1a_4}{a_2a_3}\Big)+
\chi_{\overline{\pi}}\Big(\frac{a_1a_4}{a_2a_3}\Big).
\end{align*}
Still with this choice of prime $p$, let 
$\nu_p(\ma{a})$ denote the number of indices $i\in \{2,3,4\}$ for which 
the cubic character $\chi_{\pi}(\frac{a_1a_i}{a_ja_k})$ is equal to
$1$, with $\{i,j,k\}$ a permutation of $\{2,3,4\}$. 
Then we may deduce from \eqref{eq:split} that
\begin{equation}
  \label{eq:dell}
\delta_p(\ma{a})=
\left\{
\begin{array}{ll}
0, & \mbox{if $p\equiv 2 \bmod{3}$},\\
3\nu_p(\ma{a})-3, & \mbox{if $p\equiv 1 \bmod{3}$},
\end{array}
\right.
\end{equation}
when $p\not\in\mcal{P}$.

\subsection{The Hardy--Littlewood circle method}\label{s:HL}

We are now ready to consider the counting function
$\nub$ that is associated to the diagonal cubic surface $S\subset \bfP^3$
given  in \eqref{eq:111.1}. Our aim is to provide heuristic evidence in support of 
Manin's original conjecture, and we will say rather little
about the predicted value of the constant. 
The Hardy--Littlewood circle method is an extremely effective means of
estimating counting functions associated to projective algebraic
varieties, but it only works when the dimension of
the variety is substantially larger than the degree. We have already
seen evidence of this in the statement of Theorem \ref{t:birch}, which is based on an
application of the circle method. 
Although it has not been made to produce asymptotic formulae for the
counting functions associated to del Pezzo surfaces, in this section
we will see how the
Hardy--Littlewood method can still be used as a useful heuristic tool.
The key idea is to consider only the contribution from the 
major arcs.

We will simplify matters by applying the heuristic to count all of
the rational points on $S$, rather than restricting attention to the
open subset $U$. Although the details are formidable, it is in fact
possible to obtain upper bounds for $\nub$ using the circle
method. Thus Heath-Brown \cite{hb-3/2} has shown that
$\nub=O_{\ve,S}(B^{3/2+\ve})$ under a certain hypothesis concerning
the Hasse--Weil $L$-function associated to the surface.
An interesting feature of this work is that the contribution from the rational points lying on
rational lines in the surface is successfully
separated out.  When the surface contains no lines defined over $\Q$, such as the surface
given  by \eqref{eq:111.5} for example, one obviously has
$$
\nub=N_{S}(B)+O(1).
$$
When $S$ contains lines defined over $\Q$ there is a general consensus
among people working on the circle method that the
dominant contribution (ie. the contribution from the points on
rational lines) should come from the minor arc integral.

In what follows let $e(z):=e^{2\pi i z}$ for any $z\in\R$. As usual, 
$Z^4$ denotes the set of primitive vectors in $\Z^4$.
The igniting spark in the Hardy--Littlewood circle method is the
simple identity
$$
\int_0^1 e(\al n)\d \al = \left\{
\begin{array}{ll}
1,& \mbox{if $n=0$,}\\
0,& \mbox{if $n\in\Z\setminus\{0\}$.}
\end{array}
\right.
$$
On taking into account the fact that $\x$ and $-\x$ represent the same
point in projective space, and applying Exercise \ref{e:mob}, we deduce that
\begin{align*}
N_S(B)
&=\frac{1}{2}\int_0^1 \sum_{\colt{\x\in Z^4}{|\x|\leq B}} e(\al(a_1x_1^3+\cdots
+a_4x_4^3))\d\alpha\\
&=\frac{1}{2}\sum_{k=1}^\infty \mu(k)\int_0^{1} \sum_{\colt{\x\in \Z^4}{|\x|\leq B/k}} e(\al(a_1x_1^3+\cdots
+a_4x_4^3))\d\alpha\\
&=\frac{1}{2}
\sum_{k=1}^\infty \mu(k)
\int_0^{1} S(\al)\d\alpha,
\end{align*}
say. 
Let us write $P=B/k$ and 
$I_{\mcal{A}}(P):=\int_\mcal{A} S(\al)\d\al$,
for any bounded subset $\mcal{A}\subset \R$.
The cubic exponential sum $S(\al)$ can actually be rather large when
$\al$ is well-approximated by a rational number with small
denominator. For example, we clearly have 
$S(0)=2^4P^4+O(P^3)$.
The philosophy that underpins the Hardy--Littlewood method is that
one expects $S(\al)$ to be small for values of $\al\in[0,1]$ that are
not well-approximated by rational numbers with small denominator. This
is notoriously difficult to prove in general, and as indicated above, 
is expected to be false in the present setting!

Our heuristic will be based on analysing $I_{\mathfrak{M}}(P)$ for a
suitable choice of ``major arcs'' $\mathfrak{M}$. We will not give
full details here, the gaps being easily filled by consulting
the relevant techniques in Davenport \cite{dav}.
Let $\ve>0$ be a
small parameter.  Given $a,q\in\Z$ such that 
\begin{equation}
  \label{eq:111.6}
1\leq a\leq q\leq P^\ve, \quad \hcf(a,q)=1,
\end{equation}
we define the interval
$$
\mathfrak{M}(a,q):=\Big[\frac{a}{q}-P^{-3+\ve}, ~\frac{a}{q}+P^{-3+\ve}\Big].
$$
We take as major arcs the union
$$
\mathfrak{M}:=\bigcup_{q\leq P^\ve}\bigcup_{\colt{1\leq a\leq
    q}{\hcf(a,q)=1}} \mathfrak{M}(a,q).
$$
It is clear that $\mathfrak{M}$ contains all the points in
the interval $[0,1]$ that are well-approximated by rational numbers
with small denominator.

\begin{ex}
Show that $\mathfrak{M}$ is a disjoint union for $\ve<1$.
\end{ex}

The ``minor arcs'' are defined to be
$\mathfrak{m}:=[0,1]\setminus \mathfrak{M}$, and we will proceed under the
assumption that the minor arc integral
$I_{\mathfrak{m}}(P)$ can be ignored. Actually we will also ignore the
contribution that this term makes once it is summed up over values of $k$. 
In truth there will be several points in the argument where we will simply {\em ignore} subsidiary
contributions. We will indicate all of these by an appearance of the word
``error''.   Thus, to begin with, we have
\begin{align*}
N_U(B)
=\frac{1}{2}\sum_{k=1}^\infty \mu(k)
I_{\mathfrak{M}}(P) +\err.
\end{align*}
Here we have made the further assumption that the contribution
$N_{S\setminus U}(B)$ from the points lying on lines in $S$
arises in the minor arc integral.

Let $a,q\in\Z$ such that \eqref{eq:111.6} holds, and let
$\al=a/q+z\in\mathfrak{M}(a,q)$. Let $C(\x)$ denote the
diagonal cubic form in \eqref{eq:111.1}. We now break the sum into
congruence classes modulo $q$, giving
\begin{equation}
  \label{eq:161.1}
S(a/q+z)
=\sum_{\ma{r}\bmod{q}}
e(aC(\ma{r})/q)
\sum_{\colt{\x\in \Z^4\cap[-P,P]^4}{\ma{x}\equiv \ma{r}\bmod{q}}}
e(zC(\ma{x})).
\end{equation}
We would like to replace the discrete variable $\ma{x}$ by a continuous
one in the inner sum, and the summation over $\ma{x}$ by an integral.  
For this we will appeal to the following general result.

\begin{lem}\label{E-M}
Let $P \geq 1$, let $\ma{a}\in \Z^n$ and let $r\in \N$ such that
$r\leq P$.
Let $F$ be a function on $\R^n$ all of whose first
order partial derivatives exist and are continuous on $\mcal{R}:=[-P,P]^n$.
Define
$$
M_F:=\sup_{\x\in\mcal{R}} \max_{1\leq i\leq n} \Big|\frac{\partial
  F}{\partial x_i}(\x)\Big|.
$$
Then we have
$$
\sum_{\colt{\x\in \Z^n\cap\mcal{R}}{\ma{x}\equiv \ma{a}\bmod{r}}}
e(F(\ma{x})) = \frac{1}{r^n}\int_{\mcal{R}} e(F(\ma{t})) \d \ma{t} +
O\Big(\frac{P^{n-1}(1+ PM_F)}{r^{n-1}}\Big).
$$
\end{lem}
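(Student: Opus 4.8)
The plan is to compare the sum over the arithmetic progression with the integral by tiling $\R^n$ by small cubes — one based at each point of the relevant coset — and estimating the discrepancy cube by cube. Write $\mcal{L}=\{\z\in\Z^n:\z\equiv\ma{a}\bmod{r}\}$, and for each $\z\in\mcal{L}$ let $Q_\z=\z+[0,r)^n$. As $\z$ ranges over $\mcal{L}$ these half-open cubes tile $\R^n$ and each has volume $r^n$, so that
\[
\frac{1}{r^n}\int_{\mcal{R}} e(F(\ma{t}))\,\d\ma{t}=\frac{1}{r^n}\sum_{\z\in\mcal{L}}\int_{Q_\z\cap\mcal{R}} e(F(\ma{t}))\,\d\ma{t},
\]
the summation being over those $\z$ for which $Q_\z\cap\mcal{R}\neq\emptyset$. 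I would then split $\mcal{L}$ into the \emph{interior} indices, those with $Q_\z\subseteq\mcal{R}$ (for which automatically $\z\in\mcal{R}$), and the \emph{boundary} indices, those for which $Q_\z$ meets both $\mcal{R}$ and its complement.

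For an interior index $\z$ the first step is the pointwise bound
\[
\Big|\,e(F(\z))-\frac{1}{r^n}\int_{Q_\z}e(F(\ma{t}))\,\d\ma{t}\,\Big|\leq\sup_{\ma{t}\in Q_\z}\big|e(F(\z))-e(F(\ma{t}))\big|\leq 2\pi\sup_{\ma{t}\in Q_\z}\big|F(\z)-F(\ma{t})\big|,
\]
using $|e(u)-e(v)|\leq 2\pi|u-v|$ for real $u,v$. Passing from $\z$ to $\ma{t}$ one coordinate at a time and applying the one-variable mean value theorem on each slice — legitimate since $F$ has continuous first partial derivatives and the slices stay inside $Q_\z\subseteq\mcal{R}$ — gives $|F(\z)-F(\ma{t})|\leq nrM_F$, so the per-cube discrepancy is $O(rM_F)$. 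Since $r\leq P$, the number of interior indices is at most $\#(\mcal{L}\cap\mcal{R})\ll(P/r)^n$, and summing produces a contribution $\ll rM_F(P/r)^n=P^nM_F/r^{n-1}$ to the error.

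For a boundary index $\z$, the term $e(F(\z))$ (when it occurs on the left) and the quantity $r^{-n}\int_{Q_\z\cap\mcal{R}}e(F(\ma{t}))\,\d\ma{t}$ are each $O(1)$, so each such index contributes $O(1)$; it remains to count them. A boundary cube must meet the topological boundary of $\mcal{R}$, which is a union of $2n$ translates of $[-P,P]^{n-1}$, and the number of cubes of the tiling meeting any one of these faces is $\ll(P/r+1)^{n-1}\ll(P/r)^{n-1}$, again using $r\leq P$. Hence there are $\ll(P/r)^{n-1}=P^{n-1}/r^{n-1}$ boundary indices, contributing $\ll P^{n-1}/r^{n-1}$ in total. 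Combining the two estimates yields an overall error of $\ll P^{n-1}/r^{n-1}+P^nM_F/r^{n-1}=P^{n-1}(1+PM_F)/r^{n-1}$, as claimed.

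The counting of lattice points and the elementary inequalities are routine; the point needing care is making the interior/boundary dichotomy match exactly the terms occurring on the two sides of the displayed identity. In particular the mean value estimate must only be invoked on cubes entirely contained in $\mcal{R}$, so that the segment joining $\z$ to a point of $Q_\z$ never leaves the region on which the derivative bound $M_F$ is guaranteed, and one must remember to include among the boundary cubes those whose base point $\z$ lies just outside $\mcal{R}$ while $Q_\z$ still protrudes into it. I expect this boundary bookkeeping to be the only (still minor) obstacle to a clean write-up.
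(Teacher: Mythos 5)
Your argument is correct, but it follows a genuinely different route from the one in the text. The paper proceeds by induction on $n$: it first extracts from the Euler--Maclaurin formula (with $s=0$) a one-variable identity for a sum over an arithmetic progression, then applies the inductive bound to the inner $(n-1)$-fold sum and the one-variable identity to the outer sum of the resulting $(n-1)$-fold integral. Your tiling argument avoids Euler--Maclaurin entirely: you tile $\R^n$ by the cubes $Q_\z=\z+[0,r)^n$ based at the points $\z$ of the coset $\ma{a}+r\Z^n$, compare $e(F(\z))$ with the average of $e(F)$ over $Q_\z$ via a one-coordinate-at-a-time mean value estimate when $Q_\z\subseteq\mcal{R}$, and absorb the $\ll(P/r)^{n-1}$ cells straddling $\partial\mcal{R}$ into the error crudely. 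This is more elementary and makes the two sources of error geometrically transparent: $P^nM_F/r^{n-1}$ from the oscillation of $F$ over each interior cell, and $P^{n-1}/r^{n-1}$ from the boundary cells. The trade-off, and the reason the paper routes through Euler--Maclaurin, is that that method mechanically yields sharper error terms when higher-order derivatives of $F$ are controlled, by retaining further terms of the expansion ($s\geq 1$), as the text remarks immediately after the proof; your per-cell comparison only captures the first-order oscillation. Your bookkeeping is sound: the cubes tile $\R^n$ since $[0,r)^n$ is a fundamental domain for $r\Z^n$; the mean-value bound is invoked only on cells contained in $\mcal{R}$, so the coordinatewise path from $\z$ to $\ma{t}\in Q_\z$ never leaves the region on which the bound $M_F$ is available; and the hypothesis $r\leq P$ is used exactly where you invoke it, to get $P/r+1\ll P/r$ in both lattice-point counts.
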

\begin{proof}
Our proof of Lemma \ref{E-M} is based on the Euler--Maclaurin summation
formula \cite[\S I.0]{ten}. Let $B_k(x)$ denote the $k$th Bernoulli
polynomial, for $k \in \Z_{\geq 0}$, and let $s\in \Z_{\geq 0}$. Let
$A,B\in\Z$, with $A<B$. 
For any function $f:\R \rightarrow \C$ whose $(s+1)$-th derivative $f^{(s+1)}$ exists
and is continuous on the interval $[A,B]$, 
the Euler--Maclaurin summation formula states that
\begin{equation}\label{eq:EM}
\begin{split}
\sum_{A<n \leq B} f(n)=
\int_A^B f(t)\d t &+\sum_{k=0}^{s}
\frac{(-1)^{k+1}B_{k+1}(0)}{(k+1)!}\big(f^{(k)}(B)-f^{(k)}(A)\big) \\
&+\frac{(-1)^s}{(s+1)!} \int_A^B B_{s+1}(t)f^{(s+1)}(t)\d t.
\end{split}
\end{equation}
Let $a\in\Z$ and $r\in\N$. We will apply this result with
$f_0(x)=f(a+rx)$ and 
$$
A_0=\frac{A-a}{r},\quad B_0=\frac{B-a}{r}.
$$
Taking $s=0$ in the Euler--Maclaurin formula, we therefore deduce that
\begin{equation}\label{eq:em-q}
\sum_{\colt{A<n \leq B}{n\equiv a\bmod{r}}} f(n)=
\frac{1}{r}\int_A^B f(t)\d t -\frac{f(B_0)-f(A_0)}{2}+
\int_A^B B_{1}\Big(\frac{t-a}{r}\Big)f'(t)\d t,
\end{equation}
since $B_1(x)=x-[x]-\frac{1}{2}$.

We are now ready to establish Lemma \ref{E-M}, which we will do by
induction on $n$.  Write $\mcal{S}_n$ for the $n$-dimensional sum that is to
be estimated. 
The case $n=1$ of Lemma \ref{E-M} follows from \eqref{eq:em-q}
with $f(x)=e(F(x))$. Assuming now that $n\geq 2$, we have
$$
\mcal{S}_n=
\sum_{\colt{y\in\Z\cap[-P,P]}{y\equiv a_1 \bmod{r}}}
\sum_{x_2,\ldots,x_{n}}
e( G(x_2,\ldots,x_{n})),
$$
where $G(x_2,\ldots,x_n)=F(y,x_2,\ldots,x_{n})$,
and the sum over $x_2,\ldots,x_{n}$ is over all integers in
$[-P,P]$ such that $x_i\equiv a_i\bmod{r}$ for $2\leq i\leq n.$
We may employ the induction hypothesis to estimate the inner sum in
$n-1$ variables. It therefore follows that
\begin{align*}
\mcal{S}_n=
\sum_{\colt{y\in\Z\cap[-P,P]}{y\equiv a_1 \bmod{r}}}
\Big( \frac{1}{r^{n-1}}\int_{[-P,P]^{n-1}} &e(G(t_2,\ldots,t_n))\d
t_2\cdots \d t_n
\\ 
& +
O\Big(\frac{P^{(n-2)}(1+ PM_G)}{r^{n-2}}\Big)\Big).
\end{align*}
Now there are $O(P/r)$ integers $y$ in the interval $[-P,P]$ that are
congruent to $a_1$ modulo $r$, since $r\leq P$ by
assumption. Moreover, it is clear that $M_G\leq
M_F$. Hence 
\begin{align*}
\mcal{S}_n=
\frac{1}{r^{n-1}}\sum_{\colt{y\in\Z\cap[-P,P]}{y\equiv a_1 \bmod{r}}}
f(y)+
O\Big(\frac{P^{(n-1)}(1+ PM_F)}{r^{n-1}}\Big),
\end{align*}
where 
$$
f(y)=\int_{[-P,P]^{n-1}} e(F(y,t_2,\ldots,t_n))\d t_2\cdots \d t_n.
$$
The statement of  Lemma \ref{E-M} is now an easy consequence of \eqref{eq:em-q}.
\end{proof}

It is clear from the proof of Lemma  \ref{E-M} that when $F$ has
partial derivatives to a higher order, one may obtain a much sharper
estimate by including higher order terms in the Euler--Maclaurin
summation formula.  The present bound is satisfactory for our
purposes, however.

Returning to \eqref{eq:161.1} we apply Lemma \ref{E-M} with
$$
F(\x)=zC(\x),\quad n=4, \quad \ma{a}=\ma{r},\quad r=q.
$$
In particular we have $q\leq P^{\ve}\leq P$, as required for the lemma. Furthermore, 
$M_F=M_{zC}\ll |z|P^2 \leq P^{-1+\ve}$ for any
$\al=a/q+z\in\mathfrak{M}(a,q)$. It follows that
\begin{align*}
S(a/q+z)
&=
q^{-4}T(a,q)V_P(z) +O(P^{3+2\ve})
\end{align*}
on the major arcs, where
\begin{equation}
  \label{eq:111.8}
T(a,q):=\sum_{\ma{r}\bmod{q}} e(aC(\ma{r})/q), \quad
V_R(z):=\int_{[-R,R]^4} e(zC(\x))\d\x.
\end{equation}
The set of major arcs has $\meas(\mathfrak{M})=O(P^{-3+3\ve})$.  
On carrying out the integration over $z$ and the summation over $a$
and $q$ one is therefore led to the conclusion that
\begin{align*}
I_{\mathfrak{M}}(P)&=
P^4
\sum_{q\leq P^\ve} \sum_{\colt{1\leq a\leq
    q}{\hcf(a,q)=1}} q^{-4}T(a,q)\int_{|z|\leq {P^{-3+\ve}}} 
V_1(zP^3)\d z +O(P^{5\ve})\\
&=P
\sum_{q\leq P^\ve} \sum_{\colt{1\leq a\leq
    q}{\hcf(a,q)=1}} q^{-4}T(a,q)
\int_{|z|\leq P^{\ve}} 
V_1(z)\d z +O(P^{5\ve}).
\end{align*}
Define
$$
\mathfrak{I}(R):=\int_{|z|\leq R} 
V_1(z)\d z= 
\int_{|z|\leq R} \int_{[-1,1]^4} e(zC(\x))\d\x \d z.
$$
It can be shown that $\mathfrak{I}(R)$ is a bounded function of
$R$, and furthermore,
$\mathfrak{I}(R)\rightarrow \mathfrak{I}_0>0$ as $R\rightarrow \infty$. A standard
calculation reveals that the limit $\mathfrak{I}_0$ is equal to $2\sigma_{\infty}$, where
$$
\sigma_{\infty}:=\frac{1}{6a_4^{1/3}}\int \frac{\d
  x_1\d x_2 \d x_3}{(a_1x_1^3+a_2x_2^3+a_3x_3^3)^{2/3}}
$$
is the real density of solutions. 
Here the integral is over $x_1,x_2,x_3 \in [-1,1]$ such that
$|(a_1x_1^3+a_2x_2^3+a_3x_3^3)/a_4|\leq 1$.
We now define 
$$
\mathfrak{S}(R):= \sum_{q\leq R} q^{-4} \sum_{\colt{1\leq a\leq
    q}{\hcf(a,q)=1}} T(a,q).
$$
On bringing everything together, our investigation has so far succeeded in showing that
\begin{equation}
  \label{eq:121.1}
N_U(B)= \sigma_{\infty}\sum_{k=1}^\infty\mu(k)
 P \mathfrak{S}(P^\ve) +\err,
\end{equation}
for a suitably small value of $\ve>0,$  where $P=B/k$.

Our task is now to examine the sum $\mathfrak{S}(R)$, as $R\rightarrow
\infty$. Let us write
$$
S_q:=\sum_{\colt{1\leq a\leq q}{\hcf(a,q)=1}} T(a,q),
$$
where $T(a,q)$ is the complete exponential sum defined in \eqref{eq:111.8}.
Then $\mathfrak{S}(R)=\sum_{q\leq R}q^{-4}S_q$. 
It turns out that $S_q$ is a multiplicative function of $q$. 
This can be established along the lines of
\cite[Lemma 5.1]{dav}.  
Recall the definition \eqref{eq:Nq} of $N(q)$. 
We now come to the key relation between $S_{q}$ and $N(q)$ at prime power
values of $q$.

\begin{ex}\label{ex:S-N}
Let $p$ be a prime and let $e\geq 1$. Use Lemma \ref{lem:characters}
to show that
$$
S_{p^e}= p^eN(p^e)-p^{3+e}N(p^{e-1}).
$$
\end{ex}

Let us for the moment ignore considerations of convergence, and
consider the local factors $\sum_{e=0}^\infty p^{-4e}S_{p^e}$ in the infinite
product formula for $\mathfrak{S}(\infty)$. Now it follows from Exercise \ref{ex:S-N} that
\begin{align*}
\sum_{e=0}^E p^{-4e} S_{p^e}&=
1+\sum_{e=1}^E 
\big(p^{-3e}N(p^e)-p^{3-3e}N(p^{e-1})\big)
=
p^{-3E}N(p^{E}),
\end{align*}
for any $E\geq 1$.
Hence, formally speaking, we have
$
\mathfrak{S}(\infty)= \prod_p \tau_p,
$
where
$$
\tau_p:=\lim_{e\rightarrow \infty} p^{-3e}N(p^e).
$$
If $\mathfrak{S}(R)$ was convergent, which it certainly is
\emph{not} in general, 
we could then conclude from \eqref{eq:121.1} that
\begin{equation}
  \label{eq:222.1}
N_U(B)= B\sigma_{\infty}\sum_{k=1}^\infty \frac{\mu(k)}{k}\prod_p\tau_p +\err.
\end{equation}
Arguing formally, we now replace the summation over $k$ by its Euler
product, concluding that
\begin{equation}
  \label{eq:222.2}
N_U(B)= B\sigma_{\infty}\prod_p\sigma_p +\err,
\end{equation}
with $\sigma_p:=(1-1/p)\tau_p$.  Note that
\begin{equation}
  \label{eq:sigp*}
\sigma_p=\lim_{e\rightarrow \infty} p^{-3e}N^*(p^e),
\end{equation}
in the notation of \eqref{eq:Nq*}, which clearly follows 
from the observation that 
$$
N^*(p^e)=N(p^e)-8N(p^{e-3}),
$$
for any $e>3$. The estimate in \eqref{eq:222.2} therefore 
gives a heuristic asymptotic formula for $N_U(B)$ 
which is visibly a product of local densities.  
Among other things, we
have assumed that $\mathfrak{S}(R)$ is convergent in formulating this
heuristic. This is expected to be true for diagonal cubic surfaces
whose Picard group has rank $1$, but not in general.

One can make the transition from \eqref{eq:222.1} to \eqref{eq:222.2} 
completely rigorous if the Hardy--Littlewood heuristic produces a leading
term involving $P^\alpha$ with exponent $\alpha>1$. The outcome is  that to
go from counting points on the affine cone to counting projective points, one
merely replaces $N(p^e)$ by $N^*(p^e)$. For $\alpha=1$, however,  the
``renormalization'' procedure remains heuristic.  
Let
$$
\mathfrak{S}^*(R):=\sum_{q\leq R}q^{-4}S_q^*,
$$
where 
$$
S_q^*:=\sum_{\colt{1\leq a\leq q}{\hcf(a,q)=1}} 
\sum_{\colt{\ma{r}\bmod{q}}{\hcf(\ma{r},q)=1}} e(aC(\ma{r})/q).
$$
It is easily checked that $S_q^*$ is a multiplicative function of $q$, and
furthermore, that the corresponding version of Exercise \ref{ex:S-N} holds,
relating $S_{p^e}^*$ to $N^*(p^e)$. In fact, formally speaking, one
has
$$
\prod_p \sum_{e=0}^\infty p^{-4e}S_{p^e}^*= \prod_p \sigma_p,
$$
with $\sigma_p$ given by \eqref{eq:sigp*}.  
Bearing all of this in mind we will proceed under the bold assumption that 
 \eqref{eq:121.1} can be replaced by 
\begin{equation}  \label{eq:222.4}
N_U(B)= B\sigma_{\infty}\mathfrak{S}^*(B) +\err,
\end{equation}
where we have taken $\ve=1$ in the expressions for
$\mathfrak{S}(B^\ve)$ and $\mathfrak{S}^*(B^\ve)$.

We now turn to  a finer analysis of
$\mathfrak{S}^*(B)$, as $B\rightarrow \infty.$ Our task is to
determine the analytic properties of the corresponding Dirichlet series
\begin{equation}
  \label{eq:DS}
F(s):=\sum_{q=1}^\infty \frac{S_q^*}{q^{s}}
\end{equation}
for $s=\sigma+it\in\C$. Armed with this analysis we will ultimately
apply Perron's formula to obtain
an estimate for $\mathfrak{S}^*(R)$.
Using the multiplicativity of $q^{-s}S_q^*$ we deduce that
\begin{equation}
  \label{eq:Fs}
F(s)=\prod_p \sigma_p(s),\quad \sigma_p(s):=\sum_{e=0}^\infty p^{-es}S_{p^e}^*.
\end{equation}
In examining $F(s)$  it clearly
suffices to ignore the value of the factors $\sigma_p(s)$ at any finite collection
of primes $p$. With this in mind we will try and determine $\sigma_p(s)$ for
$p\not\in\mcal{P}$, where $\mcal{P}$ is given by \eqref{eq:P}.
Recall the definition \eqref{eq:Nq*} of $N^*(q)$.

\begin{ex}\label{ex:hensel}
Let $e\geq 1$ and let $p\not\in\mcal{P}$ be a prime. Use Hensel's lemma 
to show that $N^*(p^e)=p^{3e-3}N^*(p)$.
\end{ex}

It therefore follows from  Exercise \ref{ex:hensel} that
\begin{align*}
\sigma_p(s)&=
1+  \sum_{e=1}^\infty p^{-es}
\big(p^eN^*(p^e)-p^{3+e}N^*(p^{e-1})\big)
=
1- \frac{1}{p^{s-4}}+\frac{N^*(p)}{p^{s-1}},
\end{align*}
for any $p\not\in\mcal{P}$. Hence
\eqref{eq:N*} yields 
\begin{align*}
\sigma_p(s)
&=
1+ \frac{\delta_p(\ma{a})}{p^{s-3}}-\frac{\delta_p(\ma{a})}{p^{s-2}}-\frac{1}{p^{s-1}},
\end{align*}
where $\delta_p(\ma{a})$ is given by \eqref{eq:dell}.

\medskip

We now pursue our analysis in the special case $\ma{a}=(1,1,1,1)$ of 
the Fermat cubic surface \eqref{eq:S1}. 
Now it is clear from \eqref{eq:dell} that 
$\delta_p(1,1,1,1)=0$ if $p\equiv 2 \bmod{3}$ and 
$$
\delta_p(1,1,1,1)=3\nu_p(1,1,1,1)-3=6
$$
if $p\equiv 1 \bmod{3}$. 
Let $\lambda: \Z\rightarrow \C$ be the real Dirichlet
character of order $2$ defined by 
$$
\lambda(n):=\left\{
\begin{array}{ll}
(\frac{n}{3}), &\mbox{if $3\nmid n$,}\\
0, &\mbox{otherwise,}
\end{array}
\right.
$$
where $(\frac{n}{3})$ is the Legendre symbol.
Then we may write 
\begin{align*}
\sigma_p(s)
&= 
1+\frac{3(1+\lambda(p))}{p^{s-3}}-\frac{3(1+\lambda(p))}{p^{s-2}}-\frac{1}{p^{s-1}}\\
&=\Big(1-\frac{1}{p^{s-3}}\Big)^{-3}
\Big(1-\frac{\lambda(p)}{p^{s-3}}\Big)^{-3}
\Big(1+ O\Big( \frac{1}{p^{\min\{\sigma-2, 2\sigma-2\}}}\Big)\Big),
\end{align*}
for any $p\not\in\mcal{P}$.  Let $L(s,\lambda)$ denote the usual 
Dirichlet $L$-function associated to $\lambda$. When
$\ma{a}=(1,1,1,1)$ we have therefore succeeded in showing that
\begin{equation}
  \label{eq:181.1}
F(s)=\zeta(s-3)^{3}L(s-3,\lambda)^3G(s),
\end{equation}
where  $G(s)$ is a function that is holomorphic and bounded on the half-plane
$\sigma\geq 7/2+\delta$, for any $\delta>0$. 
For future reference we note that $G(4)$ has local factors
\begin{equation}
  \label{eq:G4}
G_p(4)=\left\{
\begin{array}{ll}
(1-\frac{1}{p})^7
(1+\frac{7}{p}+\frac{1}{p^{2}}),
&
\mbox{if $p\equiv 1 \bmod{3}$,}\\
(1-\frac{1}{p})^4(1+\frac{1}{p})^3(1+\frac{1}{p}+\frac{1}{p^2}),
&
\mbox{if $p\equiv 2 \bmod{3}$.}
\end{array}
\right.
\end{equation}
Although we will not prove it here, it can be deduced from \eqref{eq:sigp*}
and Hensel's lemma that
\begin{equation}
  \label{eq:G4.3}
G_3(4)=\frac{8}{27}\lim_{e\rightarrow \infty} 3^{-3e}N^*(3^e)=\frac{16}{27}.
\end{equation}
We are now ready for our application of Perron's formula, which we
will apply in the following form.

\begin{lem}\label{lem:perron}
Let $F(s)=\sum_{n=1}^{\infty}a_nn^{-s}$ be a Dirichlet series with
abscissa of absolute convergence $\sigma_a$.
Suppose that $x\not\in\Z$ and let $c>\sigma_a$. Then
we have
$$
\sum_{n\leq x}a_n =\frac{1}{2\pi i} \int_{c-iT}^{c+iT}
F(s)\frac{x^s}{s}\d s +O\Big(\frac{x^c}{T}\sum_{n=1}^\infty
\frac{|a_n|n^{-c}}{|\log (x/n)|}\Big),
$$
for any $T\geq 1$.
\end{lem}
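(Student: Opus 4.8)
The plan is to reduce the statement to the classical \emph{discontinuous integral}: for real $y>0$ with $y\neq 1$, for $c>0$, and for any $T\geq 1$,
\[
\frac{1}{2\pi i}\int_{c-iT}^{c+iT}\frac{y^s}{s}\,\d s = \de(y) + O\Big(\frac{y^c}{T|\log y|}\Big),
\]
where $\de(y)=1$ if $y>1$ and $\de(y)=0$ if $0<y<1$. Granting this, one inserts the Dirichlet series $F(s)=\sum_{n\geq 1}a_nn^{-s}$, integrates term by term, and applies the displayed evaluation with $y=x/n$.

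First I would prove the discontinuous integral by contour shifting. Fix $y\neq 1$ and, for a parameter $U>c$, integrate $y^s/s$ around the boundary of the rectangle with vertices $c\pm iT$ and $w\pm iT$, taking $w=U$ when $0<y<1$ and $w=-U$ when $y>1$. In the first case the integrand is holomorphic in the closed region, so Cauchy's theorem expresses $\int_{c-iT}^{c+iT}$ in terms of the other three sides: the far side $\Re s=U$ contributes $\ll Ty^{U}/U\to 0$ as $U\to\infty$, while each horizontal side contributes at most an absolute constant times $T^{-1}\int_c^\infty y^\sigma\,\d\sigma = y^c/(T|\log y|)$, giving the claim with $\de(y)=0$. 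In the second case the only enclosed singularity is the simple pole of $y^s/s$ at $s=0$, of residue $1$; now the far side $\Re s=-U$ contributes $\ll Ty^{-U}/U\to 0$ and the horizontal sides are again $\ll y^c/(T|\log y|)$, so the residue theorem yields the claim with $\de(y)=1$.

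Next I would return to the lemma itself. Since $c>\sigma_a$, the series $\sum_n a_nn^{-s}$ converges absolutely, hence (by the Weierstrass $M$-test) uniformly on the line $\Re s=c$, and $F(s)x^s/s$ is continuous on the compact segment $[c-iT,c+iT]$, which avoids $s=0$ because $c>0$. So I may interchange summation and integration:
\[
\frac{1}{2\pi i}\int_{c-iT}^{c+iT}F(s)\frac{x^s}{s}\,\d s = \sum_{n=1}^\infty a_n\cdot\frac{1}{2\pi i}\int_{c-iT}^{c+iT}\frac{(x/n)^s}{s}\,\d s.
\]
Applying the discontinuous integral with $y=x/n$ — legitimate for every $n$ because $x\notin\Z$ forces $n\neq x$ and $|\log(x/n)|>0$ — the main terms $\de(x/n)$ contribute $\sum_{n<x}a_n=\sum_{n\leq x}a_n$, while the error terms contribute
\[
O\Big(\sum_{n=1}^\infty |a_n|\frac{(x/n)^c}{T|\log(x/n)|}\Big) = O\Big(\frac{x^c}{T}\sum_{n=1}^\infty\frac{|a_n|n^{-c}}{|\log(x/n)|}\Big),
\]
which is exactly the claimed error term. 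I would also note that this sum is finite: $\inf_{n\geq 1}|\log(x/n)|>0$, since $|\log(x/n)|\to\infty$ with $n$ and is positive for each individual $n$, so the sum is dominated by a constant multiple of $\sum_n|a_n|n^{-c}<\infty$.

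The argument is essentially formal and I do not anticipate a genuine obstacle, Perron's formula being entirely classical. The only points demanding a little care are the uniform decay of the far vertical side of the rectangle as $U\to\infty$ and the convergence of $\int_c^{\pm\infty}y^\sigma\,\d\sigma$ with the stated size; and it is worth emphasising that the hypothesis $x\notin\Z$ is used precisely to ensure that no term $n=x$ appears and that $\log(x/n)$ never vanishes.
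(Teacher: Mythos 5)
Your proof is correct and follows essentially the same route as the paper: the paper's proof simply quotes the truncated discontinuous-integral identity (dismissing the contour shift as ``a straightforward exercise in contour integration'') and then applies it termwise to the Dirichlet series. You carry out the contour argument explicitly and then perform the same termwise application; the $x=1$ case of the identity, which the paper records, is correctly noted by you to be unneeded since $x\notin\Z$ rules out $x/n=1$.
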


\begin{proof}
Let $c>0$. The lemma follows from the identity
$$
\frac{1}{2\pi i} 
\int_{c-iT}^{c+iT}
\frac{x^s}{s}\d s=\left\{
\begin{array}{ll}
1+ O(x^c(T |\log x|)^{-1}), & \mbox{if $x>1$,}\\
\frac{1}{2}+ O(cT^{-1}), & \mbox{if $x=1$,}\\
O(x^c(T |\log x|)^{-1}), & \mbox{if $0<x<1$,}
\end{array}
\right.
$$
which is a straightforward exercise in contour integration.
\end{proof}

In our case we have $a_q=S_q^*$ and we are interested in the
Dirichlet series $F(s+4)$, in the notation of \eqref{eq:DS}. 
In order to apply Lemma \ref{lem:perron} we will need an upper
bound for this quantity.  For our purposes the trivial upper bound
$a_q\ll q^5$ is sufficient.
Thus the Dirichlet series $F(s+4)$ is absolutely 
convergent for $\sigma> 2$. Taking $c=2+\ve$ for any $\ve>0$, we
may deduce from Lemma \ref{lem:perron} that
$$
\mathfrak{S}^*(B)=\frac{1}{2\pi i} \int_{c-iT}^{c+iT}
F(s+4)\frac{B^s}{s}\d s +O\Big(\frac{B^{c}}{T}\sum_{n=1}^\infty
\frac{1}{n^{1+\ve}|\log (B/n)|}\Big),
$$
for any $T\geq 1$ and any $B\not\in\Z$.  It is not hard to see that
the error term here is 
$ \ll T^{-1} B^{c}$.
We now apply Cauchy's residue theorem to the rectangular contour $\mcal{C}$ 
joining  ${c'-iT}$, ${c'+iT}$,
${c+iT}$ and ${c-iT}$, where $c'=-1/2+\ve$.
The relation \eqref{eq:181.1} implies that in this
region $F(s+4)B^s/s$ has a unique pole at $s=0$, and it is a pole of
order $4$. It has residue 
$$
\Res_{s=0}\frac{F(s+4)B^s}{s}= \frac{L(1,\lambda)^3G(4)P(\log B)}{3!},
$$
where $P\in\R[x]$ is a monic polynomial of degree $3$. 
Putting all of this together we have therefore shown that
\begin{equation}
  \label{eq:SR1}
\mathfrak{S}^*(B)=
\frac{L(1,\lambda)^3G(4)P(\log B)}{3!} +O(E(B)),
\end{equation}
where 
$$
E(B)
= \frac{B^{c}}{T}
+
\Big(\int_{c'-iT}^{c'+iT}+\int_{c'-iT}^{c-iT}+ \int_{c+iT}^{c'+iT}\Big)
\Big|H(s)^3\frac{B^s}{s}\Big|\d s,
$$
for any $T\geq 1$, and where
$H(s)=\zeta(s+1)L(s+1,\lambda)$.  Here we have used the fact that $G(s+4)$ is 
bounded on the half-plane $\Re e(s)\geq c'$.

To make the analysis simpler, it will be convenient to proceed under
the assumption that the Lindel\"of hypothesis holds for $\zeta(s)$,
and also for the Dirichlet $L$-function $L(s,\lambda)$. This could be
avoided at the cost of extra effort, but there seems no harm in
supposing it here. Thus we may assume the bounds
$$
\zeta(\sigma+i t)\ll_\ve |t|^{\varepsilon},
\quad
L(\sigma+i t,\lambda)\ll_\ve
|t|^{\varepsilon},
$$
for any $\sigma\in[1/2,1]$ and any $|t|\geq 1$.  It therefore 
follows that
$$
  H(\sigma+it)\ll_\varepsilon \left\{
\begin{array}{ll}
|t|^{\varepsilon}, &\mbox{if $-1/2\leq \sigma\leq 0$,}\\ 
1, &\mbox{if $\sigma>0$,}
\end{array}
\right.
$$
for any $|t|\geq 1$, which gives
\begin{align*}
\int_{c'-iT}^{c-iT}\Big|H(s)^3\frac{B^s}{s}\Big|\d s
&\ll_\ve 
\int_{c'}^{c}B^\sigma T^{-1+3\ve}\d \sigma
\ll_\ve B^c T^{-1+3\ve}.
\end{align*}
One obtains the same estimate for the contribution from the remaining
horizontal contour. Turning to vertical integral, we find that
\begin{align*}
\int_{c'-iT}^{c'+iT}
\Big|H(s)^3\frac{B^s}{s}\Big|\d s 
&\ll
B^{c'} \int_{-T}^{T} \frac{|H(1/2+\ve+it)|^3}{1+|t|}  \d t \\
&\ll_\ve B^{c'} \int_{-T}^{T} (1+|t|)^{3\ve-1}  \d t \\
&\ll_\ve B^{c'}T^{3\ve},
\end{align*}
under the assumption of Lindel\"of hypothesis.
This shows that
$$
E(B)
\ll_\ve B^{\ve}T^{3\ve}\Big(\frac{B^{2}}{T}+\frac{1}{B^{1/2}}\Big),
$$
for any $T\geq 1$. Taking $T$ sufficiently large, we therefore conclude 
from \eqref{eq:SR1} that
$$
\mathfrak{S}^*(B)=
\frac{L(1,\lambda)^3G(4)P(\log B)}{3!} +O(B^{-\D}),
$$
for some $\Delta>0$.

We are now ready to return to the 
Hardy--Littlewood major arc analysis which led us to
\eqref{eq:222.4}. Substituting in our estimate for
$\mathfrak{S}^*(B)$, we conclude that
\begin{equation}
  \label{eq:fermat-a}
N_{U_1}(B)\sim 
c_1B (\log B)^3
\end{equation}
where $U_1\subset S_1$ is the usual open subset of the Fermat surface
\eqref{eq:S1}, and
$$
c_1= \frac{\sigma_{\infty} L(1,\lambda)^3G(4)}{3!}.
$$
Now it follows from the class number formula that
$
L(1,\lambda)=\pi \sqrt{3}/9.
$
Hence, on combining this with \eqref{eq:G4} and \eqref{eq:G4.3}, our
heuristic argument has led us to the expectation that
\eqref{eq:fermat-a} holds, with
$$
c_1=\frac{\sigma_\infty 2^{4} \pi^3\sqrt{3}}{3! 3^{8}}
\prod_{p \equiv 1 \bmod{3}}
\hspace{-0.2cm}
\Big(1-\frac{1}{p}\Big)^7\Big(1+\frac{7}{p}+\frac{1}{p^{2}}\Big)
\hspace{-0.2cm}
\prod_{p \equiv 2 \bmod{3}}
\hspace{-0.2cm}
\Big(1-\frac{1}{p^3}\Big)\Big(1-\frac{1}{p^2}\Big)^3.
$$
The exponents of $B$ and $\log B$ in \eqref{eq:fermat-a} agree with
the Manin 
conjecture, since we have already seen in \S \ref{s:lines} that the
Picard group of $S_1$ has rank $4$.  

It is interesting to compare our analysis with the work of Peyre and
Tschinkel \cite{p-t2}, who calculate the 
leading constant $c_{\mathrm{Peyre}}$ in Peyre's refinement
\cite{MR1340296} of the conjectured asymptotic formula for $N_{U_1}(B)$.
It turns out that 
$$
c_{\mathrm{Peyre}}=\gamma(S_1) c_1,
$$ 
with $\gamma(S_1)=7/3$. For a general non-singular cubic surface
$S\subset \bfP^3$, the constant $\gamma(S)$
is defined to be the volume
$$
\gamma(S):=
\int_{\Lambda_{\mathrm{eff}}^\vee(S)} e^{-\langle-K_{S},\ma{t} \rangle}\d\ma{t}.
$$
Thus, in general terms,  $\gamma(S)$ measures the volume of the polytope obtained by
intersecting the dual of $\Lambda_{\mathrm{eff}}(S)$ with a certain affine hyperplane.
In particular $\gamma(S)\in \Q$ for any non-singular cubic surface
$S$, and $\gamma(S)=1$ if and only if the corresponding Picard group has rank $1$.


\begin{ex}
Let $S_2$ denote the surface \eqref{eq:111.5}.
Using a similar argument, show that one expects an asymptotic formula
of the shape $N_{S_2}(B)\sim c_2 B$ for some constant $c_2\geq 0$.
Check your answer with the heuristic formula obtained by Heath-Brown \cite{hb:density}.
\end{ex}

\section{The $\textbf{A}_1$ del Pezzo surface of degree $6$}\label{s:a1}

In this section we will establish Theorem \ref{t:a1}. Any line in 
$\bfP^6$ is defined by the intersection of $5$ hyperplanes.  
It is not hard to see that the equations
\begin{equation}
  \label{eq:a1_U}
\left\{
\begin{array}{l}
x_1=x_2=x_3=x_5=x_6=0, \quad 
x_1=x_3=x_4=x_5=x_6=0,\\
x_3=x_5=x_6=x_1+x_4=x_1+x_2=0,
\end{array}
\right.
\end{equation}
all define lines contained in the singular del Pezzo
surface $S$ given by \eqref{eq:a1}. Table \ref{t:class_deg6} ensures that these are
the only lines contained in $S$.
By definition $U$ is the open subset of $S$ on which none of these
equations hold.  We begin by establishing the following result.

\begin{lem}\label{lem:a1.step1}
We have
$$
\nub=2M(B) +O(B),
$$
where $M(B)$ denotes the number of 
$\x\in \Z^7$ such that
\begin{equation}\label{eq:a1'}
\begin{split}
x_1^2-x_2x_4&=x_1x_5-x_3x_4=x_1x_3-x_2x_5=x_1x_6-x_3x_5\\
&=x_2x_6-x_3^2=x_4x_6-x_5^2=x_1^2-x_1x_4+x_5x_7\\
&=x_1^2-x_1x_2-x_3x_7=x_1x_3-x_1x_5+x_6x_7=0,
\end{split}
\end{equation}
with $\hcf(x_1,\ldots,x_7)=1$, 
$0<|x_1|,x_2,x_3,x_4,|x_5|,x_6 \leq B$ and $|x_7|\leq B$.
\end{lem}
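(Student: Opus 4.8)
The plan is to convert the count of projective points on $U$ into a count of primitive integer vectors, strip off a low-dimensional degenerate locus, and then normalise signs using the symmetries of the defining equations. By the definition of the height, every point of $U(\Q)$ is represented by a primitive $\x\in\Z^7$ unique up to sign, so $N_U(B)=\tfrac12\mathcal N(B)$, where $\mathcal N(B)$ counts the primitive $\x$ with $|\x|\le B$ lying on the affine cone over $S$ but on none of the three lines \eqref{eq:a1_U}. The substitution $\x\mapsto(x_1,-x_2,x_3,-x_4,-x_5,-x_6,-x_7)$ is a unimodular map that preserves $|\x|$ and primitivity, carries the system \eqref{eq:a1} into \eqref{eq:a1'}, and permutes the three lines among a similar triple, all lying in $\{x_6=0\}$; so $\mathcal N(B)$ equals the number of primitive solutions of \eqref{eq:a1'} with $|\x|\le B$ off these lines.

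Next I would dispose of the degenerate locus $x_1x_2x_3x_4x_5x_6=0$. A routine inspection of \eqref{eq:a1'} shows that for each $i\in\{1,\dots,6\}$ the curve cut out on the surface by $x_i=0$ is a union of some of the three lines together with finitely many plane conics, for instance $\{x_1=x_2=x_3=x_7=0,\ x_4x_6=x_5^2\}$; since a plane conic carries $O(B)$ rational points of height at most $B$, discarding this locus changes $\mathcal N(B)$ by only $O(B)$. Hence it suffices to count the primitive solutions of \eqref{eq:a1'} with $x_1\cdots x_6\ne0$, which are automatically off the three lines.

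On this main locus the signs are almost rigid: $x_1^2=x_2x_4$, $x_3^2=x_2x_6$ and $x_4x_6=x_5^2$ force $x_2,x_4,x_6$ to share a common sign $\sigma$ and none of $x_1,\dots,x_6$ to vanish, then $x_1x_3=x_2x_5$ fixes $\operatorname{sign}x_1$ in terms of $\sigma$, $\operatorname{sign}x_3$, $\operatorname{sign}x_5$, and the three equations containing $x_7$ determine $x_7$. So a main-locus solution is recorded by its absolute values together with the triple $(\sigma,\operatorname{sign}x_3,\operatorname{sign}x_5)\in\{\pm1\}^3$. Both $\x\mapsto-\x$ and $g\colon\x\mapsto(x_1,x_2,-x_3,x_4,-x_5,x_6,-x_7)$ are bijections of this solution set preserving $|\x|$ and primitivity — the latter because $g$ leaves the system \eqref{eq:a1'} and each of the three lines invariant — and they act on the sign triple by $(\sigma,s_3,s_5)\mapsto(-\sigma,-s_3,-s_5)$ and $(\sigma,s_3,s_5)\mapsto(\sigma,-s_3,-s_5)$ respectively. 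Thus the eight sign sectors fall into two orbits of four sectors of equal cardinality, with representatives $(+,+,+)$ and $(+,+,-)$; since $M(B)$ counts exactly the main-locus solutions with $\sigma=+$ and $\operatorname{sign}x_3=+$, i.e.\ precisely these two sectors, we obtain $\mathcal N(B)=4M(B)+O(B)$ and therefore $N_U(B)=2M(B)+O(B)$.

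Everything here is elementary; the one step requiring genuine care is the sign and symmetry bookkeeping that yields the constant $2$ — in particular verifying that $g$ really preserves both the surface \eqref{eq:a1'} and each of its three lines, so that $g$ together with $\x\mapsto-\x$ may be used to collapse the eight sign sectors down to the two counted by $M(B)$, and checking that the degenerate curves genuinely have degree at most $2$ (or in any case contribute $O(B)$).
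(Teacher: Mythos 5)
Your proposal is correct and follows essentially the same route as the paper: pass to primitive representatives (introducing the $\tfrac12$), show the locus $x_1\cdots x_6=0$ contributes $O(B)$ via lines and low-degree conics, apply a coordinate negation to pass from \eqref{eq:a1} to \eqref{eq:a1'}, and then use sign symmetries of \eqref{eq:a1'} to restrict to $x_2,x_3,x_4,x_6>0$, picking up the remaining factor of $4$. Your group-theoretic bookkeeping of the eight sign sectors via $\langle -1,g\rangle$ is a cleaner formulation of the two sign-absorbing bijections the paper invokes (whose description in terms of which coordinates absorb the signs is in fact slightly garbled in the paper), and you supply the explicit unimodular transformation $\x\mapsto(x_1,-x_2,x_3,-x_4,-x_5,-x_6,-x_7)$ that the paper leaves tacit.
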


\begin{proof}
In view of the fact that $\x$ and $-\x$ represent the same point in
$\bfP^6$, we have
$$
\nub=\frac{1}{2}\#\{\x\in Z^7: |\x|\leq B, ~\mbox{\eqref{eq:a1} holds,
but \eqref{eq:a1_U} does not}\},
$$
where $Z^7$ denotes the set of primitive vectors in
$\Z^7$. We need to consider the contribution to the right hand side from
points such that $x_i=0$, for some $1\leq i\leq 7$. Let us begin by
considering the contribution from vectors $\x\in Z^7$ for which
$x_1=0$. But then the equations in \eqref{eq:a1} imply that
$x_2x_4=0$. If $x_2=0$, it is straightforward to check that either 
$\x$ satisfies the first system of
equations in \eqref{eq:a1_U}, or else 
$$
x_0=x_2=x_3=x_7=0, \quad x_4x_6=x_5^2.
$$
Such points are therefore confined to a plane conic. We therefore obtain $O(B)$
points overall with $x_1=x_2=0$. If on the other hand $x_1=x_4=0$,
then a similar analysis shows that there are $O(B)$ points in this
case too. In view of the first equation in \eqref{eq:a1}, the
contribution from vectors $\x$ such that $x_2x_4=0$ is also $O(B)$. 
Let us now consider the contribution from vectors $\x$ such that
$x_3=0$ and $x_1x_2x_4\neq 0$. 
It is easily checked that the only such vectors have $x_5=x_6=0$ and
$x_1+x_4=x_1+x_2=0$, and so must lie on a line contained in $S$. 
Finally, arguing in a similar fashion, we see that there are no points contained in
$S$ with $x_5x_6=0$ and $x_1x_2x_3x_4\neq 0$. 
We have therefore shown that
$$
\nub=\frac{1}{2}\#\{\x\in Z^7 : x_1\cdots x_6\neq 0, ~|\x|\leq B, ~\mbox{\eqref{eq:a1'}
  holds}\} +O(B).
$$
Here we have noted that there is an obvious unimodular transformation
that takes the set of equations in \eqref{eq:a1} into \eqref{eq:a1'}.

We would now like to restrict our attention to positive values of
$x_1,\ldots,x_6$. The equations for $S$ imply that $x_2,x_4,x_6$ all share the same
sign. On absorbing the minus sign into $x_1$ there is a clear
bijection between solutions to \eqref{eq:a1'} with 
$x_2,x_4,x_6<0$ and solutions with $x_2,x_4,x_6>0$.  
We  choose to count the former. Arguing similarly, 
by absorbing the minus signs into $x_1$ and $x_7$, we see that 
there is a bijection between the solutions to \eqref{eq:a1'} with
$x_3<0$ and $x_2,x_4,x_6>0$, and the
solutions with $x_2, x_3,x_4,x_6>0$.  Fixing our attention on the latter set of points,
we therefore complete the proof of Lemma \ref{lem:a1.step1}.
\end{proof}

Let $\tS$ denote the minimal desingularisation of the surface $S$. 
By determining the Cox ring associated to $\tS$,
Derenthal \cite{der1} has calculated the universal torsor
above $\tS$. In this setting it is defined by a single equation
\begin{equation}
  \label{eq:ut-a1}
  s_1y_1-  s_2y_2+  s_3y_3=0,
\end{equation}
embedded in $\A^{7}$. In particular one of the variables does not
appear explicitly in the equation.

\subsection{Elementary considerations}\label{s.301.3}

As promised in \S \ref{s:ut}, we proceed to show how $\nub$ can be related to a count of the integer
points on the corresponding universal torsor, which in this case is
given by \eqref{eq:ut-a1}. Our deduction of this
fact is completely elementary, and is based on an analysis of the integer
solutions to the system of equations \eqref{eq:a1'}. It is still somewhat
mysterious as to how or why this rather low-brow process should
ultimately lead to the same outcome! Typical of the facts that we
will employ is the following.

\begin{ex}\label{ex:301.1}
Show that the general solution of the equation $xy=z^2$ is
$$
x=a^2c, \quad y=b^2c,\quad z=abc,
$$
with $|\mu(c)|=1$.
\end{ex}

Given $s_0\in \R$ and $\ma{s}=(s_1,s_2,s_3),\ma{y}=(y_1,y_2,y_3)\in\R^3$, define 
\begin{equation}
  \label{eq:height-a1}
\Psi(s_0,\ma{s},\ma{y}):=\max\big\{
|s_0^3s_1^2s_2^2s_3^2|, ~|y_1y_2y_3|,~
|s_0s_1^2y_1^2|, ~|s_0s_2^2y_2^2|
\big\}.
\end{equation}
We are now ready to record our translation of the problem to the
universal torsor.

\begin{lem}\label{lem:301.2}
We have
$$
\nub=2
\#\left\{(s_0,\ma{s},\ma{y})\in \Z^7:
\begin{array}{l}
\Psi(s_0,\ma{s},\ma{y})\leq B, ~\mbox{\eqref{eq:ut-a1}  holds},\\ 
s_0,s_1,s_2,s_3,y_1>0 ,\\
\hcf(y_i,s_0s_js_k)=1,\\
\hcf(s_i,s_j)=1
\end{array}
\right\}+O(B),
$$
with $i,j,k$ a permutation of $1,2,3$ in the coprimality conditions.
\end{lem}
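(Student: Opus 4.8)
The plan is to derive the lemma from Lemma~\ref{lem:a1.step1}. Since that lemma gives $\nub=2M(B)+O(B)$, it suffices to show that $M(B)$ agrees, up to an error of size $O(B)$, with the displayed count of integral points on the torsor \eqref{eq:ut-a1}. Thus I fix a vector $\x\in\Z^7$ contributing to $M(B)$ --- primitive, satisfying \eqref{eq:a1'}, with $x_2,x_3,x_4,x_6\in(0,B]$, $x_1,x_5\in[-B,B]\setminus\{0\}$ and $|x_7|\le B$ --- and aim to manufacture from it a torsor point $(s_0,\ma s,\ma y)$ in a way that will turn out to be reversible.

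The first step uses only the first six equations of \eqref{eq:a1'}: they assert precisely that every $2\times 2$ minor of the symmetric matrix $\left(\begin{smallmatrix}x_2&x_1&x_3\\ x_1&x_4&x_5\\ x_3&x_5&x_6\end{smallmatrix}\right)$ vanishes, so, since $x_2,x_4,x_6>0$, this matrix has rank exactly one. In particular $x_2x_4=x_1^2$, $x_2x_6=x_3^2$ and $x_4x_6=x_5^2$. I would feed these into Exercise~\ref{ex:301.1} and run a short descent, peeling off at each stage the greatest common divisors among the relevant $x_i$. This yields positive integers $s_0,s_1,s_2,s_3$ together with integers $y_1>0$ and $y_2,y_3$, uniquely pinned down by $\x$ once --- exactly as in the proof of Lemma~\ref{lem:a1.step1} --- one agrees to absorb all signs into the coordinates other than $s_0,s_1,s_2,s_3,y_1$, and such that each of $x_1,\dots,x_6$ becomes a fixed monomial in $s_0,\ma s,\ma y$. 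Tracking the descent, primitivity of $\x$ becomes exactly the conditions $\hcf(s_i,s_j)=1$ and $\hcf(y_i,s_0s_js_k)=1$; conversely any $(s_0,\ma s,\ma y)$ with these coprimalities produces a primitive solution of the first six equations.

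For the second step I would substitute this parametrisation into the remaining three equations of \eqref{eq:a1'}. On the rank-one locus, with $x_3\ne0$ (which holds since $x_3>0$), one checks that these three equations are jointly equivalent to the single identity $x_7=x_1(x_1-x_2)/x_3$; expressing the right side through $s_0,\ma s,\ma y$ displays it as a monomial in these variables divided by one further factor, and the demand that this quotient be an integer is --- granted the coprimalities already extracted --- equivalent to the linear relation \eqref{eq:ut-a1}, i.e. $s_1y_1-s_2y_2+s_3y_3=0$, after which $x_7$ is simply the resulting monomial. Conversely, from $(s_0,\ma s,\ma y)$ on \eqref{eq:ut-a1} with the coprimality and positivity conditions of the lemma, the monomials reconstruct a vector $\x\in\Z^7$ solving \eqref{eq:a1'}. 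It remains to match the size conditions: for the $\x$ so obtained one verifies that $\max_i|x_i|$ equals $\Psi(s_0,\ma s,\ma y)$ from \eqref{eq:height-a1}, the four displayed monomials capturing the largest coordinates while the remaining $|x_i|$ are controlled by these via the relation \eqref{eq:ut-a1}, so that $|\x|\le B$ becomes $\Psi(s_0,\ma s,\ma y)\le B$. Assembling the three steps gives a bijection --- outside a set of size $O(B)$ --- between the vectors counted by $M(B)$ and the torsor points counted on the right, and the lemma follows on invoking Lemma~\ref{lem:a1.step1} once more.

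The delicate point, where all the care is needed, is the middle step: extracting in the descent exactly the right list of coprimality conditions, so that the integrality of $x_7$ is genuinely equivalent to the single relation $s_1y_1-s_2y_2+s_3y_3=0$ --- neither weaker nor stronger --- and then verifying that the correspondence really is a bijection on the non-degenerate part. The degenerate loci that must be swept into the $O(B)$ error are benign: vectors with some $x_ix_j=0$ were already discarded in Lemma~\ref{lem:a1.step1}, while vectors with $x_7=0$ force $x_1=x_2=x_4$, and then primitivity makes these three equal to a common perfect square, so that such vectors number only $O(B)$; what is left is the boundary slack in the height comparison of the third step.
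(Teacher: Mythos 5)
Your observation that the first six equations of \eqref{eq:a1'} are the vanishing of the $2\times 2$ minors of the symmetric matrix $\bigl(\begin{smallmatrix}x_2&x_1&x_3\\x_1&x_4&x_5\\x_3&x_5&x_6\end{smallmatrix}\bigr)$ is correct and explains the first stage of the paper's descent, which puts $x_1=a_1a_2a_4$, $x_2=a_1a_2^2$, $x_3=a_1a_2a_3$, $x_4=a_1a_4^2$, $x_5=a_1a_3a_4$, $x_6=a_1a_3^2$ with $a_1>0$ squarefree. But the remainder of your first step --- passing from $(a_1,a_2,a_3,a_4)$ to the full seven-tuple $(s_0,\ma s,\ma y)$ --- cannot be carried out ``using only the first six equations'', and this is a genuine gap. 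For one thing, $y_3$ appears in none of $x_1,\dots,x_6$, so the minors give no handle on it and the coprimality $\hcf(y_3,s_0s_1s_2)=1$ certainly cannot be deduced in that step: the claim that ``primitivity of $\x$ becomes exactly the conditions $\hcf(y_i,s_0s_js_k)=1$'' is circular for $i=3$. More seriously, even $(s_0,s_1,s_2,s_3,y_1,y_2)$ is not pinned down by the rank-one data. At a prime $p$ with $p\nmid a_1a_4$ and $p\mid a_2a_3$, the monomial identities $x_4=s_0s_2^2y_2^2$ and $x_6=s_0^3s_1^2s_2^2s_3^2$ force $v_p(s_0)=v_p(s_2)=v_p(y_2)=0$, hence $v_p(a_2)=v_p(s_1)+v_p(y_1)$ and $v_p(a_3)=v_p(s_1)+v_p(s_3)$; the coprimalities $\hcf(s_1,s_3)=1$ and $\hcf(y_1,s_3)=1$ then force $v_p(s_3)=0$, $v_p(s_1)=v_p(a_3)$, $v_p(y_1)=v_p(a_2)-v_p(a_3)$, which is possible only if $v_p(a_2)\geq v_p(a_3)$. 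That inequality is not a consequence of the rank-one relations --- $(a_1,a_2,a_3,a_4)=(1,p,p^2,1)$ gives a perfectly good rank-one matrix for which no admissible factorisation exists --- and is imposed only by the three omitted equations, which assert $a_1a_2^2a_4=a_1a_2a_4^2-a_3x_7$ and so both force $x_7=x_1(x_1-x_2)/x_3$ to be an integer \emph{and} govern this divisibility.

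This is precisely why the paper's descent interleaves all nine equations rather than treating the rank-one minors and the torsor equation as two independent packages: from $a_1a_2^2a_4=a_1a_2a_4^2-a_3x_7$ it first deduces $a_1\mid a_3$, then splits $a_2$, $a_4$, $a_3$ according to how they divide $a_3/a_1$ versus $x_7$, and only then does \eqref{eq:ut-a1} emerge together with the complete list of coprimalities. Your step-two observations are otherwise sound --- on the rank-one locus with $x_1x_3\neq0$ the three residual equations are indeed all equivalent to $x_7=x_1(x_1-x_2)/x_3$, and given $\hcf(y_1,s_3)=\hcf(y_2,s_3)=1$ that integrality is exactly \eqref{eq:ut-a1} --- and the degenerate loci and the height comparison are correctly sketched; but the idea you have pushed into a footnote is the one that does most of the work.
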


\begin{proof}
Let $\x\in \Z^7$ be a primitive vector counted by $M(B)$, as defined
in the statement of Lemma \ref{lem:a1.step1}. Combining the first
equation in \eqref{eq:a1'} with Exercise \ref{ex:301.1} we see that
$$
x_1=a_1a_2a_4,\quad x_2=a_2^2 a_1,\quad x_4=a_4^2a_1,
$$
for integers $a_1,a_2,a_4$ such that $a_1,a_2>0$ and 
$$
|\mu(a_1)|=\hcf(a_1\hcf(a_2,a_4)^2,x_3,x_5,x_6,x_7)=1.
$$
Inserting this into the equation $x_2x_6=x_3^2$ we deduce that
$a_1a_2\mid x_3$, whence 
$$
x_3=a_1a_2a_3, \quad x_6=a_1a_3^2, 
$$
for a positive integer $a_3$ such that 
$$
|\mu(a_1)|=\hcf(a_1\hcf(a_2,a_4)^2,a_1a_3\hcf(a_2,a_3),x_5,x_7)=1.
$$
Substituting this into the equation $x_4x_6=x_5^2$, we deduce
that 
$$
x_5=a_1a_3a_4,
$$
with 
\begin{equation}
  \label{eq:152}
|\mu(a_1)|=\hcf(a_1,x_7)=
\hcf(a_2,a_3,a_4,x_7)=1.
\end{equation}
Note that the second equation in \eqref{eq:a1'} implies that $x_1,x_5$
must share the same sign, which here is the sign of $a_4$.
The equations $x_1x_5=x_3x_4$, $x_1x_3=x_2x_5$ and
$x_1x_6=x_3x_5$ reveal no new information.
Turning instead to the equation
$x_1^2=x_1x_4-x_5x_7$, we obtain
\begin{equation}
  \label{eq:142.1}
a_1a_2^2a_4=a_1a_2a_4^2-a_3x_7.
\end{equation}
The coprimality conditions imply that
$a_1 \mid a_3$. Moreover, we deduce from this equation that $a_2a_4\mid a_3x_7/a_1$. We may therefore write
$$
a_2=a_{23}a_{27},\quad a_4=a_{43}a_{47},
$$
for integers $a_{2i},a_{4i}$, with $i=3,7$, such that
$a_{2i},a_{43},|a_{47}|>0$, and 
$$
a_1a_{23}a_{43}\mid a_3,\quad a_{27}a_{47}\mid x_7.
$$
Thus there exist further integers $b_3,a_7$ with $b_3>0$,  such that
$$
a_{3}=a_1a_{23}a_{43}b_3,\quad x_7=a_{27}a_{47}a_7,
$$
with \eqref{eq:152} and \eqref{eq:142.1} becoming
$$
|\mu(a_1)|=\hcf(a_1,a_{27}a_{47}a_7)=
\hcf(a_{23}a_{27},a_{23}a_{43}b_3,a_{43}a_{47},a_{27}a_{47}a_7)=1,
$$
and 
$$
a_{23}a_{27}=a_{43}a_{47}-b_3a_7,
$$
respectively.
The final two equations are redundant.
Let us write $d$ for the highest common factor of
$a_{23},a_{43},b_3$. Thus 
$$
a_{23}=da_{23}',\quad a_{43}=da_{43}',\quad b_{3}=db_{3}',
$$
for positive integers $d,a_{23}',a_{43}',b_3'$. On making these
substitutions the equation remains the same, but with appropriate
accents added, whereas the coprimality conditions become
\begin{align*}
|\mu(a_1)|=\hcf(da_1,a_{27}a_{47}a_7)
&=\hcf(a_{23}'a_{27},a_{23}'a_{43}'b_3',a_{43}'a_{47},a_{27}a_{47}a_7)\\
&=\hcf(a_{23}',a_{43}',b_{3}')\\
&=1.
\end{align*}

Now any $n\in\N$ can be written uniquely in the form $n=ab^2$ for
$a,b\in\N$ such that $|\mu(a)|=1$. We may therefore make the change of variables 
$$
(s_{0}; s_1,s_2,s_3;y_1,y_2,y_3)
=(a_1d^2;a_{23}',a_{43}',b_3'; a_{27},a_{47},a_7). 
$$
Bringing everything together, we have therefore established the 
existence of $(s_0,\ma{s},\ma{y})\in\Z^7$ such that
\eqref{eq:ut-a1} holds, with
\begin{equation}
  \label{eq:pos}
 s_{0},s_1,s_2, s_3,y_1>0,
\end{equation}
and 
$$
\hcf(s_{0},y_1y_2y_3)=\hcf(s_1,s_2,s_3)=
\hcf(s_1y_1,s_2y_2, s_1s_2s_3,y_1y_2y_3)=1.
$$
Note that $y_2$ is automatically non-zero for $s_0,\ma{s},\ma{y}$
satisfying the remaining conditions.
Once combined with \eqref{eq:ut-a1}, it is easy to check that the
latter coprimality conditions are equivalent to the conditions 
\begin{equation}\label{eq:coprim-a1}
\left\{
\begin{array}{l}
\hcf(y_1,s_0s_2s_3)=\hcf(y_2,s_0s_1s_3)=
\hcf(y_3,s_0s_1s_2)=1,\\
\hcf(s_1,s_2)=\hcf(s_1,s_3)=\hcf(s_2,s_3)=1,
\end{array}
\right.
\end{equation}
that appear in the statement of the lemma.

At this point we may summarise our argument as follows.
Let $\mcal{T} \subset \Z^7$ denote the set of
$(s_0,\ma{s},\ma{y}) \in \Z^7$  such that 
\eqref{eq:ut-a1}, \eqref{eq:pos} and \eqref{eq:coprim-a1} hold.
Then for any primitive vector $\x$ counted by $M(B)$,
we have shown that there exists
$(s_0,\ma{s},\ma{y}) \in \mcal{T}$ such that
$$
\left\{
\begin{array}{l}
x_1 = s_0s_1s_2y_1y_2,\\
x_2 = s_0s_1^2y_1^2, \\
x_3 = s_0^2s_1^2s_2s_3y_1,\\
x_4 = s_0s_2^2y_2^2,\\
x_5 = s_0^2s_1s_2^2s_3y_2,\\
x_6 = s_0^3s_1^2s_2^2s_3^2,\\
x_7 = y_1y_2y_3.
\end{array}
\right.
$$
Conversely, we leave it as an exercise to check that any
$(s_0,\ma{s},\ma{y}) \in \mcal{T}$ produces a primitive point
$\x\in\Z^7$ such that \eqref{eq:a1'} holds,
with 
$$
|x_1|,x_2,x_3,x_4,|x_5|,x_6>0.
$$
We may now conclude
that $M(B)$ is equal to the number of $(s_0,\ma{s},\ma{y}) \in
\mcal{T}$ such that 
$$
\max_{i=1,2}\big\{|s_0s_1s_2y_1y_2|, ~|s_0s_i^2y_i^2|, ~
|s_0^2s_1s_2s_3s_iy_i|,~|s_0^3s_1^2s_2^2s_3^2|, ~|y_1y_2y_3|\big\} \leq B.
$$
In view of the fact that
$|s_0s_1s_2y_1y_2|=\sqrt{|s_0s_1^2y_1^2|}\sqrt{|s_0s_2^2y_2^2|}$, and
furthermore, 
$|s_0^2s_1s_2s_3s_iy_i|=\sqrt{|s_0^3s_1^2s_2^2s_3^2|}\sqrt{|s_0s_i^2y_i^2|}$,
it follows that this height condition is equivalent to 
$\Psi(s_0,\ma{s},\ma{y})\leq B$
for any $(s_0,\ma{s},\ma{y}) \in \mcal{T}$,
where $\Psi$ is given by \eqref{eq:height-a1}.
In summary we have therefore shown that $M(B)$ is equal to the number of 
$(s_0,\ma{s},\ma{y}) \in \mcal{T}$ such that
$\Psi(s_0,\ma{s},\ma{y})\leq B$. 
Once inserted into Lemma~\ref{lem:a1.step1},
this completes the proof of Lemma~\ref{lem:301.2}.
\end{proof}

At first glance it might seem a little odd that 
the height restriction $|s_0s_3^2y_3^2|\leq B$ doesn't explicitly
appear in the lemma. However, \eqref{eq:ut-a1} implies that 
$0<s_1y_1=s_2y_2-s_3y_3$ for any $(s_0,\ma{s},\ma{y}) \in
\mcal{T}$, whence the restriction $\Psi(s_0,\ma{s},\ma{y})\leq B$ is 
plainly equivalent to 
$\max\{|s_0s_3^2y_3^2|, \Psi(s_0,\ma{s},\ma{y})\}\leq B$. 
We have preferred not to include it explicitly in the statement of
Lemma \ref{lem:301.2} however.

\subsection{The asymptotic formula}

Our starting point is  Lemma \ref{lem:301.2}. Let 
$T(B)$ denote the quantity on the right hand side that is to be
estimated. 
Once taken together with \eqref{eq:ut-a1}, the height condition $\Psi(s_0,\ma{s},\ma{y})\leq B$
is equivalent to
$$
\max_{i=1,2}\big\{
|s_0^3s_1^2s_2^2s_3^2|, |s_0s_i^2y_i^2|, ~|y_1y_2(s_1y_1-s_2y_2)/s_3|
\big\}\leq B.
$$
Define
$$
X_0:=\Big( \frac{s_0^3s_1^2s_2^2s_3^2}{B}\Big)^{1/3},\quad
X_i:=\Big( \frac{s_1s_2s_3B}{s_i^3}\Big)^{1/3},\quad
$$
for $i=1,2$. Then the height conditions above can be rewritten as
$$
|X_0|\leq 1, \quad |f_1(y_1)|\leq 1, \quad |f_2(y_2)|\leq 1, \quad
|g(y_1,y_2)|\leq 1,
$$
where
$$
f_i(y):=X_0\Big(\frac{y}{X_i}\Big)^2,\quad 
g(y_1,y_2):=\frac{y_1y_2}{X_1X_2}\Big(\frac{y_1}{X_1}-\frac{y_2}{X_2}\Big)
$$
for $i=1,2$.
In order to count solutions to the equation \eqref{eq:ut-a1}, our plan
will be to view the equation as a congruence
$$
s_1y_1-  s_2y_2\equiv 0 \pmod{s_3},
$$
which has the effect of automatically taking care of the summation
over $y_3$. In order to make this approach viable we will need to
first extract the coprimality conditions on the $y_3$ variable.

Define the set 
\begin{equation}
  \label{eq:calS}
\mcal{S}:=\{(s_0,\ma{s})\in\N^4: \hcf(s_i,s_j)=1, ~X_0\leq 1\},
\end{equation}
with $i,j$ generic indices from the set $\{1,2,3\}$.
We now apply M\"obius inversion,
as in Exercise \ref{e:mob}, in order to remove the coprimality condition
$\hcf(y_3,s_0s_1s_2)=1$. Thus we find that
$$
T(B)=\sum_{(s_0,\ma{s})\in\mcal{S}} \sum_{k_3\mid
  s_0s_1s_2}\mu(k_3)\#\left\{\y\in\Z^3: 
\begin{array}{l}
\hcf(y_1,s_0s_2s_3)=1,\\
\hcf(y_2,s_0s_1s_3)=1,\\
y_1>0,\\
s_1y_1-  s_2y_2+  k_3s_3y_3=0,\\
|f_i(y_i)|\leq 1, ~|g(y_1,y_2)|\leq 1
\end{array}
\right\}.
$$
Now it is clear that the summand vanishes unless $\hcf(k_3,s_1s_2)=1$.
Hence 
\begin{equation}
  \label{eq:S1-a1}
T(B)=\sum_{(s_0,\ma{s})\in\mcal{S}} \sum_{\colt{k_3\mid
  s_0}{\hcf(k_3,s_1s_2)=1}}
\mu(k_3)S_{k_3}(B),
\end{equation}
where 
$$
S_{k_3}(B):=
\#\left\{y_1,y_2\in \Z:
\begin{array}{l}
\hcf(y_1,s_0s_2s_3)=1,\\
\hcf(y_2,s_0s_1s_3)=1,\\
y_1>0,\\
s_1y_1\equiv s_2y_2\bmod{k_3s_3},\\
|f_i(y_i)|\leq 1, ~|g(y_1,y_2)|\leq 1
\end{array}
\right\}.
$$
Clearly $S_{k_3}(B)$ depends on the parameters $s_0$ and $\ma{s}$,
in addition to $k_3$ and $B$. 
We now turn to the estimation of $S_{k_3}(B)$, for which we need the
following basic result.

\begin{ex}\label{ex:interval}
Let $b\geq a$ and $q>0$. Show  that
$$
\#\{n\in\Z\cap(a,b]: n\equiv n_0\bmod{q}\}=\frac{b-a}{q}+O(1).
$$
\end{ex}

We will fix $y_2$ and apply Exercise \ref{ex:interval} to handle the
summation over $y_1$. Before this we must use M\"obius inversion to
remove the coprimality condition $\hcf(y_1,s_0s_2s_3)=1$ from the
summand. Thus we find that
$$
S_{k_3}(B)=
\sum_{k_1\mid s_0s_2s_3}
\mu(k_1)
\#\left\{y_1,y_2\in \Z:
\begin{array}{l}
\hcf(y_2,s_0s_1s_3)=1,\\
k_1s_1y_1\equiv s_2y_2\bmod{k_3s_3},\\
|f_1(k_1y_1)|\leq 1, ~|f_2(y_2)|\leq 1,\\ 
|g(k_1y_1,y_2)|\leq 1, y_1>0
\end{array}
\right\}.
$$
In view of the other coprimality conditions, the summand plainly
vanishes unless $\hcf(k_1,k_3s_3)=1$. We may therefore write 
$\rho\in\Z$ for the (unique) inverse of $k_1s_1$ modulo $k_3s_3$, whence
\begin{equation}
  \label{eq:S13-a1}
S_{k_3}(B)=
\sum_{\colt{k_1\mid s_0s_2}{\hcf(k_1,k_3s_3)=1}}
\mu(k_1)
S_{k_1,k_3}(B),
\end{equation}
with
\begin{align*}
S_{k_1,k_3}(B)
&:=
\sum_{\colt{y_2\in\Z: ~|f_2(y_2)|\leq 1}{\hcf(y_2,s_0s_1s_3)=1}}
\#\left\{y_1\in \N:
\begin{array}{l}
y_1\equiv \rho s_2y_2\bmod{k_3s_3},\\
|f_1(k_1y_1)|\leq 1,\\ 
|g(k_1y_1,y_2)|\leq 1
\end{array}
\right\}.
\end{align*}
An application of Exercise \ref{ex:interval} now reveals that 
\begin{equation}
  \label{eq:k1k3}
S_{k_1,k_3}(B)
=
\sum_{\colt{y_2\in\Z: ~|f_2(y_2)|\leq 1}{\hcf(y_2,s_0s_1s_3)=1}}
\Big(
\frac{X_1F_1(X_0,y_2/X_2)}{k_1k_3s_3}+O(1)\Big),
\end{equation}
where 
$$
F_1(u,v):= \int_{\{t\in\R_{\geq 0}: ~|ut^2|,|tv(t-v)|\leq 1\}}\d t.
$$
We close this section by showing that once summed over all
$(s_0,\ma{s},y_2) \in \N^5$, the error term in \eqref{eq:k1k3} makes a
satisfactory overall contribution to the error term in Theorem \ref{t:a1}.
Using the fact that $\sum_{k\mid n}|\mu(k)|=2^{\omega(n)}$, 
we find that this contribution is 
\begin{align*}
\ll \sum_{(s_0,\ma{s})\in\mcal{S}}
\frac{4^{\omega(s_0)}
2^{\omega(s_2)}X_2}{X_0^{1/2}}
&= B^{1/2} \sum_{(s_0,\ma{s})\in\mcal{S}}
\frac{4^{\omega(s_0)}
2^{\omega(s_2)}}{s_0^{1/2}s_2}\\
&\ll B 
\sum_{\colt{s_0,s_1,s_2\in\N}{s_0^3s_1^2s_2^2\leq B}}
\frac{4^{\omega(s_0)}
2^{\omega(s_2)}}{s_0^{2}s_1s_2^2}\ll B \log B.
\end{align*}
This is satisfactory for Theorem \ref{t:a1}, and so we may henceforth
ignore the error term in the above estimate for $S_{k_1,k_3}(B)$.

Define the arithmetic function
$$
\phi^*(n):=\prod_{p\mid n}\Big(1-\frac{1}{p}\Big),
$$
where as is common convention the product is over distinct prime
divisors of $n$.
It will be useful to note that 
\begin{equation}
  \label{eq:phi*ab}
  \phi^*(mn)=\frac{\phi^*(m)\phi^*(n)}{\phi^*(\hcf(m,n))},
\end{equation}
for any $m,n\in\N$.
We must now sum over the variable $y_2$, for which we will employ the
following basic result.

\begin{ex}\label{ex:phi*}
Let $I\subset \R$ be an interval, let $a\in\N$ and let $f:\R\rightarrow
\R_{\geq 0}$ be a function that is continuously differentiable on $I$.
Use \eqref{eq:EM} to show that
$$
\sum_{\colt{n\in\Z\cap I}{\hcf(n,a)=1}}f(n) = \phi^*(a)\int_I f(t)\d t
+O\big(2^{\omega(a)}\sup_{t\in I}|f(t)|\big).
$$
\end{ex}

We may now return to \eqref{eq:k1k3}. Using Exercise \ref{ex:phi*} we
deduce that
\begin{equation}
  \label{eq:S13'-a1}
S_{k_1,k_3}(B)
=
\frac{\phi^*(s_0s_1s_3)X_1X_2F_2(X_0)}{k_1k_3s_3} +O\Big(\frac{2^{\omega(s_0s_1s_3)}X_1}{k_1k_3s_3}\Big),
\end{equation}
where 
$$
F_2(u):= \int_{\{t,v\in\R: ~t>0,~|ut^2|, |uv^2|, |tv(t-v)|\leq 1\}}\d t\d v.
$$
We must now estimate the overall contribution to $\nub$ from the error
term in this estimate, once summed up over the remaining variables.
This  gives 
\begin{align*}
\ll  \sum_{(s_0,\ma{s})\in\mcal{S}}
\frac{4^{\omega(s_0)}
2^{\omega(s_2)}2^{\omega(s_0s_1s_3)}X_1}{s_3}
&\ll B^{1/3}
\sum_{\colt{s_0,s_1,s_2,s_3\in\N}{s_0^3s_1^2s_2^2s_3^2\leq B}}
\frac{8^{\omega(s_0)}
2^{\omega(s_1s_2s_3)}s_2^{1/3}}{s_1^{2/3}s_3^{2/3}}\\
&\ll B \log B,
\end{align*}
by summing over $s_2\leq \sqrt{B/(s_0^3s_1^2s_3^2)}$.  
This is satisfactory for Theorem \ref{t:a1}, and so we may 
henceforth ignore the error term in \eqref{eq:S13'-a1}.
As pointed out to the author by R\'egis de la Bret\`eche, it is easy
to sharpen this error term to $O(B)$ using the fact that $\phi^*$ has
constant average order.

Now it is trivial to check that
$$
\sum_{\colt{d\mid n}{\hcf(d,a)=1}}\frac{\mu(d)}{d}=\frac{\phi^*(n)}{\phi^*(\hcf(a,n))},
$$
for any $a,n\in\N$. 
Bringing together \eqref{eq:S1-a1}, \eqref{eq:S13-a1}
and \eqref{eq:S13'-a1}, we conclude that
$$
T(B)=\sum_{(s_0,\ma{s})\in\mcal{S}}
\sum_{\colt{k_3\mid s_0}{\hcf(k_3,s_1s_2)=1}} 
\hspace{-0.2cm}
\frac{\mu(k_3)}{k_3} 
\frac{\phi^*(s_0s_2)\phi^*(s_0s_1s_3)}{\phi^*(\hcf(k_3s_3,s_0s_2))}
\frac{X_1X_2F_2(X_0)}{s_3},
$$
where $\mcal{S}$ is given by \eqref{eq:calS}.
It is clear that $\hcf(k_3s_3,s_0s_2)=\hcf(k_3s_3,s_0)$.
Let us define the arithmetic function
$$
\vartheta(s_0,\ma{s})=
\frac{\phi^*(s_0s_2)\phi^*(s_0s_1s_3)}{\phi^*(\hcf(s_0,s_3))} \sum_{\colt{k_3\mid s_0}{\hcf(k_3,s_1s_2)=1}} 
\frac{\mu(k_3)}{k_3} \frac{\phi^*(\hcf(k_3,s_0,s_3))}{\phi^*(\hcf(k_3,s_0))}
$$
when $\hcf(s_i,s_j)=1$ for $1\leq i<j\leq 3$, and
$\vartheta(s_0,\ma{s})=0$ otherwise. It follows from 
\eqref{eq:phi*ab} that 
\begin{align*}
\vartheta(s_0,\ma{s})
&=
\frac{\phi^*(s_0s_2)\phi^*(s_0s_1s_3)}{\phi^*(\hcf(s_0,s_3))} 
\prod_{\colt{p\mid \hcf(s_0,s_3)}{p \nmid s_1s_2}}\Big(1-\frac{1}{p}\Big)
\prod_{\colt{p\mid s_0}{p \nmid
    s_1s_2s_3}}\Big(\frac{1-\frac{2}{p}}{1-\frac{1}{p}}\Big)\\
&=
\phi^*(s_0s_2)\phi^*(s_0s_1s_3)
\prod_{\colt{p\mid s_0}{p \nmid s_1s_2s_3}}\Big(\frac{1-\frac{2}{p}}{1-\frac{1}{p}}\Big)\\
&=
\phi^*(s_0)\phi^*(s_1s_2s_3)
\prod_{\colt{p\mid s_0}{p \nmid s_1s_2s_3}}\Big(1-\frac{2}{p}\Big),
\end{align*}
when $\hcf(s_i,s_j)=1$ for $1\leq i<j\leq 3$.  On recalling the
definitions of $X_1,X_2$, we deduce that 
\begin{equation}\label{152.1}
T(B)
=B^{2/3}
\sum_{n\leq B} \D(n)F_2\big((n/B)^{1/3}\big),
\end{equation}
where
\begin{equation}
  \label{eq:DELTA}
  \D(n):=\sum_{n=s_0^3s_1^2s_2^2s_3^2}
\frac{\vartheta(s_0,\ma{s})}{(s_1s_2s_3)^{1/3}},
\end{equation}
for any $n\in \N$.

We will use Perron's formula to estimate $\sum_{n\leq B}\D(n)$, before
combining it with partial summation to estimate \eqref{152.1}. Consider the
Dirichlet series $D(s):=\sum_{n=1}^\infty \D(n)n^{-s}$. 
We have 
$$
D(s+1/3)=\sum_{s_0,s_1,s_2,s_3=1}^\infty 
\frac{\vartheta(s_0,\ma{s})}{s_0^{3s+1}s_1^{2s+1}s_2^{2s+1}s_3^{2s+1}},
$$
and it is  straightforward to check that
$D(s+1/3)=\prod_p a_p(s)$, with 
\begin{align*}
a_p(s)=
1
+\frac{3(1-1/p)}{p^{2s+1}(1-1/p^{2s+1})}
&+
\frac{(1-1/p)(1-2/p)}{p^{3s+1}(1-1/p^{3s+1})}\\
&+\frac{3(1-1/p)^2}{p^{5s+2}(1-1/p^{2s+1})(1-1/p^{3s+1})}.
\end{align*}
Hence $D(s+1/3)=E_1(s)E_2(s)$, where
$E_1(s)=\zeta(2s+1)^3\zeta(3s+1)$ and 
\begin{equation}
  \label{eq:defE2}
  E_2(s)=\frac{D(s+1/3)}{\zeta(2s+1)^3\zeta(3s+1)}=\prod_p\Big(1+O\Big(\frac{1}{p^{4\sigma +2}}\Big)\Big)
\end{equation}
on the half-plane $\Re e(s)> -1/2$. 
In particular, $E_1(s)$ has a meromorphic continuation to all of $\C$
with a pole of order $4$ at $s=0$, and $E_2(s)$ is holomorphic and
bounded on the half-plane $\Re e(s)> -1/4$.

Let $c=1/3+\ve$ for any $\ve>0$, and let $T \geq 1$.  
Then an application of Lemma~\ref{lem:perron} reveals that
$$
\sum_{n\leq B}\D(n)
= \frac{1}{2\pi i}
\int_{c-iT}^{c+iT}E_1(s-1/3)E_2(s-1/3)\frac{B^s}{s}\d s +
O_\ve\Big(\frac{B^{1/3+\ve}}{T}\Big),
$$
provided that $B\not\in \Z$.
We apply Cauchy's residue theorem to the rectangular contour $\mcal{C}$ 
joining the points ${1/6-iT}$, ${1/6+iT}$,
${c+iT}$ and ${c-iT}$.
Now the residue of $E_1(s-1/3)E_2(s-1/3)B^s/s$ at $s=1/3$ is clearly 
$$
\rom{Res}_{s=1/3}\Big\{E_1(s-1/3)E_2(s-1/3)\frac{B^s}{s}\Big\} =
\frac{E_2(0)}{48}B^{1/3}P(\log B),
$$
for some monic polynomial $P\in \R[x]$ of degree $3$. 
Define the difference
$$
\mcal{E}(B)=\sum_{n\leq B}\D(n) -
\frac{E_2(0)}{48}B^{1/3}P(\log B).
$$
Then it follows that
$$
\mcal{E}(B) 
\ll_\ve
\frac{B^{1/3+\ve}}{T}+
\Big(\int_{1/6-iT}^{1/6+iT}+\int_{1/6-iT}^{c-iT}+ \int_{c+iT}^{1/6+iT}\Big)
\Big|E_1(s-1/3)\frac{B^s}{s}\Big|\d s,
$$
since $E_2(s-1/3)$ is holomorphic and bounded on the half-plane $\Re
e(s)\geq 1/6$.

We proceed to estimate the contribution from the horizontal
contours. Recall the well-known convexity bounds
$$
\zeta(\sigma+i t)\ll_\ve 
\left\{
\begin{array}{ll}
|t|^{(1-\sigma)/3+\varepsilon}, & \mbox{if $\sigma\in[1/2,1]$,}\\
|t|^{(3-4\sigma)/6+\varepsilon}, & \mbox{if $\sigma\in[0,1/2]$,}
\end{array}
\right.
$$
for any $|t|\geq 1$.  
A proof of these can be found in \cite[\S II.3.4]{ten}, for example.
It therefore follows that
\begin{equation}\lab{majE1}
E_1(\sigma-1/3+it)\ll_\varepsilon
|t|^{1-3\sigma+\varepsilon}
\end{equation} 
for any $\sigma\in[1/6,1/3)$ and any $|t|\geq 1$. 
We may now deduce that
\begin{equation}\begin{split}
\int_{1/6-iT}^{c-iT}\Big|E_1(s-1/3)\frac{B^s}{s}\Big|\d s &\ll_\ve 
\int_{1/6}^{c}B^\sigma T^{-3\sigma+\ve}\d \sigma\\ 
&\ll_\ve \frac{B^{1/3+\ve}T^\ve}{T} + \frac{B^{1/6}T^\ve}{T^{1/2}}.
\lab{t2}
\end{split}
\end{equation}
One obtains the same estimate for the contribution from the remaining
horizontal contour. Turning to the vertical contour, \eqref{majE1} gives
\begin{align*}
\int_{1/6-iT}^{1/6+iT}\Big|E_1(s-1/3)\frac{B^s}{s}\Big|\d
s 
&\ll  
B^{1/6}\int_{-T}^{T} \frac{|E_1(-1/6+it)|}{1+|t|}  \d t \\
&\ll  
B^{1/6}\int_{-T}^{T} \frac{|t|^{1/2+\ve}}{1+|t|}  \d t\\ 
&\ll B^{1/6}T^{1/2+\ve}.
\end{align*}
Once combined with \eqref{t2}, we conclude that
$$
\mcal{E}(B)\ll_\ve 
B^{1/3+\ve}T^{-1+\ve} + B^{1/6}T^{1/2+\ve},
$$
for any $T \geq 1$. 
Taking $T=B^{1/9}$
we obtain
$$
\sum_{n\leq B}\D(n) =
\frac{E_2(0)}{48}B^{1/3}P(\log B) + O_\ve(B^{2/9+\ve}),
$$
for any $\ve>0$.

We are now ready to complete the proof of Theorem \ref{t:a1}. For this
it suffices to combine the latter estimate with partial summation in
\eqref{152.1}, and then apply Lemma \ref{lem:301.2}.  In this way we
deduce that
\begin{align*}
N_U(B)&=2T(B)+O(B\log B)\\
&=\frac{\sigma_{\infty}E_2(0)}{144}BQ(\log B) + O_\ve(B^{8/9+\ve})+O(B\log B),
\end{align*}
for a further cubic monic polynomial $Q\in \R[x]$. Here
$\sigma_{\infty}=6\int_0^1 F_2(u)\d u$ is given by \eqref{eq:a1-siginf}, and 
it follows from \eqref{eq:defE2} that
$$
E_2(0)=\prod_p\Big(1-\frac{1}{p}\Big)^{4}\Big(1+\frac{4}{p}+\frac{1}{p^2}\Big).
$$
This therefore completes the proof of Theorem \ref{t:a1}.

\section{The $\Dfour$ del Pezzo surface of degree $3$}\label{s:d4_cubic}

In this section we consider Manin's conjecture for the cubic surface
$$
S_2=\{[x_1,x_2,x_3,x_4]\in \bfP^3: 
x_1x_2(x_1+x_2)+x_4(x_1+x_2+x_3)^2=0\},
$$
considered in \eqref{eq:S2}
Let $U_2\subset S_2$ be the open subset formed by deleting
the lines \eqref{eq:d4_lines} from $S_2$. Our task is to estimate 
$N_{U_2}(B)$. 
In doing so it will clearly suffice to establish the estimate for any
surface that is obtained from $S_2$ via a unimodular transformation.
In view of this we will make the change of variables
$$
t_1=x_1, \quad t_2=x_2, \quad t_3=x_1+x_2+x_3,\quad t_4=-x_4,
$$
which brings $S_2$ into the shape
\begin{equation}
  \label{eq:d4}
t_1t_2(t_1+t_2)=t_3^2t_4,
\end{equation}
and which we henceforth denote by $S$.
The $6$ lines on the surface \eqref{eq:d4} take the shape
$$
t_i=t_j=0,\quad t_j=t_1+t_2=0,
$$
where $i$ denotes a generic element of the set $\{1,2\}$, and $j$ 
an element of $\{3,4\}$. 
If $U\subset S$ denotes the open subset formed by deleting these lines from the
surface, then we have $t_3t_4=0$ for any
$[\ma{t}]\not\in U$. It now follows that 
$$
\nub=\frac{1}{2}\#\{\ma{t}\in Z^4: ~\mbox{\eqref{eq:d4}
  holds}, ~|\ma{t}|\leq B, ~t_3t_4\neq 0\}.
$$
As in the argument of Lemma \ref{lem:a1.step1}, 
the factor $\frac{1}{2}$ reflects the fact that $\ma{t}$ and $-\ma{t}$
represent the same point in $\bfP^3$.   
There is a clear symmetry between solutions such that $t_3$ is
positive and negative. Similarly, \eqref{eq:d4} is
invariant under the transformation
$
t_1=-z_1, t_2=-z_2, t_3=z_3$ and $t_4=-z_4.
$
Thus we have
\begin{equation}
  \label{eq:2111.1}
\nub=2\#\{\ma{t}\in Z^4: ~\mbox{\eqref{eq:d4}
  holds}, ~|\ma{t}|\leq B, ~t_3,t_4\geq 1\},
\end{equation}
for any $B\geq 1$.

In the following section we will explicate the relation between
\eqref{eq:2111.1} and counting integral points on the corresponding
universal torsor.  When it comes to the latter task, we will be led to
consider the counting problem for rational points on plane curves of degree
$1$ and $2$. The estimates that we require will need to
be completely uniform in the coefficients of the equations defining the curves.

Given any plane curve $C\subset \bfP^2$ of degree $d\geq 1$, 
that is defined over $\Q$, let
$$
N_C(B):=\#\{x\in C(\Q): H(x)\leq B\}.
$$
As usual we write $Z^n$ for the set of primitive vectors
in $\Z^n$, and $Z_*^n$ for the set of vectors in $Z^n$ with no
components equal to zero. 
We will restrict our attention to curves that are defined by diagonal
ternary forms. We clearly have  
$N_{C}(B)=\frac{1}{2}M_d(\ma{a};B,B,B)$ for certain
non-zero integers $a_1,a_2,a_3$, where
\begin{equation}
  \label{eq:0312.2}
M_d(\ma{a};\ma{B}):=\#\{\x\in Z^3: a_1x_1^d+a_2x_2^d+a_3x_3^d,
~|x_i|\leq B_i\},
\end{equation}
and $\ma{B}=(B_1,B_2,B_3)$.
Let us begin with the situation for projective lines. 
The following result is  due to Heath-Brown  \cite[Lemma $3$]{h-b84}.

\begin{lem}\lab{p:line}
Let $\ma{a} \in Z_*^3$ and let
$B_i >0$.  Then we have
$$
M_1(\ma{a};\ma{B}) \ll 1+\frac{B_1B_2B_3}{\max |a_i|B_i}.
$$
\end{lem}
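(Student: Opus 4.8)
The plan is to reduce $M_1(\ma{a};\ma{B})$ to a count of lattice points in a planar box, and then invoke the standard geometry of numbers, using the primitivity built into the definition of $Z^3$ to deal with a degenerate case.

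Since both $M_1(\ma{a};\ma{B})$ and the asserted bound are unchanged under simultaneously permuting the coordinates of $\ma{a}$ and of $\ma{B}$, I may assume $|a_3|B_3=\max_i|a_i|B_i$, so that the target inequality reads $M_1(\ma{a};\ma{B})\ll 1+B_1B_2/|a_3|$; note that $a_3\neq 0$ and $\hcf(a_1,a_2,a_3)=1$ because $\ma{a}\in Z_*^3$ is primitive. For any $\x$ counted by $M_1(\ma{a};\ma{B})$ the pair $(x_1,x_2)$ lies in the box $R=[-B_1,B_1]\times[-B_2,B_2]$ and in the lattice $L=\{(y_1,y_2)\in\Z^2:\congr{a_1y_1+a_2y_2}{0}{|a_3|}\}$, and $\x$ is recovered from $(x_1,x_2)$ via $x_3=-(a_1x_1+a_2x_2)/a_3$; hence $M_1(\ma{a};\ma{B})\leq\#(L\cap R)$. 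Moreover the reduction homomorphism $\Z^2\to\Z/|a_3|\Z$, $(y_1,y_2)\mapsto a_1y_1+a_2y_2$, is surjective since $\hcf(a_1,a_2,|a_3|)=1$, so $L$ has index $|a_3|$ in $\Z^2$, i.e.\ $\det L=|a_3|$, while $\operatorname{area}(R)=4B_1B_2$.

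I would then split into two cases. If $R$ contains two linearly independent vectors of $L$, then both successive minima $\mu_1\leq\mu_2$ of $L$ with respect to $R$ satisfy $\mu_i\leq 1$, and combining the lower bound $\mu_1\mu_2\operatorname{area}(R)\geq 2\det L$ from Minkowski's second theorem with the standard estimate $\#(L\cap R)\ll\mu_1^{-1}\mu_2^{-1}$ (valid because $\mu_1,\mu_2\leq 1$) yields $\#(L\cap R)\ll\operatorname{area}(R)/\det L=4B_1B_2/|a_3|$, which is the desired bound. If instead $R$ contains no two linearly independent vectors of $L$, then either $L\cap R=\{\ma{0}\}$, so $M_1(\ma{a};\ma{B})=0$, or there is a primitive vector $\ma{w}=(w_1,w_2)\in L$ with $L\cap R\subseteq\Z\ma{w}$; in the latter case every $\x$ counted by $M_1(\ma{a};\ma{B})$ is an integer multiple of the single vector $\widetilde{\ma{w}}=(w_1,w_2,-(a_1w_1+a_2w_2)/a_3)\in\Z^3$, and among the nonzero integer multiples of $\widetilde{\ma{w}}$ only $\pm\widetilde{\ma{w}}/\hcf(w_1,w_2,-(a_1w_1+a_2w_2)/a_3)$ can be primitive, so $M_1(\ma{a};\ma{B})\leq 2\ll 1$. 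The two cases together give the lemma.

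The argument is essentially routine, but two points deserve attention. The first is the degenerate case: the number of not-necessarily-primitive solutions, i.e.\ $\#(L\cap R)$, can there be far larger than $1+B_1B_2/|a_3|$, so it is genuinely the primitivity constraint that collapses this contribution to $O(1)$. The second is that Minkowski's second theorem must be invoked in the direction that bounds $\#(L\cap R)$ from above by a multiple of $\operatorname{area}(R)/\det L$, not the opposite one.
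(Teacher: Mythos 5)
The paper itself does not prove Lemma \ref{p:line}; it simply quotes it from Heath-Brown \cite[Lemma 3]{h-b84}. Your argument is correct in substance and is the standard geometry-of-numbers proof one finds there: choose the coordinate with $|a_i|B_i$ maximal, project onto the remaining two, observe that the projections lie in a sublattice $L\subset\Z^2$ of index exactly $|a_3|$ (using primitivity of $\ma{a}$), and bound $\#(L\cap R)$ by combining Minkowski's second theorem with the standard estimate $\#(L\cap R)\ll\prod_i(1+\mu_i^{-1})$, handling the case $\mu_2>1$ separately by exploiting primitivity of $\x$ to reduce a potentially long rank-one sublattice segment to $O(1)$ points. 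You are right that the primitivity of $\x$ is exactly what saves the degenerate case and that the direction of Minkowski's inequality matters. One small wording slip in the degenerate case: if $d=\hcf(\widetilde{\ma{w}})>1$, the vectors $\pm\widetilde{\ma{w}}/d$ are \emph{not} integer multiples of $\widetilde{\ma{w}}$, so in that situation no $k\widetilde{\ma{w}}$ with $k\in\Z\setminus\{0\}$ is primitive and $M_1=0$, while if $d=1$ there are at most the two points $\pm\widetilde{\ma{w}}$; either way $M_1\leq 2$, so the conclusion stands, but the sentence as written conflates the two cases.
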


Lemma \ref{p:line} shows  that there are only
$O(1)$ rational points on lines of sufficiently large height.
If one has a line $L\subset \bfP^2$ given by the equation
$\ma{a}.\x=0$, for $\ma{a}\in Z_*^3$, then the height of $L$ is
simply defined to be $H(L):=|\ma{a}|$. 
When $L$ is an arbitrary line in $\bfP^n$, which is defined over $\Q$,
there is still a very natural way of defining its height. 
The height of $L$ is just the height of the rational point in the
Grassmannian $\mathbb{G}(1,n)$ that corresponds to the line. We will
not need this fact in our work.  
Let $L\subset \bfP^2$ be an arbitrary line defined over $\Q$. Then it
follows from Lemma \ref{p:line} that  
$$
N_L(B)\ll 1 +\frac{B^2}{H(L)}\ll B^2.
$$
This is essentially best possible, as can be seen by taking $n=2$ in
Exercise~\ref{ex:proj}.

Turning to curves of higher degree, we have the following result,
which is a special case of a result due to the author and Heath-Brown
\cite[Corollary 2]{bhb} 

\begin{lem} \lab{p:conic}
Let $\ma{a} \in Z_*^3$ such that $\hcf(a_i,a_j)=1$, and let $B_i>0$.
Then we have  
$$
M_2(\ma{a};\ma{B})
\ll 
\Big(1+\frac{B_1B_2B_3}{|a_1a_2a_3|}\Big)^{1/3} \tau(a_1a_2a_3),
$$
where $\tau(n):=\sum_{d\mid n}1$ denotes the usual divisor function.
\end{lem}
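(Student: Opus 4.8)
The conclusion is precisely a special case of \cite[Corollary 2]{bhb}, so for the purposes of these notes the ``proof'' is a citation; let me nonetheless describe how the underlying estimate is obtained. The quantity to bound, $M_2(\ma{a};\ma{B})$, counts primitive $\x\in\Z^3$ on the conic $C:\ a_1x_1^2+a_2x_2^2+a_3x_3^2=0$ inside the box $|x_i|\leq B_i$. If $C(\Q)=\emptyset$ then $M_2(\ma{a};\ma{B})=0$ and there is nothing to prove; and since $\ma{a}\in Z_*^3$, the conic $C$ is smooth, hence geometrically irreducible, and whenever it has a rational point $C\cong\bfP^1$ over $\Q$. (If the $a_i$ all had the same sign then $C(\R)=\emptyset$ and again $M_2=0$, so one may assume mixed signs.) The overall strategy is the determinant method of Heath-Brown and Salberger, in the shape adapted to a box with unequal sides and with explicit dependence on the coefficients of the defining form; for a conic this can be implemented concretely through a parametrisation, as follows.

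First I would pass to an integral parametrisation $x_i=q_i(s,t)$ $(i=1,2,3)$ of $C$ by binary quadratic forms $q_i\in\Z[s,t]$ with no common polynomial factor, chosen after a reduction step so that their sizes $H_i:=\|q_i\|$ (the largest modulus of a coefficient) are \emph{balanced relative to $\ma{a}$}, namely $H_1H_2H_3\asymp|a_1a_2a_3|$; this is the analytic incarnation of a Holzer-type bound (the conic, if soluble, has a rational point whose coordinates $\x^{(0)}$ satisfy $\prod_i|x_i^{(0)}|\ll|a_1a_2a_3|$, and the $q_i$ are built from $\x^{(0)}$ by projection and then made primitive). Every primitive $\x$ counted by $M_2(\ma{a};\ma{B})$ then arises, uniquely up to sign, from a primitive $(s,t)\in\Z^2$ with $q_i(s,t)=k\,x_i$, where $k=\hcf(q_1(s,t),q_2(s,t),q_3(s,t))$ divides a fixed integer $\Delta$ supported on the primes dividing $2a_1a_2a_3$, with $\tau(\Delta)\ll\tau(a_1a_2a_3)$ (the pairwise coprimality of the $a_i$ controlling the multiplicities). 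For each admissible $k$ the conditions $|q_i(s,t)|\leq kB_i$ confine $(s,t)$ to a region whose measure, by a geometry-of-numbers computation using $H_1H_2H_3\asymp|a_1a_2a_3|$ — the arithmetic--geometric mean inequality applied to the three symmetric size conditions is what produces the cube root — is $\ll k\,(B_1B_2B_3/|a_1a_2a_3|)^{1/3}$. Counting primitive lattice points in this region and summing over $k\mid\Delta$ yields
$$
M_2(\ma{a};\ma{B})\ll\sum_{k\mid\Delta}\Big(1+\Big(\frac{B_1B_2B_3}{|a_1a_2a_3|}\Big)^{1/3}\Big)\ll\Big(1+\frac{B_1B_2B_3}{|a_1a_2a_3|}\Big)^{1/3}\tau(a_1a_2a_3),
$$
using the elementary inequality $1+x^{1/3}\ll(1+x)^{1/3}$.

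The main obstacle is the reduction step: producing the parametrising forms $q_i$ with $H_1H_2H_3\asymp|a_1a_2a_3|$ uniformly in $\ma{a}$, and at the same time pinning down the divisor structure of the possible contents $k$ so that only $\tau(a_1a_2a_3)$, rather than a fixed power of it, survives. Non-squarefree coefficients $a_i$ have to be accommodated here without sacrificing sharpness (a naive reduction to the squarefree case spoils the exponent), which is exactly the delicate point worked out in \cite{bhb}. A secondary point is the regime $B_1B_2B_3\ll|a_1a_2a_3|$, where the asserted bound collapses to $\tau(a_1a_2a_3)$: there the box conditions couple the three coordinates so tightly that only $O(\tau(a_1a_2a_3))$ solutions can occur, and this should be verified directly. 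For the present account it is enough to invoke \cite[Corollary 2]{bhb}.
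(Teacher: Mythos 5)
The paper offers no proof of Lemma \ref{p:conic}: it is declared a special case of \cite[Corollary 2]{bhb} and cited, which is exactly what your proposal does, so at the level of what these notes themselves contain the two coincide. The supplementary sketch you attach is broadly the right flavour (Holzer-type reduction, parametrisation of a soluble conic by balanced binary quadratic forms, geometry of numbers), but there is a slip worth flagging: you compute the measure of the admissible $(s,t)$-region as $\ll k\,(B_1B_2B_3/|a_1a_2a_3|)^{1/3}$, yet in the display that follows the factor of $k$ has silently disappeared. Carried through honestly, summing over $k\mid\Delta$ would give a term $\bigl(\sum_{k\mid\Delta}k\bigr)\,(B_1B_2B_3/|a_1a_2a_3|)^{1/3}$, and $\sum_{k\mid\Delta}k$ is not $O(\tau(a_1a_2a_3))$; removing this loss is precisely one of the delicate points handled in \cite{bhb}, so the sketch on its own does not establish the stated bound. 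As a citation, however, your answer is correct and agrees with the paper.
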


In keeping with our discussion of lines, let us consider to what extent
our estimate reflects the true growth rate of $N_C(B)$, for a
quadratic curve $C\subset \bfP^2$ that is defined
by a diagonal equation with pairwise coprime coefficients. 
Recall the estimate $\tau(n)=O_{\ve}(n^\ve)$, that holds  for any $\ve>0$. Writing $\|C\|$
for the maximum modulus of the coefficients defining $C$, we deduce
from Lemma  \ref{p:conic} that
$$
N_C(B)\ll_{\ve} \|C\|^\ve B.
$$
This should be compared with the work of Heath-Brown \cite[Theorem
3]{annal} that shows 
$N_C(B)\ll_{d,\ve} B^{2/d+\ve}$,
for any irreducible plane curve $C\subset \bfP^2$ of degree $d$.  

Both Lemma \ref{p:line} and Lemma \ref{p:conic} are established using
the geometry of numbers.

\subsection{Elementary considerations}

We proceed to show how $\nub$ can be related to a count of the integer
points on the corresponding universal torsor. Our argument is in
complete analogy to that presented in \S \ref{s.301.3}, although the
individual steps differ somewhat. 
If $\tS$ denotes the minimal desingularisation of the surface $S$,
then Derenthal \cite{der1} has calculated the universal torsor over 
$\tS$, it being embedded in $\A^{10}$ by a single equation
\begin{equation}
  \label{eq:ut-d4}
  s_1u_1y_1^2+  s_2u_2y_2^2+  s_3u_3y_3^2=0.
\end{equation}
Note that one of the variables does not
appear explicitly in the equation.
We will need the following basic fact.

\begin{ex}\label{ex:2111.2}
Let $a,b\in\N$. Show that $a\mid b^2$ if and only if $a=uv^2$ for
$u,v\in\N$ such that $u$ is square-free and $uv\mid b$.
\end{ex}

Given $v\in \R$ and $\ma{s},\ma{u},\ma{y}\in\R^3$, define 
\begin{equation}
  \label{eq:height-d4}
\Psi(v,\ma{s},\ma{u},\ma{y}):=\max\Big\{
\begin{array}{l}
|s_1s_2s_3|, ~|u_1^2u_2^2u_3^2v^3y_1y_2y_3|\\
|s_1u_1^2u_2u_3v^2y_1^2|, ~|s_2u_1u_2^2u_3v^2y_2^2|
\end{array}
\Big\}.
\end{equation}
We are now ready to record our translation of the problem to the
universal torsor.

\begin{lem}\label{lem:2111.5}
We have
$$
\nub=2\#\left\{(v,\ma{s},\ma{u},\ma{y})\in \N^4\times \Z^3\times \N^3: 
\begin{array}{l}
u_3>0, ~\Psi(v,\ma{s},\ma{u},\ma{y})\leq B\\
\mbox{\eqref{eq:ut-d4}  holds},\\ 
|\mu(u_1u_2u_3)|=1,\\
\hcf(s_1s_2s_3,u_1u_2u_3v)=1,\\
\hcf(y_i,y_j)=1,\\
\hcf(y_i,s_j,s_k)=1,
\end{array}
\right\}.
$$
where $i,j,k$ denote distinct elements from the set $\{1,2,3\}$.
\end{lem}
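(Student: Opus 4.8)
The plan is to argue in close analogy with the proof of Lemma~\ref{lem:301.2}, setting up an explicit bijection between the primitive vectors $\ma{t}$ counted on the right of \eqref{eq:2111.1} and the ten-tuples $(v,\ma{s},\ma{u},\ma{y})$ described in the statement, under which $|\ma{t}|=\Psi(v,\ma{s},\ma{u},\ma{y})$; the factor $2$ then passes through unchanged from \eqref{eq:2111.1}. So fix a primitive $\ma{t}\in\Z^4$ with $t_3,t_4\geq1$ and $t_1t_2(t_1+t_2)=t_3^2t_4$, in particular with $t_1t_2(t_1+t_2)>0$. I would first record that the three integers $\ell_1=-t_1$, $\ell_2=-t_2$, $\ell_3=t_1+t_2$ satisfy $\ell_1+\ell_2+\ell_3=0$ and $\ell_1\ell_2\ell_3=t_3^2t_4$, that their pairwise greatest common divisors all equal $g:=\hcf(t_1,t_2)$, and that primitivity of $\ma{t}$ forces $\hcf(g,t_3,t_4)=1$. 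Writing $\ell_i=gn_i$ with the $n_i$ pairwise coprime and $n_1+n_2+n_3=0$, this leaves the single relation $g^3n_1n_2n_3=t_3^2t_4$ to be unravelled.

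The heart of the argument is the descent. Since each prime divides at most one of $n_1,n_2,n_3$, the $n_i$-part of the identity $g^3n_1n_2n_3=t_3^2t_4$ splits off one variable at a time, and Exercise~\ref{ex:2111.2} is exactly the tool needed to cope with the quadratic factor $t_3^2$: at each stage it lets me peel off from $\ell_i$ (and from $g$) the largest square that is forced into $t_3^2$, leaving squarefree remainders. Guided by \eqref{eq:height-d4}, I would introduce $s_i$ to carry the part of $\ell_i$ meeting $t_4$, a squarefree integer $u_i$ to carry the ``odd leftover'' (fixing the sign of $u_3$ to pin down an overall sign ambiguity), a positive integer $y_i$ to carry the part of $\ell_i$ meeting $t_3$, and a positive integer $v$ to package the portion of $g$ contributing a square to $t_3$. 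Tracking all the divisibilities then yields
$$t_1=-s_1u_1^2u_2u_3v^2y_1^2,\qquad t_3=u_1^2u_2^2u_3^2v^3y_1y_2y_3,\qquad t_4=s_1s_2s_3,$$
with $t_2$ obtained from $t_1$ by interchanging the indices $1$ and $2$, and $t_1+t_2=s_3u_1u_2u_3^2v^2y_3^2$; the coprimality conditions that emerge are precisely $|\mu(u_1u_2u_3)|=1$, $\hcf(s_1s_2s_3,u_1u_2u_3v)=1$, $\hcf(y_i,y_j)=1$ and $\hcf(y_i,s_j,s_k)=1$. The key point is that dividing $\ell_1+\ell_2+\ell_3=0$ through by the common monomial factor $u_1u_2u_3v^2$ gives exactly $s_1u_1y_1^2+s_2u_2y_2^2+s_3u_3y_3^2=0$, which is the universal torsor equation \eqref{eq:ut-d4}.

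For the converse I would start from any $(v,\ma{s},\ma{u},\ma{y})$ as in the statement, define $\ma{t}$ by the formulas above, and check that $t_3,t_4\geq1$, that $t_1t_2(t_1+t_2)=s_1s_2s_3(u_1u_2u_3)^4v^6(y_1y_2y_3)^2=t_3^2t_4$ (the sign coming out correctly because both $t_1$ and $t_2$ are negated), that $\hcf(t_1,t_2,t_3,t_4)=1$ follows from the stated coprimalities, and that distinct tuples give distinct $\ma{t}$. Since $t_1,t_2,t_3,t_4$ are, up to sign, exactly the four monomials in \eqref{eq:height-d4}, the constraint $|\ma{t}|\leq B$ is literally $\Psi(v,\ma{s},\ma{u},\ma{y})\leq B$, so feeding this bijection into \eqref{eq:2111.1} gives the lemma. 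I expect the main obstacle to be the middle step: with $t_3$ appearing quadratically one must repeatedly separate square, squarefree and ``mixed'' parts, and because the three branches $t_1$, $t_2$, $t_1+t_2$ all carry the common factor $g$, some care is needed not to attribute a prime of $g$ to two different variables. Verifying that the coprimality conditions produced by the descent are \emph{equivalent} to — rather than merely implied by — those in the statement, and checking the converse direction, is where the real labour lies; everything else runs in parallel with Lemma~\ref{lem:301.2}.
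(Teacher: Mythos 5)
Your target parametrisation, the final formulas for $t_1,\dots,t_4$, and the coprimality conditions all match the paper's proof exactly, and the set-up via \eqref{eq:2111.1} is correct, so the overall shape of your argument is right. The difference lies in the order of the descent, and it is not cosmetic. You extract the symmetric common factor $g=\hcf(t_1,t_2)$ from the three branches $\ell_1,\ell_2,\ell_3$ at the outset; the paper instead first peels off the parts of $t_1,t_2$ that divide $t_4$, setting $\eta_{14}=\hcf(t_1,t_4)$ and $\eta_{24}=\hcf(t_2,t_4/\eta_{14})$, and only then takes the residual common factor $\eta_{12}=\hcf(t_1/\eta_{14},t_2/\eta_{24})$. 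That ordering ensures $\eta_{12}$ is coprime to $t_4$ (one checks $\hcf(\eta_{12},z_4)=1$, and $\hcf(\eta_{12},\eta_{14})=1$ follows from primitivity of $\ma{t}$), which is exactly what justifies $\eta_{12}^3\mid t_3^2$ and lets Exercise~\ref{ex:2111.2} bite, giving $\eta_{12}=uv^2$ and $t_3=u^2v^3z_3$. Your $g$, by contrast, is generically \emph{not} coprime to $t_4$: the relation $\hcf(g,t_3,t_4)=1$ only says each prime of $g$ lands in exactly one of $t_3$ or $t_4$, and the latter possibility genuinely occurs. For $p$ an odd prime, $\ma{t}=(2p,2p,4,p^3)$ is a primitive solution of \eqref{eq:d4} with $t_3,t_4\geq1$, lying off the lines, and here $g=2p$ while $\hcf(g,t_4)=p$. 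So one cannot conclude $g^3\mid t_3^2$, and the claim that Exercise~\ref{ex:2111.2} "lets me peel off from $\ell_i$ (and from $g$) the largest square that is forced into $t_3^2$" skips a necessary preliminary step.

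Your own caveat, that "some care is needed not to attribute a prime of $g$ to two different variables," is a real concern but not this one. The missing idea is the split $g=g_3g_4$ using $\hcf(g,t_3,t_4)=1$, with $g_4$ pushed into the $s_i$ and $g_3$ fed to Exercise~\ref{ex:2111.2}; the paper avoids this extra step by simply reversing the order of extractions. Everything downstream of this point in your sketch, including the splitting $u=u_1u_2u_3$ and $z_3=w_1w_2w_3$ driven by $z_1z_2\mid uz_3^2$, the sign normalisation via $u_3>0$, the equivalence of the resulting coprimalities with those in the statement once \eqref{eq:ut-d4} is invoked, and the converse direction, runs exactly as in the paper.
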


\begin{proof}
Let $\ma{t}\in \Z^4$ be a vector such that
\eqref{eq:d4} holds, with $t_3,t_4\geq 1$. Write 
$$
\eta_{14}=\hcf(t_1,t_4), \quad
\eta_{24}=\hcf(t_2,t_4/\eta_{14}), \quad
\eta_{12}=\hcf(t_1/\eta_{14},t_2/\eta_{24}).
$$
Then $\eta_{12},\eta_{14}, \eta_{24}\in\N$ and
there exists $z_4\in\N$ and $z_1,z_2\in\Z$ such that
$$
t_1=\eta_{12}\eta_{14}z_1, \quad
t_2=\eta_{12}\eta_{24}z_2,\quad
t_4=\eta_{14}\eta_{24}z_4.
$$
Moreover, it is not hard to deduce that
$$
\hcf(\eta_{12}z_1,\eta_{24}z_4)=\hcf(\eta_{12}z_2,z_4)=\hcf(z_1,z_2)=1,
$$
and
$$
\hcf(t_3,\eta_{14},\eta_{12}\eta_{24}z_2)=1.
$$
Under this substitution the equation \eqref{eq:d4} becomes
$$
\eta_{12}^3z_1z_2(\eta_{14}z_1+\eta_{24}z_2)=t_3^2z_4.
$$
It follows that $\eta_{12}^3 \mid t_3^2$
in any given integer solution.  Exercise \ref{ex:2111.2} therefore implies that there exist
$u,v,z_3\in\N$ such that $|\mu(u)|=1$ and 
$$
\eta_{12}=uv^2,\quad t_3=u^2v^3z_3,
$$
with 
$$
z_1z_2(\eta_{14}z_1+\eta_{24}z_2)=uz_3^2z_4.
$$
We proceed to consider the effect of the divisibility condition 
$z_1z_2\mid uz_3^2$ that this equation entails.

Recall that 
$
\hcf(z_1,z_2)=\hcf(z_1,z_4)=\hcf(z_2,z_4)=1.
$
Since $z_1z_2\mid uz_3^2$,  there must exist
$u_1,u_2,u_3,w_1,w_2,w_3\in\Z$ such that $w_1,w_2,w_3,u_3>0$ and
$$
u=u_1u_2u_3,\quad z_1=u_1w_1^2, \quad z_2=u_2w_2^2, \quad z_3=w_1w_2w_3.
$$
Here we have used the fact that if $p$ is a prime such that $p\nmid u$ and $p\mid
z_1z_2$, then $p$ must divide $z_1$ or $z_2$ to even order.
Under these substitutions our equation becomes 
$$
\eta_{14}u_1w_1^2+\eta_{24}u_2w_2^2=u_3w_3^2z_4.
$$
Moreover, we will have the corresponding coprimality conditions
\begin{equation}
  \label{eq:2111.3}
\hcf(u_1u_2u_3 v w_1,\eta_{24}z_4)=\hcf(u_1u_2u_3v w_2,z_4)=\hcf(u_1w_1,u_2w_2)=1,
\end{equation}
and
\begin{equation}
  \label{eq:2111.4}
|\mu(u_1u_2u_3)|=1,\quad \hcf(u_1^2u_2^2u_3^2v^3w_1w_2w_3,\eta_{14},\eta_{24}u_1u_2^2u_3v^2w_2^2)=1.
\end{equation}

We now set $\ma{s}=(\eta_{14},\eta_{24},z_4)$ and $\ma{y}=\ma{w}$, and
replace $(u_1,u_2,u_3)$ by $(-u_1,-u_2,u_3)$. Tracing through our
argument, one sees that we have made the transformation 
$$
\left\{
\begin{array}{l}
t_1=-s_1u_1^2u_2u_3v^2y_1^2,\\
t_2=-s_2u_1u_2^2u_3v^2y_2^2,\\
t_3=u_1^2u_2^2u_3^2v^3y_1y_2y_3,\\
t_4=s_1s_2s_3.
\end{array}
\right.
$$
In particular, it is clear that the height condition $|\x|\leq B$ is
equivalent to $\Psi(v,\ma{s},\ma{u},\ma{y})\leq B$, in the notation of \eqref{eq:height-d4}.
We now observe that under this transformation the equation
\eqref{eq:d4} becomes \eqref{eq:ut-d4}, and the coprimality relations
\eqref{eq:2111.3} and \eqref{eq:2111.4} can be rewritten
$$
\hcf(s_2s_3,u_1u_2u_3v y_1)=\hcf(s_3,u_1u_2u_3v y_2)=\hcf(u_1y_1,u_2y_2)=1,
$$
and
$$
|\mu(u_1u_2u_3)|=1,\quad
\hcf(s_1,u_1u_2u_3v y_2\hcf(y_3,s_2))=1.
$$
We can combine these relations with \eqref{eq:ut-d4} to simplify them
still further. In fact, once combined with \eqref{eq:ut-d4}, we claim that they are equivalent to the conditions
$$
|\mu(u_1u_2u_3)|=1,\quad \hcf(s_1s_2s_3,u_1u_2u_3v)=
\hcf(y_i,y_j)=\hcf(y_i,s_j,s_k)=1,
$$
appearing in the statement of the lemma. 
To establish the forward implication, it suffices to show that
$\hcf(y_1,y_3)=\hcf(y_2,y_3)=1$, the remaining conditions being
immediate.  But these two conditions follow on combining
\eqref{eq:ut-d4} with the fact that $\hcf(y_1,s_2u_2y_2)=1$.
To see the reverse implication, the conditions are all immediate apart
from 
$$
\hcf(y_1,s_2s_3)=\hcf(y_2,s_1s_3)=\hcf(u_1,y_2)=\hcf(u_2,y_1)=1.
$$
But each of these is an easy consequence of the assumed
coprimality relations, and \eqref{eq:ut-d4}. Finally, we leave it as an
exercise to the reader to check that each $(v,\ma{s},\ma{u},\ma{y})$
counted in the right hand side of Lemma \ref{lem:2111.5} produces a
primitive solution of \eqref{eq:d4} with $t_3,t_4\geq 1$.
This completes the proof of Lemma \ref{lem:2111.5}.
\end{proof}

In what follows let us write $i$ for a generic element of the set $\{1,2,3\}$.
Fix a choice of $v\in\N$ and $S_i, U_i, Y_i >0$, and write 
\begin{equation}
  \label{eq:N_dyadic}
\mcal{N}=\mcal{N}_v(\ma{S};\ma{U};\ma{Y})
\end{equation}
for the total contribution to $\nub$ in Lemma \ref{lem:2111.5} from $\ma{s},\ma{u},\ma{y}$ contained in the intervals
\begin{equation}\lab{range1}
S_i/2< s_i \leq S_i, \quad U_i/2< |u_i| \leq U_i, \quad
Y_i/2< y_i \leq Y_i.
\end{equation}
Write
$$
S=S_1S_2S_3,\quad U=U_1U_2U_3,\quad Y=Y_1Y_2Y_3.
$$
If $\mcal{N}=0$ there is nothing to prove, and so we assume henceforth
that the dyadic ranges in \eqref{range1} produce
a non-zero value of $\mcal{N}$. In particular we must have
\begin{equation}
  \label{eq:2111.6}
  S\ll B\quad U^2Y\ll B/v^3, \quad S_iUU_iY_i^2\ll B/v^2.
\end{equation}
In this set of lecture notes we will provide two upper bounds for
$\nub$. The object of our first bound is to merely establish linear
growth, without worrying  about the factor involving $\log B$
that we expect to see. By ignoring some of the technical machinery
needed to get better bounds it is hoped that the overall methodology
will be brought into focus.  Later we will indicate how the
expected upper bound can be retrieved with a little more work.

\subsection{A crude upper bound}

We begin by establishing linear growth for $\nub$. Note
that \eqref{eq:2111.6} forces the inequalities $S_i,U_i,Y_i\ll B$. We
proceed to establish the following upper bound.

\begin{lem}\label{lem:2111.7}
We have 
$$
\nub\ll (\log B)^9 \sum_{v\leq B^{1/3}}
  \max_{S_i,U_i,Y_i>0}
\mcal{N}_v(\ma{S};\ma{U};\ma{Y}),
$$
where the maximum is over $S_i,U_i,Y_i>0$ such that \eqref{eq:2111.6} holds.
\end{lem}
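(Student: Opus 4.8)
The plan is to deduce Lemma \ref{lem:2111.7} directly from Lemma \ref{lem:2111.5} by a routine dyadic decomposition of the nine variables $s_1,s_2,s_3,u_1,u_2,u_3,y_1,y_2,y_3$, with the single variable $v$ left to be summed over explicitly. First I would invoke Lemma \ref{lem:2111.5} to write $\nub$ as $2$ times the number of tuples $(v,\ma{s},\ma{u},\ma{y})\in\N^4\times\Z^3\times\N^3$ satisfying \eqref{eq:ut-d4}, the listed coprimality conditions, $u_3>0$, and the height condition $\Psi(v,\ma{s},\ma{u},\ma{y})\leq B$. For any such tuple, reading off the individual entries of the maximum in \eqref{eq:height-d4} and using that $s_i,y_i\geq 1$, $|u_i|\geq 1$, $v\geq 1$, one gets the crude bounds $1\leq s_i\leq B$, $1\leq y_i\leq B$, $1\leq |u_i|\leq B$ for each $i\in\{1,2,3\}$, together with $v^3\leq B$, i.e.\ $v\leq B^{1/3}$. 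This already confines the $v$-sum to the stated range.

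Next I would cover the range of each of the nine quantities $s_1,s_2,s_3,|u_1|,|u_2|,|u_3|,y_1,y_2,y_3$ by $O(\log B)$ dyadic intervals $(X/2,X]$ with $X$ a power of $2$ satisfying $1\leq X\ll B$; this is possible precisely because of the crude upper bounds just extracted. Choosing one such interval for each of the nine quantities decomposes the set of tuples counted in Lemma \ref{lem:2111.5} (for a fixed $v$) into $O((\log B)^9)$ pieces, and the piece associated with a choice $(\ma{S},\ma{U},\ma{Y})$ of dyadic parameters is, by the definitions \eqref{eq:N_dyadic}--\eqref{range1}, counted by exactly $\mcal{N}_v(\ma{S};\ma{U};\ma{Y})$.

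It then remains to bound each piece by the maximum appearing in the statement. Here I would use the observation made just above \eqref{eq:2111.6}: if $\mcal{N}_v(\ma{S};\ma{U};\ma{Y})\neq 0$ then the parameters necessarily satisfy \eqref{eq:2111.6}, while pieces with $\mcal{N}_v=0$ contribute nothing. Hence every piece is at most $\max_{S_i,U_i,Y_i>0}\mcal{N}_v(\ma{S};\ma{U};\ma{Y})$, the maximum being over parameters obeying \eqref{eq:2111.6}. Summing over the $O((\log B)^9)$ dyadic boxes and then over $v\leq B^{1/3}$, and absorbing the factor $2$ into the implied constant, yields $\nub\ll(\log B)^9\sum_{v\leq B^{1/3}}\max_{S_i,U_i,Y_i>0}\mcal{N}_v(\ma{S};\ma{U};\ma{Y})$, as required.

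The hard part is essentially bookkeeping rather than mathematics: there is no genuine obstacle here, since this is merely the packaging step that sets up the real work (estimating $\mcal{N}_v(\ma{S};\ma{U};\ma{Y})$ via Lemmas \ref{p:line} and \ref{p:conic}) carried out in the subsequent sections. The only point requiring a moment's care is verifying that the number of dyadic ranges needed for each of the nine variables is genuinely $O(\log B)$, so that the total count of boxes is $O((\log B)^9)$ and no worse; this is exactly what the crude bounds $s_i,y_i,|u_i|\ll B$ (and $v\leq B^{1/3}$) extracted from $\Psi\leq B$ guarantee.
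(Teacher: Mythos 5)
Your proof is correct and follows essentially the same route as the paper: invoke Lemma \ref{lem:2111.5}, extract $v\leq B^{1/3}$ and the crude bounds $s_i,|u_i|,y_i\ll B$ from the height condition $\Psi\leq B$, dyadically decompose the nine remaining variables into $O((\log B)^9)$ boxes, identify each box's contribution with $\mcal{N}_v(\ma{S};\ma{U};\ma{Y})$, and observe that only boxes satisfying \eqref{eq:2111.6} contribute.
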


\begin{proof}
Our starting point is Lemma \ref{lem:2111.5}. It follows from 
\eqref{eq:height-d4} that $v\leq B^{1/3}$ for any 
$v,\ma{s},\ma{u},\y$ that contributes to the right hand side.
Let us fix a choice of $v\in\N$ such that $v\leq B^{1/3}$, and cover
the ranges  for $\ma{s},\ma{u},\y$ with dyadic intervals. 
Thus for fixed integers $\sigma_i,\nu_i, \eta_i \geq 0$,
we write 
$$
S_i=2^{\sigma_i}, \quad U_i=2^{\nu_i}, \quad Y_i=2^{\eta_i}, 
$$
and
consider the contribution from $\ma{s},\ma{u},\y$ in the range 
\eqref{range1}. But this is just
$\mcal{N}=\mcal{N}_v(\ma{S};\ma{U};\ma{Y})$. Now we have already seen that
$\mcal{N}=0$ unless  \eqref{eq:2111.6} holds. Finally, since 
each $S_i,U_i,Y_i$ is $O(B)$, it follows that the number of dyadic intervals
needed is $O((\log B)^9)$. This completes the proof of the lemma.
\end{proof}

We may now restrict our attention to bounding
$\mcal{N}_v(\ma{S};\ma{U};\ma{Y})$ for fixed values of $S_i,U_i,Y_i>0$
such that   \eqref{eq:2111.6} holds, and fixed $v\leq B^{1/3}$.
In the arguments that follow it will be necessary to focus attention
on primitive vectors $\ma{s}\in\N^3$. To enable this we draw out
possible common factors between $s_1,s_2,s_3$, obtaining 
\begin{equation}
  \label{eq:0312.1}
\mcal{N}_v(\ma{S};\ma{U};\ma{Y})=\sum_{k=1}^\infty \mcal{N}_v^*(k^{-1}\ma{S};\ma{U};\ma{Y}).
\end{equation}
Here $\mcal{N}_v^*(\ma{S};\ma{U};\ma{Y})$ is defined as for
$\mcal{N}_v(\ma{S};\ma{U};\ma{Y})$ but with the extra condition that $\hcf(s_1,s_2,s_3)=1$.
Let us write $S_i'=S_i/k$ and $\ma{S}'=k^{-1}\ma{S}$.

Recall the equation \eqref{eq:ut-d4} that we must count solutions
to, which it will be convenient to denote by $\mcal{T}$, and which we
will think of as defining a variety in $\bfP^2\times \bfP^2 \times \bfP^2$, with
homogeneous coordinates $\ma{s},\ma{u},\y$. 
The key idea will be to count points on the fibres of projections 
$\pi: \mcal{T}\rightarrow \bfP^2\times\bfP^2$.  This amounts to fixing
six of the variables and estimating the number of points on the
resulting plane curve. Since this family of curves will vary with $B$,
so it is vital to obtain bounds that are completely uniform in the
coefficients of the defining equation. 

Let us begin by fixing the variables $\ma{u},\y$, and estimating the
corresponding number of vectors $\ma{s}$. Now it follows from the coprimality conditions in Lemma
\ref{lem:2111.5} that 
$$
\hcf(u_1y_1^2,u_2y_2^2,u_3y_3^2)=1.
$$
For fixed $\ma{u},\y$, \eqref{eq:ut-d4} defines a line in $\bfP^2$. We clearly
have 
\begin{align*}
\mcal{N}_v^*(\ma{S}';\ma{U};\ma{Y})\leq \sum_{\ma{u},\y} M_1(\ma{a};\ma{S}'),
\end{align*}
in the notation of \eqref{eq:0312.2}, with $a_i=u_iy_i^2$. Since 
$\ma{a}$ is primitive, it therefore follows from
Lemma \ref{p:line} 
that
\begin{align*}
\mcal{N}_v^*(\ma{S}';\ma{U};\ma{Y})
\ll \sum_{\ma{u},\y} 
\Big(1+\frac{S}{k^2\max S_iU_iY_i^2}\Big)
&\ll UY+k^{-2}S^{2/3}U^{2/3}Y^{1/3}.
\end{align*}
Here we have used the trivial lower bound $\max\{a,b,c\}\geq
(abc)^{1/3}$, valid for any $a,b,c>0$.
Using \eqref{eq:2111.6} we conclude that
\begin{equation}
  \label{eq:0312.3}
\mcal{N}_v^*(\ma{S}';\ma{U};\ma{Y})
\ll UY+\frac{B}{k^2 v}.
\end{equation}
The second term here will be satisfactory from our point of view, but
the first is disastrous, since we will run into trouble
when it comes to summing over $k$ in \eqref{eq:0312.1}. 

It turns out that an altogether different bound is required to handle
the contribution from really small values of $\ma{S}'$.
For this we will fix values of $\ma{s},\ma{u}$ in \eqref{eq:ut-d4}, and count points on
the resulting family of conics. 
First we need to record the coprimality relation
$$
\hcf(s_iu_i,s_ju_j)=1,
$$
which we claim holds for any of the vectors $\ma{s},\ma{u},\y$ in which we are interested.
But this follows on noting that $\hcf(s_i,s_j)=1$ for any primitive
vector $\ma{s}\in\Z^3$ such that \eqref{eq:ut-d4} holds and
$\hcf(s_i,u_1u_2u_3)=\hcf(y_i,s_j,s_k)=1$.  We now have 
\begin{align*}
\mcal{N}_v^*(\ma{S}';\ma{U};\ma{Y})\leq \sum_{\ma{s},\ma{u}} M_2(\ma{a};\ma{Y}),
\end{align*}
in the notation of \eqref{eq:0312.2}, with $a_i=s_iu_i$. In
particular $a_i$ is non-zero and $\hcf(a_i,a_j)=1$ in the statement of Lemma
\ref{p:conic}, whence 
\begin{align*}
\mcal{N}_v^*(\ma{S}';\ma{U};\ma{Y})
\ll \sum_{\ma{s},\ma{u}} 
\Big(1+\frac{k^{3/2}Y^{1/2}}{S^{1/2}U^{1/2}}\Big)2^{\omega(s_1s_2s_3u_1u_2u_3)}.
\end{align*}
In view of the
bounds $S,U \ll B$, we clearly have 
$$
2^{\omega(s_1s_2s_3u_1u_2u_3)}\ll_{\ve} (s_1s_2s_3u_1u_2u_3)^\ve \ll_\ve
  (SU)^{\ve} \ll_\ve B^{2\ve}.
$$
Once inserted into our bound for
$\mcal{N}_v^*(\ma{S}';\ma{U};\ma{Y})$, and combined with
\eqref{eq:2111.6},  we deduce that
\begin{equation}
  \label{eq:0312.4}
\begin{split}
\mcal{N}_v^*(\ma{S}';\ma{U};\ma{Y})
&\ll_\ve B^{2\ve}
\Big(\frac{SU}{k^3}+\frac{S^{1/2}U^{1/2}Y^{1/2}}{k^{3/2}}\Big)\\
&\ll_\ve 
\frac{SU B^{2\ve}}{k^3}+\frac{B^{1+2\ve}}{k^{3/2}v^{3/2}}.
\end{split}
\end{equation}
Here the second term will provide a satisfactory contribution, and we
will balance the first term with our earlier estimate \eqref{eq:0312.3}.

Note that
$$
\min\Big\{\frac{SU}{k^{3}}, UY\Big\}\leq \frac{U\sqrt{SY}}{k^{3/2}}\ll \frac{B}{k^{3/2}v^{3/2}},
$$
by \eqref{eq:2111.6}.  It therefore follows from 
\eqref{eq:0312.4} and  \eqref{eq:0312.3} that
$$
\mcal{N}_v^*(\ma{S}';\ma{U};\ma{Y})
\ll_\ve 
\frac{B^{1+2\ve}}{k^{3/2}v}.
$$
Once inserted into \eqref{eq:0312.1}, and then into the statement of
Lemma \ref{lem:2111.7}, we may conclude that
\begin{align*}
\nub
&\ll_\ve (\log B)^9 \sum_{v\leq B^{1/3}}
\sum_{k=1}^\infty \frac{B^{1+2\ve}}{k^{3/2}v}\\
&\ll_\ve B^{1+2\ve}(\log B)^{10}\\
&\ll_\ve B^{1+3\ve}.
\end{align*}
Recall that $N_{U_2}(B)\leq N_{U}(B)$, where $U\subset S$ is the
open subset associated to the surface \eqref{eq:d4}, and $N_{U_2}(B)$ is
the counting function associated to \eqref{eq:S2}. On redefining the
choice of parameter $\ve>0$, we have therefore
established the following result.

\begin{thm}\label{sub-d4}
We have $N_{U_2}(B) \ll_\ve B^{1+\ve},$ for any $\ve>0$.
\end{thm}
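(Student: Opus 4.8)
The plan is to follow the universal torsor strategy set up above, essentially carrying out the sequence of reductions already prepared. First I would invoke Lemma~\ref{lem:2111.5} to replace the point count $N_{U_2}(B)$, which by a unimodular change of variables equals $\nub$ for the surface \eqref{eq:d4}, by a count of integral points $(v,\ma{s},\ma{u},\ma{y})$ on the affine hypersurface \eqref{eq:ut-d4}, subject to the stated coprimality and height restrictions. Next I would dyadically decompose the ranges of $s_1,s_2,s_3,u_1,u_2,u_3,y_1,y_2,y_3$: since \eqref{eq:2111.6} forces each coordinate to be $O(B)$, only $O((\log B)^{9})$ dyadic boxes contribute, and $v$ ranges over integers with $v\le B^{1/3}$; this is the content of Lemma~\ref{lem:2111.7}. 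The problem is thereby reduced to bounding a single box-count $\mcal{N}_v(\ma{S};\ma{U};\ma{Y})$ under the necessary size constraints \eqref{eq:2111.6}. A final M\"obius step \eqref{eq:0312.1} pulls out the common divisor $k=\hcf(s_1,s_2,s_3)$, leaving the primitive count $\mcal{N}_v^*(k^{-1}\ma{S};\ma{U};\ma{Y})$ to estimate.

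The crux is to estimate $\mcal{N}_v^*$ in two complementary ways, viewing \eqref{eq:ut-d4} as a family of plane curves in $\bfP^2$. Fixing $\ma{u},\ma{y}$, the equation becomes a line in the $\ma{s}$-variables whose coefficient vector $(u_1y_1^2,u_2y_2^2,u_3y_3^2)$ is primitive by the coprimality conditions in Lemma~\ref{lem:2111.5}; Lemma~\ref{p:line} then gives $\mcal{N}_v^*\ll UY + k^{-2}S^{2/3}U^{2/3}Y^{1/3} \ll UY + B/(k^{2}v)$ after inserting \eqref{eq:2111.6}. Alternatively, fixing $\ma{s},\ma{u}$, the equation is a conic in the $\ma{y}$-variables whose diagonal coefficients $s_iu_i$ are pairwise coprime (which one checks from the relations in Lemma~\ref{lem:2111.5} together with \eqref{eq:ut-d4}); Lemma~\ref{p:conic} applies, and absorbing the divisor factor $2^{\omega(s_1s_2s_3u_1u_2u_3)}\ll_\ve B^{\ve}$ and using \eqref{eq:2111.6} yields $\mcal{N}_v^*\ll_\ve k^{-3}SU\,B^{\ve} + k^{-3/2}v^{-3/2}B^{1+\ve}$.

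I would then combine these: the second terms of both bounds are already acceptable, while the remaining first terms are $UY$ and $k^{-3}SU$. The key elementary inequality is $\min\{k^{-3}SU,\,UY\}\le k^{-3/2}U\sqrt{SY}\ll k^{-3/2}v^{-3/2}B$, once more by \eqref{eq:2111.6}. Hence in all cases $\mcal{N}_v^*(k^{-1}\ma{S};\ma{U};\ma{Y})\ll_\ve k^{-3/2}v^{-1}B^{1+\ve}$. Summing this convergent series over $k\ge1$, then over $v\le B^{1/3}$ (costing a factor $\log B$), then over the $O((\log B)^{9})$ dyadic boxes, one gets $\nub\ll_\ve B^{1+\ve}(\log B)^{10}$; since $N_{U_2}(B)\le N_U(B)$ and $(\log B)^{10}\ll_\ve B^{\ve}$, redefining $\ve$ gives the claimed bound $N_{U_2}(B)\ll_\ve B^{1+\ve}$.

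The main obstacle is precisely the divisor $k$: the line-counting estimate contributes a term $UY$ carrying no decay in $k$, so a naive summation of \eqref{eq:0312.1} over $k$ would diverge. This is exactly why the conic bound is indispensable — it supplies a term $k^{-3}SU$ that is small precisely when $k^{-1}\ma{S}$ is small — and why the interpolation through the min inequality is the decisive technical point; everything else is bookkeeping. A secondary point that genuinely requires care, though it is routine, is verifying that the coprimality hypotheses demanded by Lemmas~\ref{p:line} and~\ref{p:conic} (primitivity of the line's coefficient vector, pairwise coprimality of the conic's coefficients) really do follow from the conditions recorded in Lemma~\ref{lem:2111.5} together with the torsor equation \eqref{eq:ut-d4}.
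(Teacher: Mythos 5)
Your proposal follows the paper's proof essentially step for step: translate to the torsor count via Lemma~\ref{lem:2111.5}, dyadically decompose and reduce to $\mcal{N}_v^*$ via Lemma~\ref{lem:2111.7} and \eqref{eq:0312.1}, derive the line-fiber bound \eqref{eq:0312.3} and the conic-fiber bound \eqref{eq:0312.4}, and interpolate between them with $\min\{SU/k^3,UY\}\le U\sqrt{SY}/k^{3/2}\ll B/(k^{3/2}v^{3/2})$ before summing over $k$, $v$, and the dyadic boxes. The arithmetic details and the decisive technical points you flag (the divergence of the $UY$ term over $k$, and verifying the coprimality hypotheses of Lemmas~\ref{p:line} and~\ref{p:conic}) match the paper exactly.
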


The reader will note that there many places in our argument where we
have been wasteful. The most damaging has been in our use of the
trivial bound $2^{\omega(n)}=O_\ve(n^\ve)$, in the deduction of \eqref{eq:0312.4}.
Using the fact that $2^{\omega(n)}$ has average order $\zeta(2)^{-1}\log n$, it is
not particularly difficult to replace the $B^\ve$ in Theorem~\ref{sub-d4} with a
large power of $\log B$.

\begin{ex}\label{ex:expA}
By analysing the proof of Theorem \ref{sub-d4}, find
an explicit value of $A\geq 6$ such that $N_{U_2}(B) \ll B (\log
B)^A.$ 
\end{ex}

In the next section we will be able to show that the value $A=6$
is an admissible exponent, as claimed in Theorem \ref{main-d4}.

\subsection{A better upper bound}

Crucial to the proof of Theorem \ref{sub-d4} was an investigation of
the density of  integer solutions to the equation 
(\ref{eq:ut-d4}). It is in our treatment of this equation that we
will hope to gain some saving. 
Lets put the problem on a more general footing. 
For any $\ma{A},\ma{B},\ma{C}\in\R_{> 1}^3$, let 
$\mcal{M}(\ma{A},\ma{B},\ma{C})$ denote the number of $\ma{a},\ma{b}, \ma{c} \in Z_*^3$
such that 
\begin{equation}\lab{key-x}
a_1b_1c_1^2+a_2b_2c_2^2+a_3b_3c_3^2=0
\end{equation}
and 
$$
\lab{range-x}
|a_i| \leq A_i, \quad |b_i| \leq B_i, \quad |c_i| \leq C_i,
$$
with
\begin{equation}\lab{cat-x}
\hcf(a_i,c_j)=
\hcf(c_i,c_j)=1
\end{equation}
and 
\begin{equation}\lab{cut-x}
|\mu(a_1a_2a_3)|=1, \quad \hcf(a_i,b_j,b_k)=1.
\end{equation}
Here, we recall that $Z_{*}^3$ denotes the set of primitive vectors in
$\Z^3$ with all components non-zero. 
It will be convenient to set
$$
A=A_1A_2A_3, \quad B=B_1B_2B_3, \quad C=C_1C_2C_3.
$$
Arguing exactly as in the previous section, it is not 
difficult to deduce from Lemmas \ref{p:line} and \ref{p:conic}
that
\begin{align*}
\mcal{M}(\ma{A},\ma{B},\ma{C})
&\ll_\ve A\min\{C,
A^{\ve}B^{1+\ve}\}+ A^{2/3}B^{2/3}C^{1/3}+
A^{1/2+\ve}B^{1/2+\ve}C^{1/2}
\end{align*}
for any $\ve>0$, whence
\begin{equation}
  \label{eq:0812.1}
\mcal{M}(\ma{A},\ma{B},\ma{C})
\ll_\ve A^{2/3}B^{2/3}C^{1/3}+
A^{1+\ve}B^{1/2+\ve}C^{1/2}.
\end{equation}
By working a little harder, we would like to replace the terms $A^{\ve}$, $B^{\ve}$
with something rather smaller.

The main problem to be faced emerges in the application of
Lemma~\ref{p:conic}, which gives 
$$
\mcal{M}(\ma{A},\ma{B},\ma{C})
\ll \sum_{\ma{a},\ma{b}} \Big( 1+ \frac{C^{1/2}}{|a_1a_2a_3b_1b_2b_3|^{1/2}}\Big)2^{\omega(a_1a_2a_3b_1b_2b_3)}.
$$
Rather than using the trivial bound $2^{\omega(n)}=O_\ve({n^\ve})$, as
above, we can try to make use of the fact that $2^{\omega(n)}$ has average
order $\zeta(2)^{-1}\log n$ in order to get some saving. Following
this line of thought it is fairly straightforward to show that $A^{\ve}B^{\ve}$ can
be replaced by $(\log A)^{3/2}(\log B)^{3}$ in \eqref{eq:0812.1}. 
However this would still not be enough to deduce the best possible
upper bound for $\nub$ that we would like.
Let us simplify matters by considering only the contribution
$$
\mcal{S}(\ma{A},\ma{B})
= \sum_{\ma{a},\ma{b}} 2^{\omega(a_1a_2a_3b_1b_2b_3)},
$$
to the above estimate for 
$\mcal{M}(\ma{A},\ma{B},\ma{C})$. Then 
$\mcal{S}(\ma{A},\ma{B})$ has exact order of magnitude 
$$
AB\prod_{i=1}^3(\log A_i)(\log B_i),
$$
so how can we hope to do better than this?
The crucial observation comes in noting that we are only interested in
summing over values of $\ma{a},\ma{b}$ for which the corresponding
conic \eqref{key-x} has a non-zero solution $\ma{c}\in\Z^3$, with
$\hcf(c_i,c_j)=1$. If we denote 
this finer quantity by $\mcal{S}^*(\ma{A},\ma{B})$, then it is
actually possible to show that
\begin{equation}
  \label{eq:vs-serre}
\mcal{S}^*(\ma{A},\ma{B})\ll AB.
\end{equation}
This is established in \cite[Lemma 1]{d4}, and is simply a
facet of the well-known fact that a random plane conic doesn't have a rational point. 
This should be compared with the work of Serre \cite{serre}. Using 
the large sieve inequality, Serre has shown 
$$
\#\{\ma{y}\in\Z^3:|\ma{y}|\leq Y, ~(-y_1y_3,-y_2y_3)_\Q=1\} \ll \frac{Y^3}{(\log Y)^{3/2}},
$$
where 
$$
(a,b)_\Q=\left\{
\begin{array}{ll}
1,&\mbox{if $ax^2+by^2=z^2$ has a solution $(x,y,z)\neq \ma{0}$ in $\Q^3$,}\\
-1,&\mbox{otherwise,}
\end{array}
\right.
$$
denotes the Hilbert symbol. Guo \cite{g} has
established an asymptotic formula for the corresponding quantity in
which one counts only odd values of $y_1,y_2,y_3$ such that the
product $y_1y_2y_3$ is square-free.

Thus in addition to considering the density of integer solutions to 
diagonal quadratic equations, as in the previous section, we also
need to consider how often such an equation has at least one
non-trivial integer solution in order to derive sufficiently
sharp bounds. The outcome of this investigation is the following
result, which is established in \cite[Lemma 2]{d4}.

\begin{lem}\lab{M1-x}
For any $\ve>0$, we have
$$
\mcal{M}(\ma{A},\ma{B},\ma{C}) \ll_\ve A^{2/3}B^{2/3}C^{1/3} +
\sigma\tau AB^{1/2}C^{1/2},
$$
where
$$
\sigma=1+\frac{\min\{A, B\}^\ve}{\min\{B_iB_j\}^{1/16}},
\quad 
\tau=1+\frac{\log B}{\min\{B_iB_j\}^{1/16}}.
$$
\end{lem}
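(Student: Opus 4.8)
The plan is to split the count according to two competing strategies — a ``geometry of numbers'' count of the $\ma c$‑variable on the conic for fixed $\ma a,\ma b$, and a count of how often that conic has any integer point at all — and then to balance these against the elementary line count of Lemma~\ref{p:line}. For the first, I fix $\ma a$ and $\ma b$ and regard \eqref{key-x} as a diagonal ternary conic in $\ma c$ with coefficients $a_1b_1,a_2b_2,a_3b_3$. A short preliminary argument shows these coefficients are pairwise coprime on any primitive solution: $|\mu(a_1a_2a_3)|=1$ in \eqref{cut-x} already gives $\hcf(a_i,a_j)=1$, and the conditions $\hcf(a_i,c_j)=\hcf(c_i,c_j)=1$ of \eqref{cat-x} together with $\hcf(a_i,b_j,b_k)=1$ rule out the remaining possibilities. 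Lemma~\ref{p:conic} then yields, with $D:=|a_1a_2a_3b_1b_2b_3|$,
$$
\mcal M(\ma A,\ma B,\ma C)\ \ll\ \sum_{(\ma a,\ma b)\in\mcal E}\Big(1+\frac{C}{D}\Big)^{1/3}\tau(D),
$$
where $\mcal E$ is the set of $(\ma a,\ma b)$ in the given boxes for which the conic \eqref{key-x} is everywhere locally soluble; this restriction is legitimate because, by Hasse--Minkowski for conics, a conic with no rational point carries no integer point. Writing $(1+C/D)^{1/3}\ll 1+(C/D)^{1/3}$, the piece $C^{1/3}\sum_{\ma a,\ma b}\tau(D)D^{-1/3}$ contributes $O_\varepsilon(A^{2/3+\varepsilon}B^{2/3+\varepsilon}C^{1/3})$ by routine divisor sums, and the remaining piece is $\sum_{(\ma a,\ma b)\in\mcal E}\tau(D)$. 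Naively this last sum has size $AB$ times six logarithms, but the everywhere‑local‑solubility constraint kills most of them: this is exactly the input \cite[Lemma~1]{d4} (compare \eqref{eq:vs-serre}), a large‑sieve estimate in the spirit of Serre \cite{serre} on the scarcity of rational points on random conics, and it is its explicit, box‑uniform form that will produce the factors $\sigma,\tau$.

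In the complementary ranges — roughly when $C$ is not the dominant box — I would instead fix two of the triples $\ma a,\ma b,\ma c$ and count the third on the resulting line by Lemma~\ref{p:line}, precisely as in the derivation of \eqref{eq:0812.1}. This route delivers the clean term $A^{2/3}B^{2/3}C^{1/3}$ with no extraneous logarithms, at the expense of ``bad'' by‑products such as $AB$‑ or $BC$‑type contributions; balancing it against the conic estimate above removes those by‑products and turns the crude second term $A^{1+\varepsilon}B^{1/2+\varepsilon}C^{1/2}$ into the sharpened $\sigma\tau\,AB^{1/2}C^{1/2}$, while the first term $A^{2/3}B^{2/3}C^{1/3}$ survives unchanged. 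Patching the two regimes and optimising the split then gives the stated bound.

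The real obstacle is the estimate for $\sum_{(\ma a,\ma b)\in\mcal E}\tau(D)$, i.e. the input \cite[Lemma~1]{d4}: one must show that everywhere‑local‑solubility of \eqref{key-x} is rare enough to absorb the divisor weight, \emph{with} completely explicit and uniform dependence on the individual sizes $B_iB_j$. This is where the large sieve is applied, and reverse‑engineering the resulting bound — the appearance of $\min\{B_iB_j\}^{1/16}$ in the denominators of $\sigma$ and $\tau$, and of the $\min\{A,B\}^\varepsilon$ and $\log B$ numerators — is the substantive part; once that lemma is in hand, the rest is bookkeeping with Lemmas~\ref{p:line} and~\ref{p:conic} and elementary divisor sums.
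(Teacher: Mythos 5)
Your sketch tracks the logic the paper itself offers --- neither gives a from-scratch proof here (the paper defers to \cite[Lemma 2]{d4}) --- and you correctly identify the two competing counts, on the conic via Lemma~\ref{p:conic} and on the line via Lemma~\ref{p:line}, together with the decisive observation that in the conic count one need only sum the divisor weight over those $(\ma a,\ma b)$ for which \eqref{key-x} actually carries a primitive $\ma c$. That restriction is exactly what \cite[Lemma 1]{d4} (the Serre-type estimate underlying \eqref{eq:vs-serre}) exploits, and it is the ``crucial observation'' the text emphasises; the clean $A^{2/3}B^{2/3}C^{1/3}$ term comes, as you say, from the line count, and $\sigma\tau$ is the quantitative trace of the solubility constraint. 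So the approach is essentially the one the paper points at.

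One caution on a detail you assert too quickly: your ``short preliminary argument'' that \eqref{cat-x} and \eqref{cut-x} force pairwise coprimality of the conic coefficients $a_ib_i$ does not close. The cases $p\mid a_i,a_j$ and $p\mid a_i,b_j$ are indeed killed by $|\mu(a_1a_2a_3)|=1$, $\hcf(a_i,c_k)=1$ and $\hcf(a_i,b_j,b_k)=1$, but the case $p\mid b_i,b_j,c_k$ is not excluded by any hypothesis written into the definition of $\mcal{M}$. In the actual application to $\mcal{M}(\ma U,\ma S,\ma Y)$ this is patched by the extra condition $\hcf(y_k,s_i,s_j)=1$ coming from Lemma~\ref{lem:2111.5}, which translates to $\hcf(c_k,b_i,b_j)=1$; so either carry that hypothesis along explicitly, or handle the offending prime by dividing \eqref{key-x} through (writing $b_i=pb_i'$, $b_j=pb_j'$, $c_k=pc_k'$ reduces to a conic of no larger discriminant) before invoking Lemma~\ref{p:conic}. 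Beyond that, you quite reasonably stop short of deriving the specific $1/16$-exponents in $\sigma$ and $\tau$, which emerge from the explicit form of \cite[Lemma 1]{d4} and the dyadic bookkeeping in \cite[Lemma 2]{d4}; the paper stops there too.
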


It is clear that this constitutes a substantial sharpening over our
earlier estimate \eqref{eq:0812.1} for
$\mcal{M}(\ma{A},\ma{B},\ma{C})$.
Nonetheless this is still not enough on its own, and we will need an
alternative estimate when $B_1, B_2, B_3$ have particularly awkward
sizes.  The following result is rather easy to establish.

\begin{lem}\lab{M2-x}
We have
$$
\mcal{M}(\ma{A},\ma{B},\ma{C}) \ll AB_iB_j(C_k +  C_iC_jA_k^{-1}) (\log AC)^2,
$$
for any permutation $\{i,j,k\}$ of the set $\{1,2,3\}$.  
\end{lem}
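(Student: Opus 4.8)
The plan is to count the solutions of \eqref{key-x} by fixing a suitable subset of the variables and summing a trivial estimate for the number of remaining variables, choosing the dissection so that the decisive factor $c_k^2$ can be read off from the other two terms. Concretely, I would regard \eqref{key-x} as
$$
a_kb_kc_k^2 = -(a_ib_ic_i^2 + a_jb_jc_j^2),
$$
and sum over $\ma{a}$ freely (a factor $\ll A_1A_2A_3 = A$), over $c_i$ and $c_j$ freely (a factor $\ll C_iC_j$), and over $b_i, b_j$ freely (a factor $\ll B_iB_j$). Once all of these are fixed, the quantity $a_kb_kc_k^2$ is a fixed non-zero integer $n$, say, whose absolute value is $\ll A_kB_kC_k^2$; I must count triples $(a_k,b_k,c_k)$ with $a_kb_kc_k^2 = n$, $|c_k|\le C_k$ and — crucially — $|a_k|\le A_k$.

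First I would handle the count of such triples. Given the nonzero value $n$, the number of ways to write $n = a_kb_kc_k^2$ with prescribed sign is at most the number of ways to choose a divisor $c_k^2 \mid n$ (equivalently a divisor $m$ of $n$ that is the square part of a chosen factorisation) times the number of ways to factor $n/c_k^2 = a_kb_k$, which is $\tau(|n/c_k^2|) \ll (\log|n|)$ up to a constant, i.e.\ $\ll \log(AC)$ after using $|n|\ll A_kB_kC_k^2$ and $B_k \ll B \ll (AC)^{O(1)}$ in our range. Summing the divisor-type bound over the choice of $c_k$ with $|c_k|\le C_k$ is wasteful; instead I want the constraint $|a_k|\le A_k$ to enter. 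For fixed $c_k$, the count of $(a_k,b_k)$ with $a_kb_k = n/c_k^2$ and $|a_k|\le A_k$ is still $\ll \tau(|n/c_k^2|) \ll \log(AC)$, but summing over $c_k$ naively gives $C_k\log(AC)$, producing $AB_iB_jC_iC_jC_k\log(AC) = ABC\log(AC)$, which is too large. The fix is to observe that once $a_k, b_k, c_i, c_j, \ma{a}, b_i, b_j$ are fixed, $c_k$ is determined up to finitely many values as a square root of a fixed rational; so I instead fix $c_i,c_j,b_i,b_j,\ma{a}$ (giving $AB_iB_jC_iC_j$) and then $a_k$ with $|a_k|\le A_k$ (giving $A_k$, hence total $AB_iB_jC_iC_j \cdot A_k/A_k$ — here one absorbs via $A = A_iA_jA_k$), after which $b_kc_k^2 = -(a_ib_ic_i^2+a_jb_jc_j^2)/a_k$ is a fixed integer and the number of $(b_k,c_k)$ is $\ll \log(AC)$ by the divisor bound, with a further $\log(AC)$ lost in summing over whether $a_k\mid$ that numerator etc. This yields
$$
\mcal{M}(\ma{A},\ma{B},\ma{C}) \ll A B_iB_j C_iC_j A_k^{-1} (\log AC)^2,
$$
which is the second term; the first term $AB_iB_jC_k(\log AC)^2$ arises from the complementary dissection in which one fixes instead $\ma{a}, b_i, b_j, c_k$ (a factor $AB_iB_jC_k$) and then, with $a_kb_kc_k^2$ — no, rather with the linear relation $a_ib_ic_i^2 + a_jb_jc_j^2 = -a_kb_kc_k^2$ now fixed on the right, counts $(b_i, c_i)$ from the divisor bound on $b_ic_i^2$ dividing a fixed integer and $(b_j,c_j)$ similarly, each contributing $\ll \log(AC)$. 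Taking the better of the two ranges, or simply adding them, gives the stated bound.

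The main obstacle is bookkeeping rather than anything deep: one must verify that in each dissection exactly three ``free'' sums of total size $AB_iB_j\cdot(\text{one of }C_k\text{ or }C_iC_jA_k^{-1})$ are used and that the residual counting problem — a product of two variables (or a variable times a square) equalling a fixed nonzero integer — genuinely costs only $O(\log(AC))$ per instance via $\tau(n)\ll \log n$, with the square factor $c^2$ contributing at most another $O(\log n)$ (since $\#\{c : c^2\mid n\}\le \tau(n)$). One should also check that the non-vanishing of all components, guaranteed by $\ma{a},\ma{b},\ma{c}\in Z_*^3$, ensures the fixed integer $n$ is nonzero so that these divisor bounds apply; and that the coprimality conditions \eqref{cat-x}, \eqref{cut-x} are simply dropped (they only reduce the count). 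No appeal to the geometry of numbers or to Lemmas~\ref{p:line}, \ref{p:conic} is needed here — this lemma is the ``easy'' complementary estimate, valid precisely when $B_1,B_2,B_3$ are badly unbalanced and Lemma~\ref{M1-x} is weak.
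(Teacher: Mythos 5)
Your proposal has two genuine gaps and mis-states the paper's method. The second term $A B_i B_j C_i C_j A_k^{-1}(\log AC)^2$: you fix $\ma{a},b_i,b_j,c_i,c_j$ and then claim the number of $(b_k,c_k)$ with $b_kc_k^2$ equal to a fixed nonzero integer is $\ll\log(AC)$ ``by the divisor bound.'' But $\tau(n)\ll\log n$ is false pointwise; the sharp uniform bound is only $\tau(n)\ll_\ve n^\ve$, and $\log$ appears only after averaging. This can be repaired --- for instance by fixing $c_k$ first and viewing \eqref{key-x} as a congruence on one of the surviving $b$-variables modulo $a_kc_k^2$, exploiting the coprimality in \eqref{cat-x}, \eqref{cut-x} --- but that is not what you wrote, and the paper proceeds differently.

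The first term $A B_i B_j C_k(\log AC)^2$ is the real trouble. As written, your dissection fixes $\ma{a},b_i,b_j,c_k$ and then invokes ``the divisor bound on $b_ic_i^2$ dividing a fixed integer,'' but $b_i$ is already fixed, and the right-hand side $a_kb_kc_k^2$ is \emph{not} fixed because $b_k$ is still free, so there is no fixed integer to divide. Moreover, contrary to your closing remark, the paper's proof genuinely uses Lemma~\ref{p:line}. With $\ma{a},b_i,b_j,c_k$ fixed, one reads \eqref{key-x} as the congruence $a_ib_it^2 + a_jb_j\equiv 0\pmod{a_kc_k^2}$, which has $\rho(a_kc_k^2;a_ib_i,a_jb_j)\le\tau(a_k)\tau(c_k)$ solutions $t$, and for each one imposes $c_i\equiv t\,c_j\pmod{a_kc_k^2}$. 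Lemma~\ref{p:line} then bounds the number of primitive $(c_i,c_j)$ in the box by $\ll 1+ C_iC_j/|a_kc_k^2|$, after which $b_k$ is determined. The leading ``$1$'' in that estimate is exactly what yields $AB_iB_jC_k$ after summing; an elementary count over $c_j$ (or $c_i$) instead gives $\ll \min\{C_i,C_j\}+C_iC_j/|a_kc_k^2|$, which loses a factor $\min\{C_i,C_j\}$ and does not recover the stated bound. So the geometry of numbers is essential here, and the lemma is not the ``easy'' estimate you describe.
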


\begin{proof}
For fixed integers $a,b, q$,  let $\rho(q;a,b)$ denote the number of solutions to the
congruence 
$
at^2 + b \equiv 0 \bmod{q}.
$
We then have 
\begin{equation}\lab{broken}
\rho(q;a,b)\leq \sum_{d \mid q} |\mu(d)| \Big(\frac{-ab}{d}\Big).
\end{equation}
It will clearly suffice to establish Lemma \ref{M2-x} in the case $(i,j,k)=(1,2,3)$, say.
Now it follows from (\ref{key-x}) that for given $a_i, b_1,b_2,c_3$, and each corresponding
solution $t$ of the congruence
$$
a_1b_1t^2+a_2b_2\equiv 0 \pmod{a_3c_3^2},
$$
we must have
$c_1 \equiv t c_2 \bmod{a_3c_3^2}$ in any solution to be counted.    This gives rise to an equation of the
form $\ma{h}.\ma{w}=0$, with $\ma{h}=(1,-t,a_3c_3^2)$ and
$\ma{w}=(c_1,c_2,k)$.  Upon recalling that
$\hcf(c_1,c_2)=1$ from (\ref{cat-x}), an application of Lemma
\ref{p:line} therefore yields the bound 
$$
\ll \rho(a_3c_3^2;a_1b_2,a_2b_2)\Big(1+ \frac{C_1C_2}{|a_3c_3^2|}\Big),
$$ 
for the number of possible $b_3, c_1,c_2$ given fixed choices of
$a_i,b_1,b_2$ and $c_3$.  It now follows from (\ref{broken}) that
\begin{align*}
\mcal{M}(\ma{A},\ma{B},\ma{C})
&\ll 
\sum_{a_i,b_1,b_2,c_3} \rho(a_3c_3^2;a_1b_2,a_2b_2)\Big(1+ \frac{C_1C_2}{|a_3c_3^2|}\Big)\\ 
&\ll 
\sum_{a_i,b_1,b_2,c_3}\sum_{d
\mid a_3c_3} |\mu(d)| 
\Big(\frac{-a_1a_2b_1b_2}{d}\Big)\Big(1+ \frac{C_1C_2}{|a_3c_3^2|}\Big)\\
&\ll \sum_{a_i,b_1,b_2,c_3} \tau(a_3)\tau(c_3)+ 
C_1C_2\sum_{a_i,b_1,b_2,c_3} \frac{\tau(a_3)\tau(c_3)}{|a_3c_3^2|}.
\end{align*}
A simple application of partial summation now reveals that
\begin{align*}
\mcal{M}(\ma{A},\ma{B},\ma{C})
&\ll \big(AB_1B_2C_3+ A_1A_2B_1B_2C_1C_2\big) (\log AC)^2,
\end{align*}
as required to complete the proof of Lemma \ref{M2-x}.
\end{proof}

We are now ready to combine Lemmas \ref{M1-x} and \ref{M2-x} to get a
sharper upper bound for $N_U(B)$. Taking 
Lemma \ref{lem:2111.5} as our starting point we need to bound the 
quantity $\mcal{N}=\mcal{N}_v(\ma{S};\ma{U};\ma{Y})$ defined in \eqref{eq:N_dyadic}, for fixed choices of 
$v\in\N$ and $S_i, U_i, Y_i >0$. 
As previously we will need to extract common factors from
$s_1,s_2,s_3$, leading to the equality 
\eqref{eq:0312.1}.  Writing $S_i'=S_i/k$ and $\ma{S}'=k^{-1}\ma{S}$,
as before, it is a simple matter to check that we have 
$$
\mcal{N}_v^*(\ma{S}';\ma{U};\ma{Y})\leq 
\mcal{M}(\ma{U},\ma{S}',\ma{Y}),
$$
with  $(\ma{a},\ma{b},\ma{c})=(\ma{u},\ma{s},\ma{y})$.
Indeed we plainly have 
$$
\hcf(u_i,y_j)=
\hcf(y_i,y_j)=1, \quad |\mu(u_1u_2u_3)|=\hcf(u_i,s_j,s_k)=1,
$$
and $\ma{u}, \ma{s}, \ma{y}\in Z_*^4$,
for any vectors counted by $\mcal{N}_v^*(\ma{S}';\ma{U};\ma{Y})$, as
required for $\mcal{M}(\ma{U},\ma{S}',\ma{Y})$.
It now follows from \eqref{eq:0312.1} and Lemma \ref{M1-x} that
\begin{align*}
\mcal{N}_v(\ma{S};\ma{U};\ma{Y})
&\ll_\ve \sum_{k=1}^\infty  \Big(
\frac{U^{2/3}S^{2/3}Y^{1/3}}{k^2} +
k^{2/16}\sigma\tau \frac{US^{1/2}Y^{1/2}}{k^{3/2}}\Big)\\
&\ll_\ve U^{2/3}S^{2/3}Y^{1/3}+ \sigma\tau US^{1/2}Y^{1/2},
\end{align*}
for any $\ve>0$, where
$$
\sigma=1+\frac{\min\{S, U\}^\ve}{\min\{S_iS_j\}^{1/16}},
\quad 
\tau=1+\frac{\log B}{\min\{S_iS_j\}^{1/16}}.
$$
In order to obtain our final estimate for $\nub$ we need to sum this
bound over all positive integers $v\leq B^{1/3}$, as in Lemma
\ref{lem:2111.7}, and over all possible dyadic intervals 
for $S_i,U_i,Y_i$, subject to 
(\ref{eq:2111.6}).

Suppose for the moment that we want to sum over all
possible dyadic intervals $X \leq |x|<2X$, for which $|x| \leq
\mcal{X}$.  Then in deducing Lemma \ref{lem:2111.7} we employed the
basic bound
$O(\log \mcal{X})$ for the number of possible choices
for $X$.  In the present investigation we will be more
efficient and take advantage of the easily established estimates 
$$
\sum_{X} X^{\delta} \ll_\delta 
\left\{
\begin{array}{ll}
1, & \mbox{if $\delta<0$},\\
\mcal{X}^{\delta}, & \mbox{if $\delta>0$},
\end{array}
\right.
$$
where the sum is over dyadic intervals for $X\leq \mcal{X}$. We will
make frequent use of these bounds without further mention.

Returning to our estimate for $\mcal{N}_v(\ma{S};\ma{U};\ma{Y})$, we
may now conclude from the bound $Y_i\leq B^{1/2}/(v^2S_iUU_i)^{1/2}$
in \eqref{eq:2111.6} that
\begin{align*}
\nub
&\ll_\ve \sum_{v\leq B^{1/3}} \sum_{S_i,U_i,Y_i}\Big(
U^{2/3}S^{2/3}Y^{1/3}+ \sigma\tau US^{1/2} Y^{1/2}\Big)\\
&\ll_\ve B^{1/2}\sum_{v\leq B^{1/3}} \sum_{S_i,U_i}
\frac{S^{1/2}}{v}+ 
\sum_{v\leq B^{1/3}} \sum_{S_i,U_i,Y_i }\sigma\tau US^{1/2} Y^{1/2}\\
&\ll_\ve B(\log B)^6+
\sum_{v\leq B^{1/3}} \sum_{S_i,U_i,Y_i }\sigma\tau US^{1/2} Y^{1/2}.
\end{align*}
The first term on the right-hand side is clearly satisfactory, and it
remains to deal with the second term, which we denote by $R$ for
convenience. We would like to show that $R\ll B(\log B)^6$.

Suppose without loss of generality that $S_1\leq S_2\leq
S_3$, so that in particular $\min\{S_iS_j\}=S_1S_2$ in $\sigma$ and $\tau.$  If
there is a constant $A>0$ such that 
$
S_3\leq (S_1S_2)^A,
$
then it follows that
$$
\sigma\leq (S_1S_2)^{\ve-1/16}S_3^\ve\leq (S_1S_2)^{(1+A)\ve-1/16}\ll 1,
$$
provided that $\ve$ is sufficiently small.  Taking $\tau \ll \log B$,
we may then argue as above to conclude that there is a contribution of
$O(B(\log B)^6)$ to $R$ from this case.
Suppose now that there exists $A'>0$ such that $U\leq
(S_1S_2)^{A'}$. Then we have $\sigma\ll 1$ and $\tau\ll \log B$, so
that there is a contribution of $O(B(\log
B)^6)$ to $R$ in this case too.

Finally it remains to consider the contribution to $\nub$ from 
$S_i,U_i,Y_i$ such that
\begin{equation}
  \label{eq:friday}
S_1S_2\leq \min \{S_3,U\}^\delta, 
\end{equation}
for some small value of $\delta>0$, with $S_1\leq S_2\leq S_3$. 
Let us denote this contribution $N_0$, say. 
To estimate $N_0$ we will return to the task of estimating 
$\mcal{N}_v(\ma{S};\ma{U};\ma{Y})$ for fixed $v, S_i,U_i,Y_i$, but
this time apply Lemma \ref{M2-x} with $(i,j,k)=(1,2,3)$. This gives
\begin{align*}
\mcal{N}_v(\ma{S};\ma{U};\ma{Y})
&\ll (\log B)^2 \big(
US_1S_2Y_3 +  U_1U_2S_1S_2Y_1Y_2\big).
\end{align*}
We must now sum over dyadic intervals for $S_i,U_i,Y_i$.
Thus it follows from the bound $Y_i\leq B^{1/2}/(v^2S_iUU_i)^{1/2}$
in \eqref{eq:2111.6} that
\begin{align*}
N_0
&\ll (\log B)^2\sum_{v\leq B^{1/3}} \sum_{S_i,U_i,Y_i}
\big(US_1S_2Y_3 +  U_1U_2S_1S_2Y_1Y_2\big)\\
&\ll (\log B)^2\sum_{v\leq B^{1/3}}\Big(\sum_{\colt{S_i,U_i}{Y_1,Y_2}}
\frac{B^{1/2}U^{1/2}S_1S_2}{vS_3^{1/2}U_3^{1/2}} +
\sum_{S_i,U_i,Y_3} \frac{B (S_1S_2U_1U_2)^{1/2}}{v^2U}\Big).
\end{align*}
Since $U^2\ll B/(v^3Y_1Y_2)$ in  \eqref{eq:2111.6}, and $S_1S_2\ll
S_3^{\delta}$ by \eqref{eq:friday}, we therefore deduce that the
overall contribution from the first inner sum is
\begin{align*}
&\ll (\log B)^2\sum_{v\leq B^{1/3}}\sum_{\colt{S_i,Y_1,Y_2}{U_2,U_3}}
\frac{B^{3/4}S_1S_2}{v^{7/4}S_3^{1/2}U_3^{1/2}Y_1^{1/4}Y_2^{1/4}} \\
&\ll \sum_{\colt{S_2,S_3,Y_1,Y_2}{U_2,U_3}}
\frac{B^{3/4}(\log
  B)^2}{S_3^{1/2-\delta}U_3^{1/2}Y_1^{1/4}Y_2^{1/4}} 
\ll B.
\end{align*}
Turning to the contribution from the second inner sum, we deduce
from a second application of \eqref{eq:friday} that
\begin{align*}
N_0
&\ll B
+(\log B)^2\sum_{S_i,U_i,Y_3} \frac{B (S_1S_2U_1U_2)^{1/2}}{U}\\
&\ll B
+(\log B)^2\sum_{S_2,S_3,U_i,Y_3} \frac{B}{U^{(1-\delta)/2}}
\ll B(\log B)^5.
\end{align*}
Once combined with our earlier work, this 
therefore concludes the proof of Theorem \ref{main-d4}.

\begin{ex}
By mimicking the argument in \cite[\S 5]{d4},
establish the lower bound $N_{U_2}(B) \gg B (\log B)^6$.
\end{ex}

\begin{ack}
The author is grateful to Roger Heath-Brown for a number of useful
conversations relating to the contents of \S \ref{s:HL}, and to Michael
Harvey for spotting several typographical errors in an earlier draft.
\end{ack}

\end{document}